\newcommand{\R}{{\mathbb R}}
\newcommand{\re}{\text{\upshape Re\,}}
\newcommand{\im}{\text{\upshape Im\,}}
\newtheorem{theorem}{Theorem}[section]
\newtheorem*{theorem*}{Theorem}
\newtheorem*{lemma*}{Lemma}
\newtheorem{proposition}[theorem]{Proposition}
\newtheorem{lemma}[theorem]{Lemma}
\theoremstyle{definition}
\newtheorem{definition}[theorem]{Definition}
\newtheorem*{definition*}{Definition}
\newtheorem{remark}[theorem]{Remark}
\newtheorem*{remark*}{Remark}
\newtheorem{RHP}{RH problem}[section]
\newtheorem{Assumption}{Assumption}[section]
\numberwithin{equation}{section}
\newcommand*{\dif}{\mathop{}\!\mathrm{d}}
\title[The defocusing mKdV equation with step-like initial data]
{On the Large-Time Asymptotics of the Defocusing mKdV Equation with Step-like Initial Data}
\author[T.-Y.\@ Xu]{Taiyang Xu$^a$}
\author[Y.-D. \@ Zhang]{Yidan Zhang$^a$}
\address{$^a$School of Mathematical Sciences, Fudan University, 200433 Shanghai, China}
\email{\href{mailto: tyxu19@fudan.edu.cn}{tyxu19@fudan.edu.cn}}
\email{\href{mailto: ydzhang23@m.fudan.edu.cn}{23110840018@m.fudan.edu.cn}}
\begin{document}
\UseRawInputEncoding

\begin{abstract}
We study the Cauchy problem for the defocusing modified Korteweg-de Vries (mKdV) equation with step-like initial data approaching nonzero constants $c_l$ and $c_r$ as $x \to -\infty$ and $x\to+\infty$, respectively.
Assuming $c_l>c_r>0$, the solution exhibits a rarefaction wave structure. We first develop the inverse scattering transform for the solution satisfying these step-like boundary conditions. 
Using the associated scattering data, we prove that there exists a unique global solution of the Cauchy problem and characterize it in terms of a Riemann-Hilbert (RH) problem. 
By applying the nonlinear steepest descent method to this RH problem, we rigorously obtain large-time asymptotics of rarefaction wave solution in three distinct space-time regions, 
each characterized by a different leading order behavior. They are: (I) a left-field region where the solution approaches the left background constant, modulo a small oscillatory correction, 
(II) a central region where the solution exhibits a slowly varying profile that transitions between the two constants, and 
(III) a right-field region where the solution tends to the right background constant, up to an algebraically small correction. 
Rigorous derivations of the leading terms, sub-leading terms as well as the error bounds are presented.
\end{abstract}
\maketitle
\noindent

\noindent
{\small{\sc AMS Subject Classification (2020)}: 35Q55, 41A60, 35Q15.}
\\
{\small{\sc Keywords}: Defocusing mKdV equation, Step-like initial data, RH problem, Rarefaction wave, Large-time asymptotics.}

\setcounter{tocdepth}{1} \tableofcontents


\section{Introduction}
\noindent
We study the Cauchy problem of the defocusing mKdV equation 
\begin{alignat}{2}
&q_t(x,t)-6q^2(x,t)q_{x}(x,t)+q_{xxx}(x,t)=0, &\qquad&x\in\R,\quad t> 0, \label{equ:mkdv} \\
&q(x,0)=q_0(x),&&x\in\R, \label{Initial data}
\end{alignat}
under the assumption that the solution $q(x,t)$ approaches to some nonzero real constants as $x \to \pm \infty$ respectively, 
i.e.,
\begin{equation} \label{boundaryconditions}
q(x,t)\to 
\begin{cases}
c_{l}, & x\to -\infty, \\
c_{r}, & x\to +\infty,
\end{cases}
\end{equation}
where $c_{l}>c_{r}>0$. To ensure compatibility of the boundary conditions \eqref{boundaryconditions} with the evolution
\eqref{equ:mkdv}--\eqref{Initial data}, we require that the initial datum $q_0(x)$ must satisfy
\begin{equation}\label{condition:q0limits}
q_0(x)\to
\begin{cases}
c_{l}, &x\to -\infty,\\
c_{r}, &x\to +\infty.
\end{cases}
\end{equation}
The precise rates of convergence in \eqref{boundaryconditions} and \eqref{condition:q0limits} will be specified in the main theorems.
Since the symmetries $q\rightarrow-q$, $x\rightarrow -x$ as well as $t\rightarrow-t$ keep the form of the defocusing mKdV equation \eqref{equ:mkdv} unchanged, it is sufficient to focus on the scenario where 
$c_l\geqslant \vert c_{r} \vert$ (or $c_r\geqslant \vert c_{l} \vert$). The cases $c_l>c_r\geqslant 0$ and $c_l> 0>c_{r}>-c_l$ lead to quite different asymptotic analysis that a kink soliton region
appears in the later case; see \cite[Figure 1.1]{Wang mkdv}. In the present work, we restrict $c_l>c_{r}>0$ and analyze the large-time dynamics of rarefaction wave solutions. 
For the complementary case $c_r>c_l>0$, the large-time asymptotics of dispersive shock wave solution has been obtained in \cite{Wang mkdv}. 
We also refer the readers to \cite{xzf, Zhang&Xu mkdv} for results on the large-time asymptotics with the symmetric nonzero boundaries 
$\lvert c_l\rvert=\lvert c_r\rvert\neq0$.
    
The defocusing mKdV equation \eqref{equ:mkdv} is a canonical model in mathematical physics, 
describing various nonlinear phenomena such as acoustic waves and phonons in certain 
anharmonic lattices \cite{Zab1967,Ono1992}, and Alfv\'en waves in cold, collisionless 
plasmas \cite{kaku1969,Kha1998}. Early works on the large-time asymptotics of the mKdV equation
\eqref{equ:mkdv} concentrated on initial data vanishing as $x\to\pm\infty$; 
see the works \cite{AblowitzSegur, Zakharov-Manakov-1976} based on the inverse scattering method.
A major advance was made by Deift and Zhou \cite{DZAnn}, who introduced the nonlinear steepest descent 
method for an oscillatory RH problem and used it to obtain detailed large-time asymptotics of the defocusing mKdV equation
with Schwartz class initial data. Such a technique has been widely applied to a variety of integrable PDEs \cite{DIZSurvey}.
We also refer the readers to \cite{CandLdmkdv,lenellsmkdv} for large-time asymptotics of the defocusing mKdV equation with 
vanishing initial data in lower regularity spaces.

The study of large-time asymptotics for Cauchy problems of nonlinear integrable systems with step-like initial data has a long history 
and can be traced back to the pioneering work \cite{GuPitae} of Gurevich and Pitaevskii on the Korteweg-de Vries (KdV) equation. 
Working within the framework of Whitham modulation theory \cite{Whitham1974}, they predicted the emergence of 
highly oscillatory structures, now referred to as dispersive shock waves in the large-time dynamics.
The rigorous mathematical justification of this phenomenon was subsequently done by Khruslov \cite{Khruslov1976} 
via the inverse scattering transform in terms of the Marchenko integral equations and the so-called asymptotic soliton.
Rigorous asymptotic analysis of step-like Cauchy problems for integrable PDEs began with the papers \cite{MonItsKot2009, BuckVena2007}.
Since then, the large-time asymptotics of integrable PDEs with step-like initial data has been extensively 
studied for KdV equation \cite{EgorovaKDVstep-a, EgorovaKDVstep-b}, 
for focusing nonlinear Schr\"odinger equation (NLS) \cite{MonKotSheIMRN,MonLenSheCMP,MonLenSheCMP3},
for focusing mKdV equation \cite{KotMinakovJMP,MinakovJPAMaTheor,KotMinakovJMPAG1,KotMinakovJMPAG2,GraMinakovSIAM}
as well as for some nonlocal integrable PDEs \cite{RDJDE2021,RDCMP,RDJMPAG,xnonlocalmkdv}. 

The aforementioned studies are mostly concerned with Cauchy problems of focusing integrable equations, that is, 
the corresponding Lax operators are non-self-adjoint. Of particular relevance are the recent works \cite{JenkinsNon,LenellsDNLS} on 
the defocusing NLS equation and \cite{Wang mkdv} on the defocusing mKdV equation, whose Lax operators are self-adjoint. 
For instance, in \cite{Wang mkdv} the authors derived the large-time asymptotics for the defocusing mKdV equation with step-like initial data 
of the form
\begin{equation}\label{condition: pure step initial}
q_0(x)=
\begin{cases}
c_{l}, & x<0,\\[4pt]
c_{r}, & x>0,
\end{cases}
\end{equation}
but where $c_r>c_l>0$, corresponding to the dispersive shock wave problem.
In the setting of the {\sl pure} step initial functions as stated 
in \eqref{condition: pure step initial}, the Jost functions and associated scattering data
in the inverse scattering transform could be explicitly constructed, 
which hence greatly simplifies the asymptotic analysis. 
Initial data of this type was also studied in many works, such as 
\cite{MonKotSheIMRN, JenkinsNon, MinakovJPAMaTheor}.
Compared with the dispersive shock regimes studied for KdV, NLS, and mKdV in the above references, the present rarefaction wave regime exhibits a rather different asymptotic profile. In the shock type case, one typically encounters a genuinely oscillatory region bounded by modulation boundaries. However, when $c_l>c_r>0$, the solution is dominated by a slowly varying transition between the two backgrounds.
In the present work, we consider initial data satisfying the boundary conditions
only {\sl asymptotically} as $x\to\pm \infty$. We analyze the large-time asymptotics 
of rarefaction wave solution for $c_l>c_r>0$ (cf. Figure \ref{Fig: RFW}), in contrast to the case of dispersive wave solution for 
$c_r>c_l>0$ (cf. Figure \ref{Fig: DSW}). Indeed, one primary motivation for this work is to understand 
how nonzero boundary conditions imposed only asymptotically 
can be handled for an integrable PDE. Since the boundary limits hold only in the far field ($x\to\pm\infty$), 
one does not have an explicit closed form for the associated scattering
data, particularly the reflection coefficient $r(k)$. Consequently, the analysis 
should find out the necessary properties of $r(k)$ from its definition via the Volterra integral equations 
and then construct suitable analytic approximations developed in \cite{DZAnn} or $\bar\partial$-extensions \cite{Dieng} 
of the scattering data to implement the nonlinear steepest descent analysis.

\begin{figure}[t] 
\centering 
\includegraphics[width=1.0\textwidth]{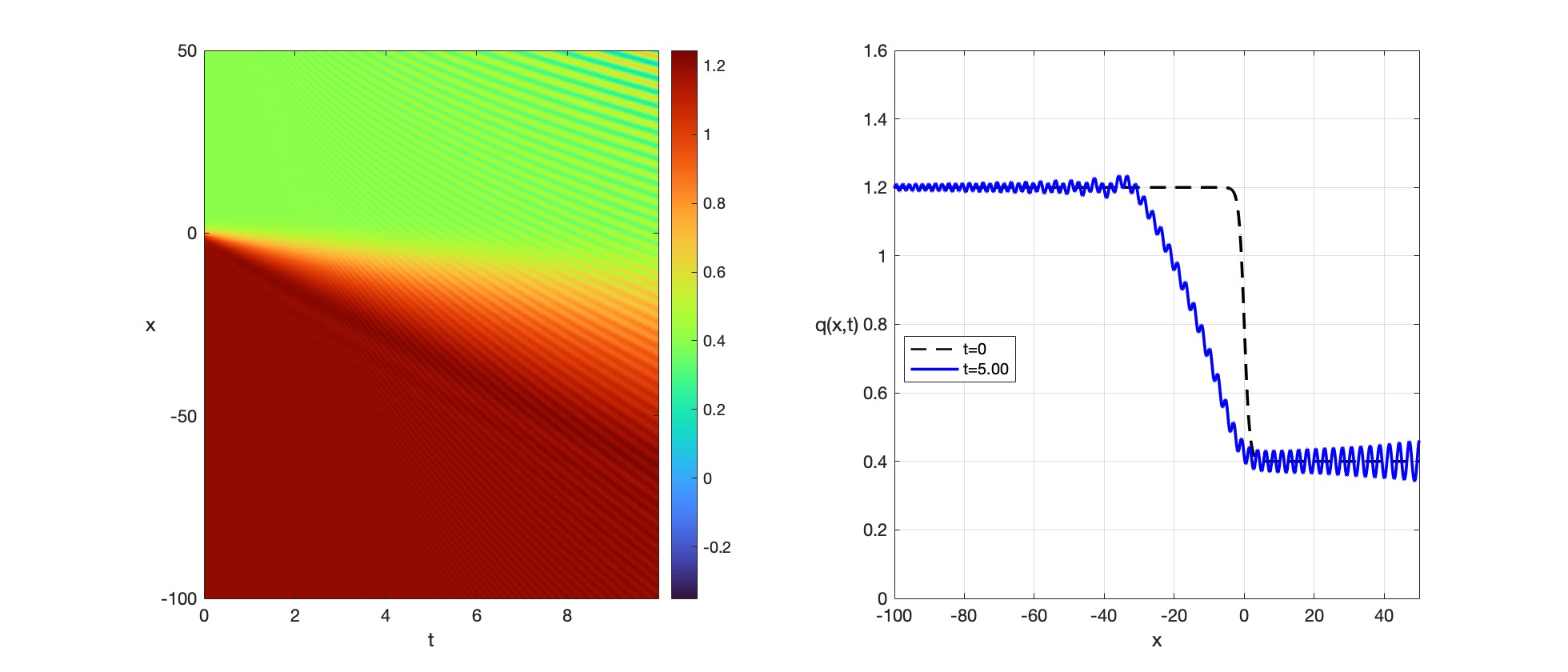} 
\caption{The evolution of $q(x,t)$ for the step-like initial data \eqref{Initial data} with $c_l=1.2 > c_r=0.4$.
(left) Heatmap of $q(x,t)$. (right) The black dashed curve shows the initial profile $q(x,0)$, while the blue curve displays $q(x,t)$ at $t=5$.
The solution develops a rarefaction wave, exhibiting a slowly varied profile between the two constant states.}
\label{Fig: RFW} 
\end{figure}

\begin{figure}[htbp] 
\centering 
\includegraphics[width=1.0\textwidth]{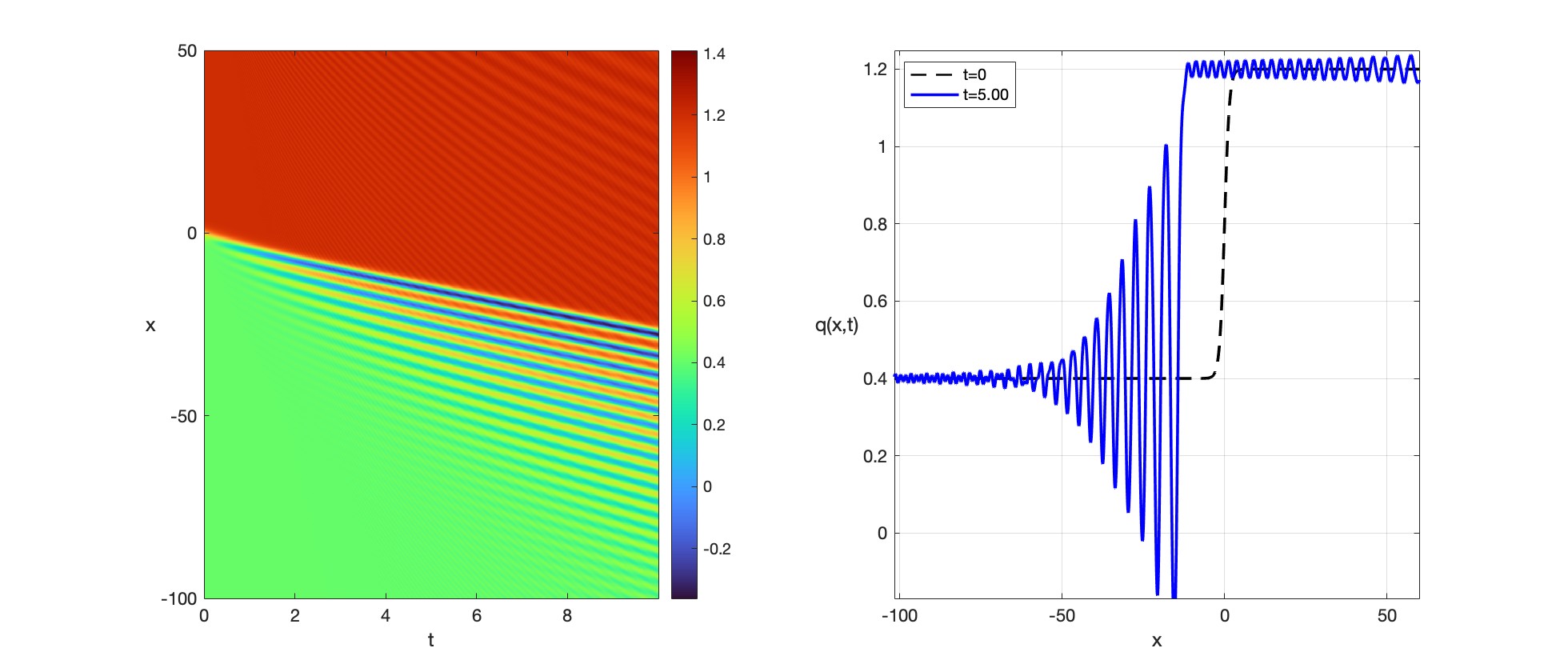} 
\caption{Evolution of $q(x,t)$ for the step-like initial data \eqref{Initial data} with $c_l=0.4 < c_r=1.2$.
(left) Heatmap of $q(x,t)$. (right) The black dashed curve shows the initial profile $q(x,0)$, while the blue curve displays $q(x,t)$ at $t=5$.
The solution develops a dispersive shock wave, featuring a region of rapid oscillations between the two constant states.}
\label{Fig: DSW} 
\end{figure}

\subsection*{Outline of this paper}
Our main theorems are stated in Section \ref{sec: main results}.
In Section \ref{sec:IST}, we present the spectral analysis and inverse scattering transform to 
characterize the solution of the Cauchy problem \eqref{equ:mkdv}--\eqref{Initial data} in terms of an RH problem.
Sections \ref{sec:asymptotic analysis in RI}--\ref{sec:asymptotic analysis in RIV} are devoted to the asymptotic analysis of the obtained 
RH problem in different space-time regions, from which the main results stated in Theorem \ref{thm:mainthm} below
are proved.  

\subsection*{Notations}
Throughout this paper, we adopt the following notations.
\begin{itemize}
    \item As usual, the classical Pauli matrices $\{\sigma_j\}_{j=1,2,3}$ are defined by
	\begin{equation*}
		\sigma_1:=\begin{pmatrix}0 & 1 \\ 1 & 0\end{pmatrix}, \quad
		\sigma_2:=\begin{pmatrix}0 & -i \\ i & 0\end{pmatrix}, \quad
		\sigma_3:=\begin{pmatrix}1 & 0 \\ 0 & -1\end{pmatrix}.
	\end{equation*}
    For a $2\times 2$ matrix $A$, we also define
	$$e^{\hat{\sigma}_j}A:=e^{\sigma_j}Ae^{-\sigma_j}, \quad j=1,2,3.$$ 
	\item For a complex-valued function $f(z)$, we use $f^{*}(z):=\overline{f(\bar{z})}$ for $z\in\mathbb{C}$ to denote its Schwartz conjugation.
    \item For a region $U\subseteq \mathbb{C}$, we use $U^*$ to denote the conjugated region of $U$.
    \item For $1 \leqslant p < \infty$, the $L^p$-space is defined as:
                \begin{equation*}
                L^p(\mathbb{R}) = \left\{ f: \mathbb{R} \to \mathbb{C} \ \middle| \ f \text{ is measurable and } \|f\|_p < \infty \right\},
                \end{equation*}
                where the $L^p$-norm is given by $\|f\|_p = \left( \int_{\mathbb{R}} |f(x)|^p  \dif x \right)^{1/p}$.
                For $p = \infty$, the $L^\infty$-space is defined by
                \begin{equation*}
                L^\infty(\mathbb{R}) = \left\{ f: \mathbb{R} \to \mathbb{C} \ \middle| \ f \text{ is measurable and } \|f\|_\infty < \infty \right\},
                \end{equation*}
                where the essential supremum norm is given by $\|f\|_\infty = \inf \left\{ M \geqslant 0 \ | \ |f(x)| \leqslant M \text{ a.e.} \right\}.$
                The $\mathcal{C}^{N}(\Omega)$, $N=1,2,\dots,\infty$ is defined as the space of $N$-times continuously differentiable functions on $\Omega$.
    \item For all smooth oriented curve $\Sigma$, the Cauchy operator $C$ on $\Sigma$ is defined  by
	\begin{align*}
		Cf(z)=\frac{1}{2\pi i}\int_{\Sigma}\frac{f(\zeta)}{\zeta-z}\dif\zeta, \qquad  z\in\mathbb{C}\setminus \Sigma.
	\end{align*}
	Given a function $f \in L^p(\Sigma)$, $1\leqslant p<\infty$,
	\begin{align*}
		C_\pm f(z):=\lim_{z'\to z\in\Sigma}\frac{1}{2\pi i}\int_{\Sigma}\frac{f(\zeta)}{\zeta-z'}\dif\zeta
	\end{align*}
   stands for the positive / negative (according to the orientation of $\Sigma$) non-tangential boundary value of $Cf$.
   It is also used to adopting $f_{\pm}(z)$ to represent the non-tangential limits from the positive / negative side respectively, 
   i.e., $f_{\pm}(z)=\lim_{{\rm positive/negative \ side}\ni \zeta\to z}f(\zeta)$. 
   \item If $A$ is a matrix, then $(A)_{ij}$ stands for its $(i,j)$-th entry, and $[A]_j$ represents the $j$-th column. We use $A^{\rm H}$ to denote its conjugate transpose, which means $A^{\rm H}=\bar A^{\rm T}$.
   \item We use $\mathbb{C}^+:=\{k| \im k>0\}$ and $\mathbb{C}^-:=\{k| \im k<0\}$ to represent the  upper and lower half plane respectively. Similarly, we introduce $\mathbb{R}^+:=(0,+\infty)$ and $\mathbb{R}^-:=(-\infty,0)$.
   \item We use the notation $a\lesssim b$ (resp.\ $a \gtrsim b$) to indicate that $a\leqslant Cb$ (resp.\ $a\geqslant Cb$) for some generic positive constant $C$.
   \item The matrix-valued functions arising from RH transformations, such as $M^{(\infty)}$ and $M^{(err)}$, are defined locally in the subsection where they are used. 
\end{itemize}
For the reader's convenience, we summarize the principal spectral functions, phase functions, and contour notations used in the RH deformation throughout the paper; see Table \ref{tab:notation-summary}.
\begin{table}[htbp]
\centering
\renewcommand{\arraystretch}{1.15}
\small
\begin{tabular}{@{}p{0.19\textwidth}p{0.56\textwidth}p{0.19\textwidth}@{}}
\hline
Notation & Role in the analysis & First definition / illustration \\
\hline
$a(k), b(k), r(k)$ & Scattering coefficients and reflection coefficient; they determine the jump matrix of the basic RH problem. & \eqref{def:S(k)}--\eqref{equ:jump V(k)} \\
$X_j(k), \Delta_j(k), \chi_j(k)$ & Background spectral quantities associated with the branch cuts at $\pm c_j$ and the model factors in the Jost solutions. & \eqref{def:X_j(k)}--\eqref{def:chi} \\
$\theta(\xi;k)$ & Original oscillatory phase in the RH problem for $M$. & Below \eqref{eq: def of M(x,t;k)} \\
$r_2(k),\; r_a,\; r_{2,a},\; r_r$ & Auxiliary reflection data used in jump factorizations and analytic approximations during contour deformation. & \eqref{factor11}--\eqref{factor21} and the corresponding approximation propositions \\
$g_{\textup{\uppercase\expandafter{\romannumeral1}}},\; g_{\textup{\uppercase\expandafter{\romannumeral2}}},\; g_{\textup{\uppercase\expandafter{\romannumeral3}}}$ & Deformed phase functions introduced in Regions I--III to describe the sign distribution of the phase and guide the contour deformation. & \eqref{equ:g function RI}, \eqref{equ:g function RII}, \eqref{equ:g function RIII} \\
$D_{\textup{\uppercase\expandafter{\romannumeral1}}},\; D_{\textup{\uppercase\expandafter{\romannumeral2}}}$ & Szeg\H{o} type functions used in Region I and II, respectively. & \eqref{equ:def D function RI}, \eqref{equ:def D function RII} \\
$\Gamma^{(3)}$ & Jump contour after opening lenses; its precise geometry depends on the space-time region under consideration. & Figures \ref{fig:U_j domain RI} and \ref{fig:U_j domain RII} \\
$\Gamma^{(r)},\; \Gamma^{(l)}$ & Local contours near the stationary points, used in the construction of local parametrices. & \eqref{def:Gamma^(ell) RI} and its Region II analogue \\
$\Gamma^{(err)}$ & Contour for the final small-norm RH problem. & Figures \ref{fig:jump contour Merr RI} and \ref{fig:jump contour Merr RII} \\
$\Gamma_1,\; \Gamma_1^*$ & Parallel contours used in the deformation for $\mathcal{R}_{\textup{\uppercase\expandafter{\romannumeral3}}_b}$. & Figure \ref{fig:deformtosaddle} \\
\hline
\end{tabular}\caption{Summary of the principal spectral functions and contour notations used in the RH deformation and phase analysis.}
\label{tab:notation-summary}
\end{table}

\section{Main results}\label{sec: main results}
Besides the boundary condition \eqref{condition:q0limits}, we assume that the initial
data $q_0(x)$ satisfies the following conditions.
\begin{Assumption}\label{assumption on q_0}
    \hfill
   \begin{itemize}
       \item [\rm(a)] $x^m(q_0-c_l)|_{\mathbb{R}^-}\in L^1(\mathbb{R}^-)$, $x^m\left(q_0-c_r\right)|_{\mathbb{R}^+}\in L^1(\mathbb{R}^+)$, $m=0,\dots,10$.
       \item [\rm(b)] $q_0\in\mathcal{C}^4(\mathbb{R})$, $\partial_x^n q_0\in L^\infty(\mathbb{R})$, $n=0,\dots,4$.
   \end{itemize}
\end{Assumption}

As a preliminary step, the first theorem establishes the existence of a global solution to the Cauchy problem \eqref{equ:mkdv}--\eqref{Initial data} under the Assumption \ref{assumption on q_0}. 
To present this result, we firstly introduce the following definition that comes from \cite{lenellsmkdvfourier} with a minor modification.
\begin{definition}[\cite{lenellsmkdvfourier}]\label{def: global solution}
Suppose $q_0: \mathbb{R} \rightarrow \mathbb{R}$ satisfies Assumption \ref{assumption on q_0}. 
We say that a function $q: \mathbb{R} \times[0, \infty) \rightarrow \mathbb{R}$ is a global solution of the 
Cauchy problem \eqref{equ:mkdv}--\eqref{Initial data} with initial data $q_0$ if the following conditions hold:
\begin{itemize}
 \item[(a)] $q: \mathbb{R}\times[0,+\infty)$ is $\mathcal{C}^3$ in $x$ and $\mathcal{C}^1$ in $t$.
 \item[(b)] $q$ satisfies the defocusing mKdV equation \eqref{equ:mkdv} for all $(x, t) \in \mathbb{R} \times[0, \infty)$.
 \item[(c)] $q(x, 0)=q_0(x)$ for all $x \in \mathbb{R}$.
 \item[(d)] For each $t \in[0,+\infty)$, $q(x,t)$ satisfies the following step-like boundary conditions
  $$
    q(x, t)= \begin{cases}c_l+o(1), & x \rightarrow-\infty, \\ c_r+o(1), & x \rightarrow+\infty .\end{cases}
  $$
\end{itemize}
\end{definition}

With the above definition, we have the following existence result.
\begin{theorem}\label{thm: global solution existence}
    Suppose $q_0$ satisfies the Assumption \textup{\ref{assumption on q_0}}. Then the Cauchy problem \eqref{equ:mkdv}--\eqref{Initial data}
    with initial data $q_0$ has a unique global solution $q(x,t)$ shown in the Definition \textup{\ref{def: global solution}}, which could be expressed in terms of the solution of 
    the formulated RH problem \textup{\ref{RHP:basic RHP}} via the reconstruction formula \eqref{equ:recovering formula}.
\end{theorem}
\begin{proof}
    The proof of this theorem follows a similar approach to that in \cite{lenellsmkdvfourier} for the defocusing mKdV equation in the quarter plane. 
    However, the main difference is to ensure that $q(x,t)$ satisfy the step-like initial data instead; 
    a sketch of the proof is provided in Section \ref{sec:IST}.
\end{proof}

We next give the precise definition of the different asymptotic regions in the $(x,t)$-half plane.
\begin{definition}\label{def:divide regions R_I-R_IV}
    For any constants $c_l$ and $c_r$ satisfying $c_l>c_{r}>0$, 
    we define
    \begin{itemize}
        \item Left field: $\mathcal{R}_{\textup{\uppercase\expandafter{\romannumeral1}}}:=\left\{(x,t)| \xi<-\frac{c_{l}^2}{2}\right\}$.
        \item Slowly varied region: $\mathcal{R}_{\textup{\uppercase\expandafter{\romannumeral2}}}:=\left\{(x,t)|-\frac{c^2_l}{2}<\xi<-\frac{c_{r}^2}{2}\right\}$.
        \item Right field: $\mathcal{R}_{\textup{\uppercase\expandafter{\romannumeral3}}}:=\mathcal{R}_{\textup{\uppercase\expandafter{\romannumeral3}}_a}\cup\mathcal{R}_{\textup{\uppercase\expandafter{\romannumeral3}}_b}$,
        where
        $$
            \mathcal{R}_{\textup{\uppercase\expandafter{\romannumeral3}}_{a}}:=\left\{(x,t)|-\frac{c_{r}^2}{2}<\xi<-\frac{c_r^2}{6} \right\}
            \quad {\rm and} \quad \mathcal{R}_{\textup{\uppercase\expandafter{\romannumeral3}}_{b}}:=\left\{(x,t)|\xi>-\frac{c_{r}^2}{6}\right\}.
        $$
    \end{itemize}
Here $\xi=x/(12t)$; see Figure \ref{fig:cone} for an illustration.
\end{definition}

\begin{figure}[H]
    \begin{center}
    \begin{tikzpicture}[node distance=2cm]
    \draw[yellow!20, fill=yellow!20] (0,0)--(-5,2)--(-5,0)--(0, 0);
    \draw[green!20, fill=blue!20] (0,0 )--(-4,4)--(-5,4)--(-5,2)--(0,0);
    \draw[blue!20, fill=green!20] (0,0 )--(4,4)--(-4,4);
    \draw[green!20, fill=green!20] (0,0 )--(4,4)--(4,0)--(0,0);
    \draw[->](-5.5,0)--(5.5,0)node[right]{$x$};
    \draw[->](0,0)--(0,4)node[above]{$t$};
    \draw[red, dashed](0,0)--(-5,2)node[above,black]{$\xi=-\frac{c_{l}^2}{2}$};
    \draw[red, dashed](0,0)--(-4,4)node[above,black]{$\xi=-\frac{c_{r}^2}{2}$};
    \draw[red, dashed](0,0)--(-1,4)node[above,black]{$\xi=-\frac{c_{r}^2}{6}$};
    \node[below]{$0$};
    \coordinate (A) at (-4.5, 0.8);
	\fill (A) node[right] {$\mathcal{R}_{\textup{\uppercase\expandafter{\romannumeral1}}}$};
    \coordinate (B) at (-3.5, 2);
	\fill (B) node[right] {$\mathcal{R}_{\textup{\uppercase\expandafter{\romannumeral2}}}$};
    \coordinate (C) at (-2, 2.5);
	\fill (C) node[right] {$\mathcal{R}_{\textup{\uppercase\expandafter{\romannumeral3}}_a}$};
    \coordinate (D) at (2, 2);
	\fill (D) node[left] {$\mathcal{R}_{\textup{\uppercase\expandafter{\romannumeral3}}_b}$};
    \end{tikzpicture}
    \caption{The different asymptotic regions of the $(x,t)$-half plane.} \label{fig:cone}
    \end{center}
\end{figure}

Physically, the three asymptotic regions reflect how the step-like data propagate at different speeds, moving from the left background, through a rarefaction wave, to the right background.
Large-time asymptotics of the solution $q(x,t)$ in each of the regions given in Definition \ref{def:divide regions R_I-R_IV} 
are main results of the present work.
\begin{theorem}\label{thm:mainthm}
    Let $q(x,t)$ be the global solution of the Cauchy problem \eqref{equ:mkdv}--\eqref{Initial data}
    for the defocusing mKdV equation over the real line under Assumption \textup{\ref{assumption on q_0}}, and denote
    by $r(k)$ the reflection coefficient. As $t\rightarrow+\infty$, we have the following asymptotics of $q(x,t)$ 
    in the regions $\mathcal{R}_\textup{\uppercase\expandafter{\romannumeral1}}-\mathcal{R}_{\textup{\uppercase\expandafter{\romannumeral3}}}$ 
    given in Definition \textup{\ref{def:divide regions R_I-R_IV}}.
    \begin{itemize}
    \item[{\rm (I)}] For $\xi\in\mathcal{R}_{\textup{\uppercase\expandafter{\romannumeral1}}}$, we have
    \begin{align}\label{asy formula:RI}        
        q(x,t)=c_l+t^{-1/2}f_{\textup{\uppercase\expandafter{\romannumeral1}}}(\xi)+\mathcal{O}(t^{-1}\log t),
    \end{align}
    where
    \begin{align*}
        f_\textup{\uppercase\expandafter{\romannumeral1}}(\xi)=\frac{\nu(\eta_l)^{1/2}\left(-\xi-\frac{c_l^2}{2}\right)^{1/4}}{\left[3(-\xi+\frac{c_l^2}{2})\right]^{1/2}}\cos\left[16t\left(-\xi-\frac{c_l^2}{2}\right)^{3/2}-\nu(\eta_l)\log \left(\frac{48t\left(-\xi+\frac{c_l^2}{2}\right)}{\left(-\xi-\frac{c_l^2}{2}\right)^{1/2}}\right)+\Theta(\xi)\right].
    \end{align*}
    Here,
\begin{align*}
    &\nu(\eta_l)=-\frac{1}{2\pi}\log(1-|r(\eta_l)|^2),\quad \eta_l(\xi)=\sqrt{-\xi+\frac{c_l^2}{2}},\\
    &\Theta(\xi)=\frac{\pi}{4}-\arg r(\eta_l)-\arg \Gamma(-i\nu(\eta_l))+2d_{\textup{I},\eta},\quad X_{l}(k)=\sqrt{k^2-c_l^2},\\
    &d_{\textup{I},\eta}=-\frac{(\eta_l^2-c_l^2)^{\frac{1}{2}}}{2\pi }\left[\left(\int_{-c_l}^{-c_{r}}+\int_{c_{r}}^{c_l}\right)\frac{\log r_{+}(s)}{X_{l+}(s)(s-\eta_l)}\dif s+\int_{-\eta_l}^{- c_l}\frac{\log(1-r(s)r^{*}(s))}{X_{l}(s)(s- \eta_l)}\dif s\right]-\nu(\eta_l)\log (\eta_l-c_l).
\end{align*}

    \item[{\rm (II)}] For $\xi\in\mathcal{R}_{\textup{\uppercase\expandafter{\romannumeral2}}}$, we have
    \begin{align}\label{asy formula:RII}
        q(x,t)=\sqrt{-2\xi}+t^{-1}f_{\textup{\uppercase\expandafter{\romannumeral2}}}(\xi)+\mathcal{O}(t^{-2}),
    \end{align}
    where
    \begin{equation*}
    f_{\textup{\uppercase\expandafter{\romannumeral2}}}(\xi)=\frac{\left[7+5(2\eta)^{\frac{1}{2}}\right]\hat C_\eta}{288\eta^2},
    \end{equation*}
    with 
    \begin{align*}
         &\eta(\xi)=\sqrt{-2\xi},\\
         &\hat C_\eta=-\frac{\sqrt{2\eta}}{2\pi }\left[\int_{c_r}^\eta\frac{\frac{\arg r_+(s)}{\sqrt{\eta+s}}-\frac{\arg r_+(\eta)}{\sqrt{2\eta}}}{(s-\eta)\sqrt{\eta-s}}\dif s+\frac{2\arg r_+(\eta)}{\sqrt{2\eta(\eta-c_r)}}-\int_{-\eta}^{-c_r}\frac{\arg r_+(s)}{\sqrt{\eta^2-s^2}(s-\eta)}\dif s\right].
    \end{align*}

    \item[\text{(III\textsubscript{a})}] For $\xi\in\mathcal{R}_{{\textup{\uppercase\expandafter{\romannumeral3}}}_a}$, we have
    \begin{align}\label{asy formula:RIII} q(x,t)=c_{r}+t^{-2}f_{\textup{\uppercase\expandafter{\romannumeral3}}}(\xi)+\mathcal{O}(t^{-3}),
    \end{align}
    where
    \begin{align*}      &f_{\textup{\uppercase\expandafter{\romannumeral3}}}(\xi)=-\frac{(\mu-1)(\mu-9)}{4608c_r(c_r^2+2\xi)^2},
    \end{align*}
    with $\mu={4\left[\arg r(c_r)\right]^2}/{\pi^2}$.

    \item[\text{(III\textsubscript{b})}] For $\xi\in\mathcal{R}_{{\textup{\uppercase\expandafter{\romannumeral3}}}_b}$, we have
    \begin{align}\label{asy formula:RIV}
        q(x,t)=c_{r}+\mathcal{O}(t^{-2}).
    \end{align}
\end{itemize}
\end{theorem}
\begin{proof}
    See sections \ref{sec:asymptotic analysis in RI}--\ref{sec:asymptotic analysis in RIV} for the detailed proofs of the asymptotic formulas \eqref{asy formula:RI}--\eqref{asy formula:RIV} in each of the regions $\mathcal{R}_{\textup{\uppercase\expandafter{\romannumeral1}}}$, $\mathcal{R}_{\textup{\uppercase\expandafter{\romannumeral2}}}$, $\mathcal{R}_{\textup{\uppercase\expandafter{\romannumeral3}}_a}$ and $\mathcal{R}_{\textup{\uppercase\expandafter{\romannumeral3}}_{b}}$ respectively.
\end{proof}
We clarify that no soliton structures emerge under the considered step-like initial data satisfying Assumption \ref{assumption on q_0}; see part (b) of Proposition \ref{prop:a,b,r} below.
In the left-most region $\mathcal{R}_{\textup{\uppercase\expandafter{\romannumeral1}}}$, the leading term in the formula \eqref{asy formula:RI} is given by the constant $c_l$, which is compatible with the boundary condition \eqref{boundaryconditions} for $x<0$. 
The asymptotic solution in $\mathcal{R}_{\textup{\uppercase\expandafter{\romannumeral1}}}$ admits a sub-leading term of the form $t^{-1/2}f_{\textup{\uppercase\expandafter{\romannumeral1}}}$, constructed via a parabolic cylinder parametrix. 
For $\xi\in\mathcal{R}_{\textup{\uppercase\expandafter{\romannumeral2}}}$, a slowly varying region emerges, bridging the left and right fields. Here, the leading term is given by the slowly varying factor $\sqrt{-\frac{x}{6t}}$, while the sub-leading term $t^{-1}f_{\textup{\uppercase\expandafter{\romannumeral2}}}$ is derived from an Airy parametrix. 
As expected, when $\xi$ approaches $-c_l^2/2$, the leading term of \eqref{asy formula:RII} matches the leading asymptotics in \eqref{asy formula:RI}; similarly, when $\xi$ approaches $-c_{r}^2/2$, the leading term of \eqref{asy formula:RII} matches that of \eqref{asy formula:RIII}. 
The asymptotics in the regions $\mathcal{R}_{\textup{\uppercase\expandafter{\romannumeral3}}_a}$ and $\mathcal{R}_{\textup{\uppercase\expandafter{\romannumeral3}}_b}$ shares the same leading term. However, their sub-leading terms behave differently: in region $\mathcal{R}_{\textup{\uppercase\expandafter{\romannumeral3}}_a}$, the sub-leading term $t^{-2}f_{\textup{\uppercase\expandafter{\romannumeral3}}}$ arises from a Bessel parametrix, whereas in region $\mathcal{R}_{\textup{\uppercase\expandafter{\romannumeral3}}_b}$, the error term $\mathcal{O}(t^{-2})$ 
is from the non-analytic part of the reflection coefficient on the real line.
We also note that the asymptotic formulas given above do not apply to the critical lines $\xi=-c_l^2/2$ and $\xi=-c_r^2/2$, 
where the transient asymptotics are recently established in \cite{FanWangZhang2026}.

\section{Spectral analysis and inverse scattering transform}\label{sec:IST}
\subsection{Direct scattering transform}
As the member of AKNS hierarchy \cite{AKNS}, the Lax pair of
the defocusing mKdV equation \eqref{equ:mkdv} is given by
\begin{align}\label{equ:lax pair}
    \left\{
        \begin{aligned}
        &\Phi_x+ik\sigma_3\Phi=Q\Phi, \\
        &\Phi_t+4ik^3\sigma_3\Phi=V\Phi,
        \end{aligned}
    \right.
\end{align}
where $\Phi(x,t;k)$ is a $2\times 2$ matrix-valued function with
the spectral parameter $k\in\mathbb{C}$. Here, $Q(x,t)$ and $V(x,t)$
are some matrices associated with the potential function $q(x,t)$, defined by
\begin{align}
    &Q(x,t)=\begin{pmatrix}0 & q(x,t) \\ q(x,t) & 0\end{pmatrix}, \label{equ:Q(x,t)}\\
    &V(x,t)=4k^2Q(x,t)+2ik\sigma_3\left(Q_x\left(x,t\right)-Q^2\left(x,t\right)\right)+2Q^3(x,t)-Q_{xx}(x,t).\label{equ:V(x,t)}
\end{align}
The compatible condition $\Phi_{xt} = \Phi_{tx}$ of the Lax pair \eqref{equ:lax pair} is equivalent to the mKdV equation \eqref{equ:mkdv}.

For $j\in\{l,r\}$, substituting $q(x,t)=c_{j}$ into \eqref{equ:lax pair}, we obtain
the corresponding ``background'' Lax pair
\begin{equation}\label{laxpair:background}
     \left\{
        \begin{aligned}   &\Phi^b_{j,x}+ik\sigma_3\Phi^b_{j}=Q^b_j\Phi^b_{j}, \\
        &\Phi^b_{j,t}+4ik^3\sigma_3\Phi^b_{j}=V^b_j\Phi^b_{j},
        \end{aligned}
    \right.
\end{equation}
    where
    \begin{equation}\label{equ:Q_j^b,V_j^b}
        Q^b_j=\begin{pmatrix}0 & c_{j} \\ c_j & 0\end{pmatrix},\quad V^b_j=\begin{pmatrix}
            -2ic_j^2k& 4c_j k^2+2c_j^3\\4c_j k^2+2c_j^3&2ic_j^2k
        \end{pmatrix}.
    \end{equation}
The explicit solutions to the ``background'' Lax pair \eqref{laxpair:background} are given by
\begin{equation}\label{equ:Phi_j^p}
    \Phi_{j}^{b}(x,t;k)=\Delta_{j}(k)e^{-i(X_{j}(k)x+\Omega_{j}(k)t)\sigma_3},\quad j\in\{l,r\},
\end{equation}
where
\begin{align}\label{def:X_j(k)}
    X_{j}(k)=\sqrt{k^2-c_{j}^2}, \quad \Omega_{j}(k)=2(2k^2+c_{j}^2)X_{j}(k),\quad j\in\{l,r\},
\end{align}
and
\begin{equation}\label{equ:Delta_j}
    \Delta_{j}(k)=\frac{1}{2}
    \left(
        \begin{array}{cc}
        \chi_{j}(k)+\chi_{j}^{-1}(k) & i\left(\chi_{j}(k)-\chi_{j}^{-1}(k)\right)\\
        -i\left(\chi_{j}(k)-\chi_{j}^{-1}(k)\right) & \chi_{j}(k)+\chi_{j}^{-1}(k)
        \end{array}
    \right), \quad j\in\{l,r\}.
\end{equation}
Here, the function $\chi_j$ is defined by 
\begin{equation}\label{def:chi}
    \chi_{j}: \mathbb{C}\backslash[-c_{j},c_{j}]\rightarrow\mathbb{C}, \quad \chi_{j}(k)=\left(\frac{k-c_{j}}{k+c_{j}}\right)^{\frac{1}{4}}, \quad j\in\{l,r\},
\end{equation}
with the branch cut being chosen such that $\chi_j(k)=1+\mathcal{O}(k^{-1})$ as $k\rightarrow\infty$.

For $j\in\{l,r\}$, we denote $\Phi^{b}_j(x;k):=\Phi^{b}_{j}(x,0;k),\;Q_0(x):=Q(x,0)$, where $\Phi^{b}_{j}(x,0;k)$ is
defined in \eqref{equ:Phi_j^p} for $t=0$. To proceed, we consider the Lax pair \eqref{equ:lax pair}
for $t=0$ and define the Jost functions $\Phi_j(x;k):=\Phi_j(x,0;k)$, which satisfy the $x$-part of \eqref{equ:lax pair} and admit the following asymptotic conditions
\begin{align}
    &\Phi_{l}(x;k)=\Phi_{l}^{b}(x;k)\left(I+o(1)\right), \quad x\rightarrow-\infty, \quad k\in\mathbb{R}\setminus\{-c_l,c_l\},\label{equ:jostfunc-l}\\
    &\Phi_{r}(x;k)=\Phi_{r}^{b}(x;k)\left(I+o(1)\right), \quad x\rightarrow+\infty, \quad k\in\mathbb{R}\setminus\{-c_r,c_r\}.\label{equ:jostfunc-r}
\end{align}
By Lax pairs \eqref{equ:lax pair} and \eqref{laxpair:background}, for $j\in\{l,r\}$, the Jost functions $\Phi_j(x;k)$ are defined via the Volterra integral equations
\begin{align}\label{equ:VolterraforPhi}
    \Phi_{j}(x;k)=\Phi^{b}_{j}(x;k)+\int_{\infty_j}^{x}\Phi^b_j(x;k)(\Phi^b_j)^{-1}(y;k)\left[Q_0(y)-Q^b_j\right]\Phi_{j}(y;k)\dif y,
\end{align}
where $\infty_r:=+\infty$, $\infty_l:=-\infty$ and $Q^b_j$ is defined in \eqref{equ:Q_j^b,V_j^b}.

Some basic properties of the Jost functions $\Phi_{l}$ and $\Phi_{r}$ are summarized in the following proposition,
which can be established in a standard manner by analyzing the integral equation \eqref{equ:VolterraforPhi} and 
the spatial part of Lax pair \eqref{equ:lax pair}.
\begin{proposition}\label{prop: properties of Phi(r,j)}
    Under the Assumption \textup{\ref{assumption on q_0}} on the initial data, the Jost functions $\Phi_{l}$ and $\Phi_{r}$ defined by \eqref{equ:VolterraforPhi} have the following properties for $j\in\{l,r\}$:
    \begin{itemize}
        \item[(a)] For each $x\in\mathbb{R}$, we have\\
        $[\Phi_r]_1(x; k)$ is holomorphic for $k\in\mathbb{C}^{-}$ and has continuous extension to $\overline{\mathbb{C}^{-}}\backslash\{-c_{r}, c_{r}\}$,\\
        $[\Phi_r]_2(x; k)$ is holomorphic for $k\in\mathbb{C}^{+}$ and has continuous extension to $\overline{\mathbb{C}^{+}}\backslash\{-c_{r}, c_{r}\}$,\\
        $[\Phi_l]_1(x; k)$ is holomorphic for $k\in\mathbb{C}^{+}$ and has continuous extension to $\overline{\mathbb{C}^{+}}\backslash\{-c_{l}, c_{l}\}$,\\
        $[\Phi_l]_2(x; k)$ is holomorphic for $k\in\mathbb{C}^{-}$ and has continuous extension to $\overline{\mathbb{C}^{-}}\backslash\{-c_{l}, c_{l}\}$.
        \item[(b)] $\det\Phi_j(x;k)=1$, \quad$k\in\mathbb{R}\setminus[-c_j,c_j]$.
        \item[(c)] As $k\rightarrow\infty$, we have 
        \begin{align*}
                \left([\Phi_l(x;k)]_1, [\Phi_r(x;k)]_2\right)e^{ikx\sigma_3}=I+\mathcal{O}(k^{-1}), \quad k\in \mathbb{C}^{+},\\
                \left([\Phi_r(x;k)]_1, [\Phi_l(x;k)]_2\right)e^{ikx\sigma_3}=I+\mathcal{O}(k^{-1}), \quad k\in \mathbb{C}^{-}.
        \end{align*}
        \item[(d)] $\Phi_j(x;k)$ admits the following symmetries
        \begin{equation*}
            \sigma_1\overline{\Phi_{j}(x;\overline{k})}\sigma_1=\Phi_{j}(x;k), \quad
            [\Phi_j]_1(x;k)=\sigma_1[\Phi_j]_2(x;-k),\quad
            \overline{\Phi_j(x;-\bar{k})}=\Phi_j(x;k).
        \end{equation*}
        \item[(e)]For $k\in (-c_j, c_j)$, we have
        \begin{equation*}
           [\Phi_j]_1 (x;k)= [\Phi_j]_2 (x;k).
        \end{equation*}
    \end{itemize}
\end{proposition}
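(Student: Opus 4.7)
The plan is to derive all six properties from the Volterra integral representation \eqref{equ:VolterraforPhi}, combined with the explicit algebraic structure of the background solution $\Phi_j^p$ and the two-sheeted structure of $X_j(k)=\sqrt{k^2-c_j^2}$ and $\chi_j(k)$. The guiding observation is that, with the branch normalization $X_j(k)\sim k$ at infinity forced by $\chi_j(k)=1+\mathcal{O}(k^{-1})$, one has $\im X_j(k)>0$ for $k\in\mathbb{C}^+$, so that the exponential factor $e^{-iX_j(k)(y-x)}$ appearing in the Volterra iteration decays in a definite $y$-direction. This single fact organizes the half-plane analyticity statements in part (a).

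For part (a), I would split \eqref{equ:VolterraforPhi} column by column, isolating the diagonal exponentials $e^{\mp iX_j(k)x}$ inside $\Phi_j^p$. For instance, the first column of $\Phi_r$ involves $e^{-iX_r(k)(y-x)}$ with $y\geq x$, which is bounded precisely when $\im X_r(k)\leq 0$, i.e.\ for $k\in\overline{\mathbb{C}^-}$; the decay assumption $\int_{\mathbb{R}^+}e^{2\sigma x}|q_0(x)-c_r|<\infty$ from Assumption \ref{assumption:q0}(b) then makes the Neumann series for \eqref{equ:VolterraforPhi} converge uniformly on compact subsets of $\overline{\mathbb{C}^-}\setminus\{\pm c_r\}$, each term being holomorphic in the interior. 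The three remaining columns are handled by sign-adjusted versions of the same argument. Part (f) then follows immediately from the $(k\mp c_j)^{\mp 1/4}$ singularities built into $\chi_j(k)$ inside $\Delta_j(k)$; these survive the Volterra iteration because the kernel $K_j^p(x,y)$ is $k$-independent.

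The algebraic statements (b)--(d) follow by standard arguments. For (b), the coefficient $-ik\sigma_3+Q$ of the spatial part of the Lax pair is traceless, so Abel's identity forces $\det\Phi_j(x;k)$ to be $x$-independent; the value $1$ is then read off from \eqref{equ:jostfunc-l}--\eqref{equ:jostfunc-r} together with $\det\Phi_j^p\equiv 1$, which in turn follows from $\det\Delta_j=1$. For (c), one iterates the Volterra equation once more and uses the decay of $K_j^p$ to peel off the leading exponential factor, leaving an $\mathcal{O}(k^{-1})$ remainder in the corresponding half-plane. The three symmetries in (d) are inherited from the symmetries of the matrix $Q$ (reality combined with $\sigma_1 Q\sigma_1=Q$) and of $X_j(k)$ (invariance under $k\mapsto\bar k$ and oddness under $k\mapsto -k$ up to branch choice); uniqueness of the Volterra solution then propagates these symmetries from $\Phi_j^p$ to $\Phi_j$.

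Part (e) is the main obstacle. One must compute the boundary values $\chi_{j\pm}(k)$ across the cut $(-c_j,c_j)$ directly from the fourth-root definition \eqref{def:chi}, substitute into \eqref{equ:Delta_j} to obtain the desired right-multiplicative jump $\Delta_{j+}(k)=\Delta_{j-}(k)J$ with $J$ the off-diagonal matrix displayed in (e), and then verify that this algebraic relation lifts through \eqref{equ:VolterraforPhi} to $\Phi_j$ itself. The key technical point is that the Volterra kernel $K_j^p(x,y)$ is $k$-independent, so the same right-multiplicative jump propagates termwise in the Neumann series without distortion; the compact-perturbation hypothesis Assumption \ref{assumption:q0}(a) additionally guarantees that $\Phi_j(x;k)$ extends meromorphically across the cut, so that the boundary values in (e) exist in a strong pointwise sense away from $\pm c_j$. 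Together with the endpoint behavior from (f), this completes the Riemann--Hilbert data used throughout the subsequent sections.
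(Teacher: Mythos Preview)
Your proposal is correct and in fact more detailed than the paper's own treatment: the paper gives no proof at all, merely stating that the proposition collects ``basic properties \ldots\ whose proof is standard.'' Your outline is precisely the standard argument the paper has in mind, so there is nothing to compare.
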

\begin{remark}
    When $k$ lies on the branch cut $(-c_l, c_l)$, $[\Phi_l]_1(x; k)$ denotes the boundary value of $[\Phi_l]_1(x; \tilde{k})$ as $\tilde{k}$ approaches $k$ from $\mathbb{C}^+$, while $[\Phi_l]_2(x; k)$ denotes the boundary value as $\tilde{k}$ approaches $k$ from $\mathbb{C}^-$. The definition for $\Phi_r(x; k)$, $k\in(-c_r,c_r)$ 
    is analogous.
\end{remark}

Since the matrices $\Phi_{l}(x;k)$ and $\Phi_{r}(x;k)$ are both solutions to the $x$-part of the Lax pair \eqref{equ:lax pair} for $k\in\mathbb{R}\setminus\{\pm c_l,\pm c_r\}$ and $(x,t)\in\mathbb{R}\times[0,+\infty)$, they must be linearly dependent. 
Consequently, there exists a scattering matrix $S(k)$, independent of $x$, such that
\begin{equation}\label{equ:scattering matrix}
    S(k)=\Phi_{r}^{-1}(x;k)\Phi_{l}(x;k),\quad x\in\mathbb{R},\;k\in\mathbb{R}\setminus\{\pm c_l,\pm c_r\}.\end{equation}
Due to the symmetries of the $x$-part in \eqref{equ:lax pair}, the scattering matrix $S(k)$
admits the following structure
\begin{equation}\label{def:S(k)}
    S(k)=\begin{pmatrix}a(k) & b^*(k) \\ b(k) & a^*(k) \end{pmatrix},\quad k\in\mathbb{R}\setminus\{\pm c_l,\pm c_r\},
\end{equation}
where the scattering coefficients $a(k)$, $b(k)$ are given by
\begin{align}
    &a(k)=\det\left([\Phi_{l}]_1, [\Phi_{r}]_2\right), \label{equ:a,b det-1}\\
    &b(k)=\det\left([\Phi_{r}]_1, [\Phi_{l}]_1\right).\label{equ:a,b det-2}
\end{align}
To proceed, we define the reflection coefficient
\begin{equation}
    r(k):=\frac{b(k)}{a(k)},\quad k\in\mathbb{R}\setminus\{\pm c_l,\pm c_r\}.
\end{equation}

We now introduce the modified Jost functions $\phi_j$ for $j\in\{l,r\}$:
\begin{equation}\label{def:phij}
    \begin{aligned}
        &\phi_l(x;k)=\Phi_l(x;k)e^{iX_l(k)x\sigma_3}, && k\in(\overline{\mathbb{C}^{+}},\overline{\mathbb{C}^{-}})\setminus\{-c_{l}, c_{l}\},\\
        &\phi_r(x;k)=\Phi_r(x;k)e^{iX_r(k)x\sigma_3}, && k\in(\overline{\mathbb{C}^{-}},\overline{\mathbb{C}^{+}})\setminus\{-c_{r}, c_{r}\}.
    \end{aligned}
\end{equation} 

Some basic properties of the function $\phi_j$ is summarized in the following proposition.
\begin{proposition}\label{phi properties}
    For $j\in\{l,r\}$, the function $\phi_j$ has the following properties:
\begin{itemize}
        \item[\rm (a)]For each $x\in\mathbb{R}$, we have\\
        $[\phi_r]_1(x; k)$ is holomorphic for $k\in\mathbb{C}^{-}$ and has continuous extension to $\overline{\mathbb{C}^{-}}\backslash\{-c_{r}, c_{r}\}$,\\
        $[\phi_r]_2(x; k)$ is holomorphic for $k\in\mathbb{C}^{+}$ and has continuous extension to $\overline{\mathbb{C}^{+}}\backslash\{-c_{r}, c_{r}\}$,\\
        $[\phi_l]_1(x; k)$ is holomorphic for $k\in\mathbb{C}^{+}$ and has continuous extension to $\overline{\mathbb{C}^{+}}\backslash\{-c_{l}, c_{l}\}$,\\
        $[\phi_l]_2(x; k)$ is holomorphic for $k\in\mathbb{C}^{-}$ and has continuous extension to $\overline{\mathbb{C}^{-}}\backslash\{-c_{l}, c_{l}\}$.\\
        Moreover, for each $x\in\mathbb{R}$, we have $\phi_j(x,\cdot)\in\mathcal{C}^{10}(\mathbb{R}\setminus\{\pm c_j\})$.
        \item[(b)] $\det\phi_j(x;k)=1$, \quad$k\in\mathbb{R}\setminus[-c_j,c_j]$.
        \item[(c)] It holds that
        \begin{align*}
    &\phi_{l}(x;k)=\Delta_{l}(k)\left(I+o(1)\right), \quad x\rightarrow-\infty, \quad k\in\mathbb{R}\setminus\{-c_l,c_l\},\\
    &\phi_{r}(x;k)=\Delta_{r}(k)\left(I+o(1)\right), \quad x\rightarrow+\infty, \quad k\in\mathbb{R}\setminus\{-c_r,c_r\}.
\end{align*}
        \item[(d)] $\phi_j(x;k)$ admits the following symmetries
        \begin{equation*}
            \sigma_1\overline{\phi_{j}(x;\overline{k})}\sigma_1=\phi_{j}(x;k), \quad
            [\phi_j]_1(x;k)=\sigma_1[\phi_j]_2(x;-k),\quad
            \overline{\phi_j(x;-\bar{k})}=\phi_j(x;k).
        \end{equation*}
   \item[(e)] As $k\rightarrow\pm c_j$, we have $\phi_j(x; k)=\mathcal{O}((k\mp c_j)^{-\frac{1}{4}})$.
\end{itemize}
\end{proposition}
\begin{proof}
    The items (a)--(d) follow from the properties stated in 
    Proposition \ref{prop: properties of Phi(r,j)} directly. 

    We now prove (e). Taking $\phi_l(x;k)$ as an example, from the definition in \eqref{def:phij}, we obtain
    \begin{equation*}
        \phi_{l}(x;k)=\Delta_l(k)+\int_{-\infty}^{x}\Phi^b_l(x;k)(\Phi^b_l)^{-1}(y;k)\left[Q_0(y)-Q^b_l\right]\phi_{l}(y;k)e^{iX_l(k)(x-y)\sigma_3}\dif y,
    \end{equation*}
   and then $[\phi_l]_1$ can be represented as 
\begin{equation}\label{equ:formula for phil}
    [\phi_l]_1=\Lambda_l[\Delta_l]_1,\quad k\in\overline{\mathbb{C}^{+}}\backslash\{-c_{l}, c_{l}\},
\end{equation}
where
\begin{equation}\label{def:Lambda}
    \Lambda_l(x;k)=I+\int_{-\infty}^{x}e^{iX_l(k)(x-y)}\Phi^b_l(x;k)(\Phi^b_l)^{-1}(y;k)\left[Q_0(y)-Q^b_l\right]\Lambda_l(y;k)\dif y.
\end{equation}
Using similar manners stated in \cite[Lemma 3.4]{lenellsmkdv}, 
we can show that for every $x\in\mathbb{R}$, 
$\Lambda(x,k)$ has a continuous extension for $k\in\overline{\mathbb{C}^{+}}$, and 
$\Lambda(x,\cdot)\in\mathcal{C}^{10}(\mathbb{R}\setminus\{\pm c_l\})\cap\mathcal{C}(\mathbb{R})$. 
Furthermore, applying standard claims of Neumann series to $\Lambda_l(x;k)$, 
it follows that as $k\to\pm c_l$,
\begin{equation}\label{equ:expansion for lambda}
\Lambda_l(x;k)=\Lambda_l\left(x,\pm c_l\right)+\sum_{m=1}^{9} \Lambda^{(m)}_{l,\pm}(x)\left(k\mp c_l\right)^{m / 2}+o\left(\left(k\mp c_l\right)^{\frac{9}{2}}\right),
\end{equation}
where $\Lambda^{(m)}_{l,\pm}(x)\in\mathbb{C}$. 
Combining this with the fact that $\Delta_l(k)=\mathcal{O}((k\mp c_l)^{-1/4})$ as $k\to\pm c_l$, we conclude that $[\phi_l]_1=\mathcal{O}((k\mp c_l)^{-1/4})$. Furthermore, by the symmetry property stated in item (d), $[\phi_l]_2$ admits a similar expansion. The analysis for $\phi_r$ proceeds analogously by defining $\Lambda_r(x;k)$, which completes the proof of (e).
\end{proof}

With the aid of Proposition \ref{prop: properties of Phi(r,j)} and \ref{phi properties}, we derive the following properties for the spectral functions $a(k)$, $b(k)$ and the reflection coefficient $r(k)$.
\begin{proposition}\label{prop:a,b,r}
    Spectral functions $a(k)$, $b(k)$ and reflection coefficient $r(k)$ satisfy the following properties:
\begin{itemize}
    \item[\rm (a)] Spectral function $a(k)$ is holomorphic for $k\in\mathbb{C}^{+}$, and it could be continuously
    extended up to the boundary $\mathbb{R}\setminus\{\pm c_l, \pm c_r\}$. As $k\to \pm c_j,$ for $j\in\{l,r\}$,
    we have $a(k)=\mathcal{O}((k\mp c_j)^{-1/4})$. For $k\in\mathbb{C}^+$, as $k\to\infty$,
    we have $a(k)=1+\mathcal{O}(k^{-1})$. The spectral function $b(k)$ is defined continuously for $k\in\mathbb{R}\setminus\{\pm c_l, \pm c_r\}$.
    In particular, $a$, $b$ belong to $\mathcal{C}^{10}(\mathbb{R}\setminus\{\pm c_l, \pm c_r\})$.
    \item[\rm (b)] $a(k)$ has no zeros on the complex plane $\mathbb{C}$.
    \item[\rm (c)] In their domains of definition, we have
        \begin{align}\label{symmetry a,b,r}
            a(k)=\overline{a(-\bar{k})}, \quad b(k)=\overline{b(-\bar{k})}, \quad  r(k)=\overline{r(-\bar{k})}.
        \end{align}
    \item[\rm (d)] The functions $a(k)$, $b(k)$, and $r(k)$ obey the following jump relations:
    \begin{align*}
        &a_{+}(k)=a^*_{-}(k), \; b_{+}(k)=-b^*_{-}(k),\; r_{+}(k)=-r^*_{-}(k), && k\in (-c_{r}, c_{r}), \\
        &a_{+}(k)=b_{-}^{*}(k), \; b_{+}(k)=a^*_{-}(k), \; r_{+}(k)=r_{-}^{*}(k)^{-1},  && k\in (-c_l, -c_{r})\cup(c_{r}, c_l), \\
        &a_{+}(k)=a_{-}(k), \; b_{+}(k)=b_{-}(k), \; r_{+}(k)=r_{-}(k),&& k\in (-\infty,-c_l)\cup(c_l,+\infty). 
    \end{align*}    
    It can be readily seen that $\vert r(k) \vert=1$ for $k\in(-c_l, -c_{r})\cup(c_{r}, c_l)$
    and $|r(k)|<1$ for $k$ belongs to the other intervals on $\mathbb{R}$.
    \item[\rm (e)] The reflection coefficient $r(k)\in\mathcal{C}^{10}(\mathbb{R}\setminus\{\pm c_l,\pm c_r\})$ has the following asymptotics near the branch cut points:
    \begin{equation}\label{expansionrk}
        r(k)=\begin{cases}
            \sum_{m=0}^{9} b_{l, m}\left(k-c_l\right)^{m / 2}+o(\left(k-c_l\right)^{\frac{9}{2}}), &  k \searrow c_l, \\
            \sum_{m=0}^{9} i^m b_{l,m}\left(c_l-k\right)^{m / 2}+o(\left(c_l-k\right)^{\frac{9}{2}}), & k \nearrow c_l, \\ 
            \sum_{m=0}^{9} i^m b_{r,m}\left(k-c_r\right)^{m / 2}+o(\left(k-c_r\right)^{\frac{9}{2}}), &  k \searrow c_r, \\ 
            \sum_{m=0}^{9}(-1)^m b_{r,m}\left(c_r-k\right)^{m / 2}+o(\left(c_r-k\right)^{\frac{9}{2}}), &  k \nearrow c_r,\\
            \sum_{m=0}^{9} (-i)^m\overline{b_{ l,m}}\left(k+c_l\right)^{m / 2}+o(\left(k+c_l\right)^{\frac{9}{2}}), & k \searrow -c_l, \\
            \sum_{m=0}^{9}  \overline{b_{l,m}}\left(-c_l-k\right)^{m / 2}+o(\left(-c_l-k\right)^{\frac{9}{2}}) ,& k \nearrow -c_l, \\ 
            \sum_{m=0}^{9} (-1)^m \overline{b_{r,m}}\left(k+c_r\right)^{m / 2}+o(\left(k+c_l\right)^{\frac{9}{2}}), & k \searrow -c_r, \\ 
            \sum_{m=0}^{9} (-i)^m \overline{b_{r,m}}\left(-c_r-k\right)^{m / 2}+o(\left(-c_r-k\right)^{\frac{9}{2}}), &  k \nearrow -c_r,
            \end{cases}
    \end{equation}
    where the coefficients $b_{j,m} \in \mathbb{C}$ satisfies
\begin{equation}\label{expansionrk coefficient}
    \left|b_{j, 0}\right|=1, \quad b_{j,1} \neq 0, \quad \sum_{m=0}^n i^{n-m}(-i)^m b_{j,n-m}\overline{b_{j,m}}=0, \quad j\in\{l,r\}, \;n=1, \dots, 9.
\end{equation}
Moreover, $r(k)$ has the following asymptotics as $k\to\pm\infty$:
\begin{equation}\label{dacay of rk}
    \partial_k^mr(k)=\mathcal{O}(k^{-5}),\quad m=0,\dots,10.
\end{equation}
\end{itemize}
\end{proposition}
\begin{proof}
\hfill
\begin{itemize}
\item[\rm (a)] The analytic properties of $a(k)$ and $b(k)$ follow directly from those of $\Phi_j$ stated in the item (a) of Proposition \ref{prop: properties of Phi(r,j)}
via \eqref{equ:a,b det-1}--\eqref{equ:a,b det-2}. The expansions near the branch points $\pm c_l$ and $\pm c_r$ are consequences of item (e) of Proposition \ref{phi properties}. 
The large-$k$ asymptotics of $a(k)$ follow from the corresponding behavior of the Jost functions.

\item[\rm (b)] By the standard claim, we see that $\det S(k)=1$ for $k\in\mathbb{R}\setminus[-c_l,c_l]$. Thus, we 
can obtain that $|a(k)|\geqslant 1$ for $k\in\mathbb{R}\setminus[-c_l,c_l]$. 
Noticing the symmetry $a(k)=\overline{a(-\bar{k})}$ in (c), it remains to show that
$a(k)\neq 0$ for $k\in\mathbb{C}^{+}\cup(-c_l,c_l)$. We deal with the cases $k\in\mathbb{C}^{+}$
and $k\in(-c_l,c_l)$ separately.

At first, we assume that there exists a zero $k_0\in\mathbb{C}^{+}$ such that $a(k_0)=0$. 
Define a vector complex-valued function $f:=(f_1,f_2)$ which belongs to the Hilbert space $L^{2}(\mathbb{R})$,
that is, both $f_1$ and $f_2$ belong to $L^{2}(\mathbb{R})$. In this space, we equip the inner product by
\begin{equation*}
    \langle f, g\rangle=\int_{\mathbb{R}}\left(f_1(x)\overline{g_1(x)}+f_2(x)\overline{g_2(x)}\right)\dif x.
\end{equation*}
It's noticed that mKdV equation admits the self-adjoint Lax 
operator $L:=i\sigma_3\partial_x-i\sigma_3Q_{0}$, which satisfies
\begin{equation*}
    \langle Lf, g\rangle=\langle f, Lg\rangle,  
\end{equation*}
for any vector complex-valued functions $f,g\in H^{1}(\mathbb{R})\subset L^1(\mathbb{R})$.
Now, we define $p:\mathbb{R}\to\mathbb{C}^{2}$ by
\begin{align}
    p(x):=[\Phi_r]_2(x;k_0)=[\phi_{r}(x;k_0)]_{2}e^{iX_r(k_0)x}.
\end{align}
Since $\im X_{r}(k_0)>0$ for $k_0\in\mathbb{C}^{+}$, we employ the item (c)
of Proposition \ref{phi properties} to yield exponential decay of $p$ as $x\to+\infty$.
Meanwhile, from the determinant representation of $a(k)$ in  \eqref{equ:a,b det-1},
the assumption $a(k_0)=0$ implies that $[\Phi_l]_1(x;k_0)$ and $[\Phi_r]_2(x;k_0)$ are linearly dependent. 
Thus, there exists a nonzero constant $\gamma\in\mathbb{C}$ such that
\begin{align}
    p(x)=\gamma[\Phi_l]_1(x;k_0)=\gamma[\phi_{l}(x;k_0)]_{1}e^{iX_l(k_0)x}.
\end{align}
Also noticing $\im X_{l}(k_0)>0$ for $k_0\in\mathbb{C}^{+}$, and again using the item (c)
of Proposition \ref{phi properties}, we see that $p(x)$ decays exponentially as $x\to-\infty$.
Therefore, we conclude that $p\in H^{1}(\mathbb{R})$.
Furthermore, from the Lax pair \eqref{equ:lax pair}, we have at time $t=0$:
\begin{equation*}                   
    Lp(x)=k_0 p(x).
\end{equation*}
Taking the inner product with $p$ on both sides of the above equation, we obtain
\begin{equation*}                
    k_0\langle p, p\rangle=\langle Lp, p\rangle=\langle p, Lp\rangle=\overline{k_0}\langle p, p\rangle,
\end{equation*}
that is, $(k_0-\overline{k_0})\langle p, p\rangle=0$. Since $\im k_0>0$, we have $\langle p, p\rangle>0$,
which leads to a contradiction. Thus, $a(k)$ has no zeros in $\mathbb{C}^{+}$.

It only remains to show that $a(k)\neq 0$ for $k\in(-c_l,c_l)$.
Utilizing the part (e) of Proposition \ref{prop: properties of Phi(r,j)}
as well as the definition of $S(k)$ in \eqref{equ:scattering matrix}, we have
for $k\in(-c_l,c_l)$
\begin{align*}
    a(k)=\det\left([\Phi_{l}]_1, [\Phi_{r}]_2\right)=\det\left([\Phi_{l}]_2, [\Phi_{r}]_2\right)=\overline{b(k)}. 
\end{align*}
Suppose that there exists a zero $k_0\in(-c_l,c_l)$ such that $a(k_0)=0$. 
Then, we have $b(k_0)=0$. 
From the scattering relation $\Phi_l(x;k)=\Phi_r(x;k)S(k)$, we have
\begin{equation*}
    [\Phi_l]_1(x;k)=a(k)[\Phi_r]_1(x;k)+b(k)[\Phi_r]_2(x;k).
\end{equation*}
Substituting $k=k_0$, it follows that $[\Phi_l]_1(x;k_0)\equiv 0$ for all $x\in\mathbb{R}$.
Then, 
\begin{align*}
    \phi_{l}(x;k_0)=[\Phi_{l}]_{1}(x;k_0)e^{iX_{l+}(k_0)x\sigma_3}
\end{align*}
also vanishes everywhere for $x\in\mathbb{R}$. However, this contradicts the 
item (c) in Proposition \ref{phi properties} since $\Delta_l(k)\neq 0$ for $k\in(-c_l,c_l)$.
Thus, $a(k)$ has no zeros in $(-c_l,c_l)$.

Summarizing the above analysis, we conclude that $a(k)$ has no zeros on $\mathbb{C}$.

\item[\rm (c)] It follows from the symmetries of Jost functions given in item (d) of Proposition \ref{prop: properties of Phi(r,j)}.

\item[\rm (d)] It follows from item (e) of Proposition \ref{prop: properties of Phi(r,j)}
and the scattering matrix \eqref{def:S(k)} that
\begin{align*}
    S_{+}(k)=\begin{pmatrix}
        0 & 1 \\
        -1 & 0
    \end{pmatrix}S_{-}(k)\begin{pmatrix}
        0 & -1 \\
        1 & 0
    \end{pmatrix}, \quad k\in(-c_{r},c_{r})
\end{align*}
as well as
\begin{align*}
    S_{+}(k)=S_{-}(k)\begin{pmatrix}
        0 & -1 \\
        1 & 0
    \end{pmatrix}, \quad k\in (-c_l,-c_{r})\cup(c_{r},c_l).
\end{align*}
Therefore, the jump relations of $a$, $b$ and $r$ are obtained via the entries of the above relations.

\item[\rm (e)] 
The proof of this item is similar to that of \cite[Theorem 2.1(a)]{LenellsDNLS}, so we only outline the derivation here. Using \eqref{equ:a,b det-1}--\eqref{equ:a,b det-2} and the definition of $\phi_j$ in \eqref{def:phij}, we can rewrite $r(k)$ as 
\begin{equation*}
    r(k)=\frac{\phi_{l,11}(0;k)\phi_{r,21}(0;k)-\phi_{l,21}(0;k)\phi_{r,11}(0;k)}{\phi_{l,11}(0;k)\phi_{r,22}(0;k)-\phi_{l,21}(0;k)\phi_{r,12}(0;k)},\quad k\in\mathbb{R}\setminus\{\pm c_l,\pm c_r\}.
\end{equation*}
Since each entry $\phi_{j,mn}$ for $j \in \{l, r\}$ and $m,n \in \{1,2\}$ can be expressed in terms of the entries of $\Lambda_j$ in \eqref{equ:formula for phil}, the expansion \eqref{equ:expansion for lambda} directly implies the asymptotics of $r(k)$ given in \eqref{expansionrk}. Additionally, the relations in \eqref{expansionrk coefficient} follow from the fact that $|r(k)|=1$ for $k\in(-c_l, -c_{r})\cup(c_{r}, c_l)$. 
Finally, the decay estimate \eqref{dacay of rk} is obtained from the 
large-$k$ asymptotics of $\phi_j(x;k)$, $j\in\{l,r\}$, as well as their regularity (cf. \cite[Proposition 3.6 and Corollary 3.8]{LenellsDNLS}).
\end{itemize}
\end{proof}

\subsection{RH characterization of the mKdV equation}
In order to construct a basic RH problem for the long-time asymptotic analysis, it is required to operate
the time evolution of the scattering data. Assuming that the solution $q(x,t)$ of Cauchy problem
\eqref{equ:mkdv}--\eqref{Initial data} exists for $t\geqslant 0$, it is followed that
\begin{align}
    &\Phi_{l}(x,t;k)=\Phi_{l}^{b}(x,t;k)\left(I+o(1)\right), \quad x\rightarrow-\infty, \quad k\in\mathbb{R},\label{equ:Jostsolwithtime-1}\\
    &\Phi_{r}(x,t;k)=\Phi_{r}^{b}(x,t;k)\left(I+o(1)\right), \quad x\rightarrow+\infty, \quad k\in\mathbb{R},\label{equ:Jostsolwithtime-2}
\end{align}
where $\Phi_{j}^{b}(x,t;k)$ for $j\in\{l,r\}$  is defined in \eqref{equ:Phi_j^p}.

Since $\Phi_l$ and $\Phi_r$ are defined as simultaneous solutions of the Lax pair \eqref{equ:lax pair}, they must be linearly dependent. Consequently, from \eqref{equ:Jostsolwithtime-1} and \eqref{equ:Jostsolwithtime-2}, we obtain the following relation:
\begin{equation}
    \Phi_{l}(x,t;k)=\Phi_{r}(x,t;k)S(t;k), \quad k\in\mathbb{R}\setminus\{\pm c_{l}, \pm c_r\}.
\end{equation}
On account of the Lax pair \eqref{equ:lax pair}, it is obtained that
\begin{align*}
 \frac{\partial S(t;k)}{\partial t}\equiv 0,
\end{align*}
which implies that $\partial_t a(t;k)=0$ and $\partial_t b(t;k)=0$. This claim shows that the scattering data $a$ and $b$ are constant with respect to time.

Let us define the following matrix-valued function $M^{(0)}(k):=M(x,0;k)$ by
\begin{equation}\label{def:M0(k)}
   M^{(0)}(k):=M(x,0;k)=\left\{
        \begin{aligned}
        &\left(\frac{[\Phi_l(x,0;k)]_1}{a(k)}, [\Phi_r(x,0;k)]_2\right)e^{ikx\sigma_3}, \quad k\in \mathbb{C}^{+}, \\
        &\left([\Phi_r(x,0;k)]_1, \frac{[\Phi_l(x,0;k)]_2}{a^{*}(k)}\right)e^{ikx\sigma_3}, \quad k\in \mathbb{C}^{-},
        \end{aligned}
        \right.
\end{equation}
The matrix-valued function $M^{(0)}$ satisfies the RH problem below.
\begin{RHP}\label{RHP:basic 0RHP}
    \hfill
    \begin{itemize}
        \item $M^{(0)}(k)$ is holomorphic for $k\in\mathbb{C}\backslash\mathbb{R}$.
        \item $M^{(0)}(k)$ has continuous boundary values $M^{(0)}_{\pm}(k)$ on $\mathbb{R}$ with the jump condition
        \begin{align*}
            M^{(0)}_{+}(k)=M^{(0)}_{-}(k)V_0(k),
        \end{align*}
        where
        \begin{equation}\label{equ:jump V^{(0)}(k)}
            V^{(0)}(k)=
                \begin{cases}
                \begin{pmatrix}1-rr^* & - r^*e^{-2ikx}\\ re^{2ikx} & 1\end{pmatrix}, &k\in(-\infty,-c_l)\cup(c_l,+\infty),  \\
                \begin{pmatrix}0 & - r^*_{-}e^{-2ikx}\\ r_{+}e^{2ikx} & 1\end{pmatrix}, &k\in (-c_l,-c_{r})\cup(c_{r},c_l),\\
                \begin{pmatrix}0 & -e^{-2ikx}\\ e^{2ikx} & 0\end{pmatrix},  &k\in (-c_{r}, c_{r}).
                \end{cases}       
        \end{equation}
         
        \item As $k\rightarrow \infty$ in $\mathbb{C}\setminus\mathbb{R}$, we have $M(k)=I+\mathcal{O}(k^{-1})$.
    \end{itemize}
\end{RHP}     

Due to the time dependence, we are motivated to construct a piecewise analytic matrix-valued function as follows:
\begin{equation}\label{eq: def of M(x,t;k)}
    M(x,t;k):=\left\{
        \begin{aligned}
        &\left(\frac{[\Phi_l(x,t;k)]_1}{a(k)}, [\Phi_r(x,t;k)]_2\right)e^{it\theta(\xi;k)\sigma_3}, \quad k\in \mathbb{C}^{+}, \\
        &\left([\Phi_r(x,t;k)]_1, \frac{[\Phi_l(x,t;k)]_2}{a^{*}(k)}\right)e^{it\theta(\xi;k)\sigma_3}, \quad k\in \mathbb{C}^{-},
        \end{aligned}
        \right.
\end{equation}
where $\theta(\xi;k)=4k^3+12k\xi$ and $\xi=x/(12t)$. The matrix-valued function $M(x,t;\cdot):\mathbb{C}\backslash\mathbb{R}\to GL(2,\mathbb{C})$ satisfies the RH problem below.
\begin{RHP}\label{RHP:basic RHP}
    \hfill
    \begin{itemize}
        \item $M(k)$ is holomorphic for $k\in\mathbb{C}\backslash\mathbb{R}$.
        \item $M(k)$ has continuous boundary values $M_{\pm}(k)$ on $\mathbb{R}$ with the jump condition
        \begin{align*}
            M_{+}(k)=M_{-}(k)V(k),
        \end{align*}
        where
        \begin{equation}\label{equ:jump V(k)}
            V(k)=
                \begin{cases}
                \begin{pmatrix}1-rr^* & - r^*e^{-2it\theta}\\ re^{2it\theta} & 1\end{pmatrix}, &k\in(-\infty,-c_l)\cup(c_l,+\infty),  \\
                \begin{pmatrix}0 & - r^*_{-}e^{-2it\theta}\\ r_{+}e^{2it\theta} & 1\end{pmatrix}, &k\in (-c_l,-c_{r})\cup(c_{r},c_l),\\
                \begin{pmatrix}0 & -e^{-2it\theta}\\ e^{2it\theta} & 0\end{pmatrix},  &k\in (-c_{r}, c_{r}).
                \end{cases}       
        \end{equation}
         
        \item As $k\rightarrow \infty$ in $\mathbb{C}\setminus\mathbb{R}$, we have $M(k)=I+\mathcal{O}(k^{-1})$.
        \item As $k\rightarrow\pm c_{l}$, we have $M(k)=\mathcal{O}(1)$.
    
        \item As $k\rightarrow\pm c_{r}$, we have
        \begin{align*}
            &M(k)=\mathcal{O}\begin{pmatrix}(k\mp c_{r})^{\frac{1}{4}} & (k\mp c_{r})^{-\frac{1}{4}} \\ (k\mp c_{r})^{\frac{1}{4}} & (k\mp c_{r})^{-\frac{1}{4}}\end{pmatrix}, \quad k\in\mathbb{C}^{+}, \\
            &M(k)=\mathcal{O}\begin{pmatrix}(k\mp c_{r})^{-\frac{1}{4}} & (k\mp c_{r})^{\frac{1}{4}} \\ (k\mp c_{r})^{-\frac{1}{4}} & (k\mp c_{r})^{\frac{1}{4}}\end{pmatrix}, \quad k\in\mathbb{C}^{-}.
        \end{align*}
    \end{itemize}
\end{RHP}                                                                      It follows from item (d) of Proposition \ref{prop: properties of Phi(r,j)} and
item (c) of Proposition \ref{prop:a,b,r} that the solution $M(k)$ to the RH problem \ref{RHP:basic RHP}
automatically satisfies the following symmetries:
\begin{align*}
M(k)=\sigma_1M^{*}(k)\sigma_1=\overline{M(-\bar{k})}=\sigma_1M(-k)\sigma_1.
\end{align*}

\subsubsection{Proof of Theorem \ref{thm: global solution existence}}
Since $\det V(k)=1$, it follows from standard proof that the solution of the 
RH problem \ref{RHP:basic RHP} for $M$ is unique if it exists.
Furthermore, the existence of such a solution to RH problem \ref{RHP:basic RHP}
is ensured by H\"older continuity of the jump matrix $V(k)$ and the following vanishing lemma (cf. \cite{ZhouVanishLemma}).
\begin{proposition}[Vanishing lemma]
    Suppose that $M(x,t;k)$ is a solution to the RH problem \textup{\ref{RHP:basic RHP}} with 
    the homogeneous large-$k$ condition for $(x,t)\in\mathbb{R}\times[0,+\infty)$, namely 
    \begin{equation}\label{new normalization}
        M(k)=\mathcal{O}(k^{-1}),\quad k\to\infty. 
    \end{equation}
    Then $M(k)$ vanishes everywhere.
\end{proposition}
\begin{proof}
    Define $H(k)=M(k)M(\bar{k})^{\mathrm{H}}$. Using the decay of $r(k)$ as $k\to\infty$ in item (e) of Proposition \ref{prop:a,b,r} and the homogeneous condition 
    \eqref{new normalization}, we obtain
        \begin{align*}
            &\int_{\mathbb{R}}H_+(k)\dif k=\int_{\mathbb{R}}M_-(k)V(k)M_-(k)^{\mathrm{H}}\dif k=0,\\
            &\int_{\mathbb{R}}H_-(k)\dif k=\int_{\mathbb{R}}M_-(k)V(k)^{\mathrm{H}}M_-(k)^{\mathrm{H}}\dif k=0, 
        \end{align*}
which implies that
\begin{equation*}
    \int_{\mathbb{R}}M_-(k)Y(k)M_-(k)^{\mathrm{H}}\;\mathrm{d}k=0,
\end{equation*}
where $Y(k)=\frac{1}{2}(V(k)+V(k)^{\mathrm{H}})$. 
Since $|r(k)|<1$ on $\mathbb{R}\setminus[-c_l,c_l]$, we see that $V(k)$ is positive definite on $\mathbb{R}\setminus[-c_l, c_l]$. 
It then follows that $M_-(k)=0$ on $\mathbb{R}\setminus[-c_l,c_l]$. 
Applying the Privalov's uniqueness theorem on $M(k)$ for $k\in\mathbb{C}^{-}$, 
we indeed have $M_-(k)=0$ on $\mathbb{R}$. which implies that $M_+(k)=0$ via $M_+(k)=M_-(k)V(k)$. The decay of $M(k)$ implies that $M(k)$ 
vanishes everywhere.
\end{proof}
Furthermore, under Assumption \ref{assumption on q_0} for the initial data $q_0$, 
it follows from the proof of \cite[Theorem 7]{lenellsmkdvfourier} that
a classical solution $q(x,t)$ of the mKdV equation \eqref{equ:mkdv} exists and 
can be reconstructed via the following limit for all $(x,t)\in\mathbb{R}\times[0,+\infty)$:
\begin{equation}\label{equ:recovering formula}
    q(x,t)=2i\lim_{k\rightarrow\infty} \left(kM(x,t;k)\right)_{12}.
\end{equation}
The crucial step in the proof of \cite[Theorem 7]{lenellsmkdvfourier} is to 
show that the solution $M$ of the RH problem \ref{RHP:basic RHP} satisfies the Lax pair equations:
\begin{align*}
    \frac{\partial M}{\partial x}+ik[\sigma_3,M]=QM,\quad 
    \frac{\partial M}{\partial t}+4ik^3[\sigma_3,M]=VM,
\end{align*}
for all $(x,t)\in\mathbb{R}\times[0,+\infty)$. 
To address this, it suffices to verify that 
$r\in L^{2}(\mathbb{R})\cap L^{\infty}(\mathbb{R})$ and $r(k)=\mathcal{O}(k^{-5})$.
Both requirements are satisfied due to items (d) and (e) of Proposition \ref{prop:a,b,r}.
Furthermore, by the definition of \eqref{def:M0(k)} and the uniqueness, 
the solution $M^{(0)}(k)$ of the RH problem \ref{RHP:basic 0RHP} satisfies the RH problem \ref{RHP:basic RHP} 
at $t=0$. Consequently, it follows from the reconstruction formula \eqref{equ:recovering formula} that 
$q(x,0)$ is indeed equal to the initial data $q_0(x)$.
It remains to show that $q(x,t)$ satisfies the boundary stated in
item (d) of Definition \ref{def: global solution}. This will be accomplished by
finishing the proof of Theorem \ref{thm:mainthm}.

These facts lead to the Theorem \ref{thm: global solution existence}.

\subsubsection{Factorizations of jump matrix}

A crucial step of performing the steepest descent analysis is to open lenses, which is aided by two well-known
factorizations of the jump matrix $V(k)$ defined in \eqref{equ:jump V(k)}. Denote $r_2(k)={r^*(k)}/{(1-r(k)r^*(k))}$ for 
$k\in\mathbb{R}\setminus[-c_l,c_l]$, then the factorizations
utilized throughout the context can be listed as follows.

For $k\in(-\infty,-c_l)\cup(c_l,+\infty)$,
\begin{align}
    \begin{pmatrix} 1-rr^* & -r^*e^{-2it\theta} \\ re^{2it\theta} & 1 \end{pmatrix}&=\begin{pmatrix} 1 & -r^*e^{-2it\theta} \\ 0 & 1\end{pmatrix}\begin{pmatrix} 1 & 0 \\ re^{2it\theta} & 1\end{pmatrix}\label{factor11}\\
        &=\begin{pmatrix} 1 & 0 \\r_2^*e^{2it\theta} & 1 \end{pmatrix}\left(1-rr^*\right)^{\sigma_3}\begin{pmatrix} 1 & -r_2e^{-2it\theta}\\ 0 & 1\end{pmatrix}.
\end{align}

For $k\in (-c_l,-c_{r})\cup(c_{r},c_l)$, where $r_+=1/r_-^*$,
\begin{align}
    \begin{pmatrix} 0 & -r_-^*e^{-2it\theta} \\ r_+e^{2it\theta} & 1 \end{pmatrix}&=\begin{pmatrix} 1 & -r_-^*e^{-2it\theta} \\ 0 & 1\end{pmatrix}\begin{pmatrix} 1 & 0 \\ r_+e^{2it\theta} & 1\end{pmatrix}\label{factor21}\\
        &=\begin{pmatrix} 1 & 0 \\ r_{2,-}^*e^{2it\theta}& 1 \end{pmatrix}\begin{pmatrix} 0 & -r_-^*e^{-2it\theta} \\ r_{+}e^{2it\theta} & 0\end{pmatrix}\begin{pmatrix} 1 & -r_{2,+}e^{-2it\theta}\\ 0 & 1\end{pmatrix}.
\end{align}

For $k\in (-c_{r}, c_{r})$, where $r_{+}=-r_-^*$,
\begin{align}
    \begin{pmatrix}0 & -e^{-2it\theta}\\ e^{2it\theta} & 0\end{pmatrix}&=\begin{pmatrix} 1 & -r_-^*e^{-2it\theta} \\ 0 & 1\end{pmatrix}\begin{pmatrix}0 & -e^{-2it\theta}\\ e^{2it\theta} & 0\end{pmatrix}\begin{pmatrix} 1 & 0 \\ re^{2it\theta} & 1\end{pmatrix}\\
    &=\begin{pmatrix} 1 & 0 \\ r_{2,-}^*e^{2it\theta} & 1 \end{pmatrix}\begin{pmatrix} 0 & -e^{-2it\theta} \\ e^{2it\theta} & 0\end{pmatrix}\begin{pmatrix} 1 & -r_{2,+}e^{-2it\theta}\\ 0 & 1\end{pmatrix}.
\end{align}

\section{Asymptotic analysis of the RH problem for $M$ in $\mathcal{R}_{\textup{\uppercase\expandafter{\romannumeral1}}}$}\label{sec:asymptotic analysis in RI}
This section is devoted to the large-time asymptotic analysis 
of the RH problem \ref{RHP:basic RHP} in the region $\mathcal{R}_{\textup{\uppercase\expandafter{\romannumeral1}}}$. We refer to Table \ref{tab:notation-summary} for a compact guide to the principal spectral functions and contour notations used throughout the deformations.

\subsection{First transformation: $M\rightarrow M^{(1)}$}
Let us start with the construction of the $g$-function, which is crucial to deform
the RH problem \ref{RHP:basic RHP} into a form suitable for asymptotic analysis.
\subsubsection{The $g$-function}
For $\xi\in\mathcal{R}_{\textup{\uppercase\expandafter{\romannumeral1}}}$, we introduce
\begin{equation}\label{equ:g function RI}
g_{\textup{\uppercase\expandafter{\romannumeral1}}}(\xi;k):=(4k^2+12\xi+2c_{l}^2)X_{l}(k)
\end{equation}
with $X_l(k)=\sqrt{k^2-c_l^2}$ given in \eqref{def:X_j(k)}. 
The branch of the square root is chosen such that $X_l(k)=k+\mathcal{O}(k^{-1})$ as $k\rightarrow\infty$.
\begin{proposition}The  function $g_{\textup{\uppercase\expandafter{\romannumeral1}}}$ defined in \eqref{equ:g function RI} satisfies the following properties:
    \begin{itemize}
        \item $g_{\textup{\uppercase\expandafter{\romannumeral1}}}(\xi;k)$ is holomorphic for $k\in\mathbb{C}\backslash[-c_l,c_l]$.
        \item As $k\rightarrow\infty$ in $\mathbb{C}\setminus[-c_l,c_l]$,
        we have $g_{\textup{\uppercase\expandafter{\romannumeral1}}}(\xi;k)=\theta(\xi;k)+\mathcal{O}(k^{-1})$.
        \item For $k\in(-c_l,c_l)$, $g_{\textup{\uppercase\expandafter{\romannumeral1}},+}(\xi;k)+g_{\textup{\uppercase\expandafter{\romannumeral1}}, -}(\xi;k)=0$.
    \end{itemize}
\end{proposition}
It's readily seen that the $k$-derivative of $g(k)$ is given by
\begin{equation}\label{g_1'}   g_{\textup{\uppercase\expandafter{\romannumeral1}}}'(k)=\frac{12k\left(k-\eta_l(\xi)\right)\left(k+\eta_l(\xi)\right)}{X_l(k)}, \quad \eta_l(\xi)=\sqrt{-\xi+\frac{c_l^2}{2}}\in(c_l,+\infty).
\end{equation}
The signature table for $\im g_{\textup{\uppercase\expandafter{\romannumeral1}}}$ is illustrated in Figure \ref{fig:signs img RI}, where ``$+$'' represents
that $\im g_{\textup{\uppercase\expandafter{\romannumeral1}}}>0$, and ``$-$'' represents that $\im g_{\textup{\uppercase\expandafter{\romannumeral1}}}<0$.
\begin{figure}[H]
\begin{center}
    \tikzset{every picture/.style={line width=0.75pt}} 
    \begin{tikzpicture}[x=0.75pt,y=0.75pt,yscale=-1,xscale=1]
    \draw    (413,61) .. controls (387,61) and (386,242) .. (419,242) ;
    \draw    (159,152) -- (475,152) ;
    \draw    (203,61) .. controls (228,62) and (228,242) .. (200,243) ;
    \draw (203,155.4) node [anchor=north west][inner sep=0.75pt]  [font=\tiny]  {$-\eta_l$};
       \fill (221,152) circle (1.2pt);
    \draw (397,154.4) node [anchor=north west][inner sep=0.75pt]  [font=\tiny]  {$\eta_l$};
        \fill (394,152) circle (1.2pt);
    \draw (232,156.4) node [anchor=north west][inner sep=0.75pt]  [font=\tiny]  {$-c_{l}$};
    \draw (368,156.4) node [anchor=north west][inner sep=0.75pt]  [font=\tiny]  {$c_{l}$};
    \draw (255.18,157.13) node [anchor=north west][inner sep=0.75pt]  [font=\tiny,rotate=-1.56]  {$-c_{r}$};
    \draw (346,156.4) node [anchor=north west][inner sep=0.75pt]  [font=\tiny]  {$c_{r}$};
    \draw (441,102.4) node [anchor=north west][inner sep=0.75pt]    {$+$};
    \draw (447,183.4) node [anchor=north west][inner sep=0.75pt]    {$-$};
    \draw (294,90.4) node [anchor=north west][inner sep=0.75pt]    {$-$};
    \draw (299,202.4) node [anchor=north west][inner sep=0.75pt]    {$+$};
    \draw (170,185.4) node [anchor=north west][inner sep=0.75pt]    {$-$};
    \draw (173,103.4) node [anchor=north west][inner sep=0.75pt]    {$+$};

    \fill (237,152) circle (1.2pt);
   \fill (261,152) circle (1.2pt);
    \fill (369,152) circle (1.2pt);
    \fill (347,152) circle (1.2pt);
    \end{tikzpicture}
\caption{Signature table of $\im g_{\textup{\uppercase\expandafter{\romannumeral1}}}(\xi;k)$ for $\xi\in\mathcal{R}_{\textup{\uppercase\expandafter{\romannumeral1}}}$.}\label{fig:signs img RI}
\end{center}
\end{figure}
\subsubsection{RH problem for $M^{(1)}$}
By the function $g_{\textup{\uppercase\expandafter{\romannumeral1}}}$ defined in \eqref{equ:g function RI}, we introduce a new matrix-valued function $M^{(1)}$ by
\begin{equation}\label{def:M1 RI}
    M^{(1)}(x,t;k):=M(x,t;k)e^{it\left(g_{\textup{\uppercase\expandafter{\romannumeral1}}}(\xi;k)-\theta(\xi;k)\right)\sigma_3}.
\end{equation}
Then the RH problem for $M^{(1)}$ reads as follows:
\begin{RHP}\label{RHP:M1 RI}
\hfill
\begin{itemize}
    \item $M^{(1)}(k)$ is holomorphic for $k\in\mathbb{C}\backslash\mathbb{R}$.
    \item $M^{(1)}(k)$ has continuous boundary values $M_{\pm}^{(1)}(k)$ on $\mathbb{R}$ with the jump condition
    \begin{equation*}
        M^{(1)}_{+}(k)=M^{(1)}_{-}(k)V^{(1)}(k), \quad k\in\mathbb{R},
    \end{equation*}
    where
    \begin{equation}\label{equ:jump V1 RI}
        V^{(1)}(k)=
            \begin{cases}
            \begin{pmatrix}1-rr^* & -r^*e^{-2itg_{\textup{\uppercase\expandafter{\romannumeral1}}}}\\ re^{2itg_{\textup{\uppercase\expandafter{\romannumeral1}}}} & 1\end{pmatrix}, &k\in(-\infty,-c_l)\cup(c_l,+\infty),  \\
            \begin{pmatrix}0 & -r_{-}^*\\ r_{+} & e^{-2itg_{\textup{\uppercase\expandafter{\romannumeral1}},+}} \end{pmatrix}, &k\in (-c_l,-c_{r})\cup(c_{r},c_l),\\
            \begin{pmatrix}0 & -1 \\ 1 & 0\end{pmatrix},  &k\in (-c_{r}, c_{r}).
            \end{cases}
    \end{equation}
    \item As $k\rightarrow\infty$ in $\mathbb{C}\setminus\mathbb{R}$, we have $M^{(1)}(k)=I+\mathcal{O}(k^{-1})$.
    \item $M^{(1)}(k)$ admits the same singular behavior as $M(k)$ at branch points $\pm c_{l},\pm c_{r}$.
\end{itemize}
\end{RHP}
\subsection{Second transformation: $M^{(1)}\rightarrow M^{(2)}$}
In this section, we introduce an auxiliary function 
$D_{\textup{\uppercase\expandafter{\romannumeral1}}}$ to 
pave the way for the subsequent contour deformation 
along the rays $(-\infty, -\eta_{l})$ and $(\eta_{l}, +\infty)$.
\subsubsection{The $D_{\textup{\uppercase\expandafter{\romannumeral1}}}$ function}
Define $D_{\textup{\uppercase\expandafter{\romannumeral1}}}:\mathcal{R}_{\textup{\uppercase\expandafter{\romannumeral1}}}\times\mathbb{C}\setminus([-\eta_l,-c_r]\cup[c_r,\eta_l])\to\mathbb{C}$ by
\begin{equation}\label{equ:def D function RI}
\begin{aligned}
D_{\mathrm{I}}(\xi;k):=\exp\bigg\{\frac{X_{l}(k)}{2\pi i}
\bigg[&\left(\int_{-\eta_l(\xi)}^{-c_l}+\int_{c_l}^{\eta_l(\xi)}\right)
\frac{\log(1-r(s)r^{*}(s))}{X_{l}(s)(s-k)}\dif s \\
&+ \left(\int_{-c_l}^{-c_{r}}+\int_{c_{r}}^{c_l}\right)
\frac{\log r_{+}(s)}{X_{l+}(s)(s-k)}\dif s\bigg]\bigg\}.
\end{aligned}
\end{equation}

The necessary properties of the function 
$D_{\textup{\uppercase\expandafter{\romannumeral1}}}$ are given as follows.
\begin{proposition}\label{prop: D function RI}
The function $D_{\textup{\uppercase\expandafter{\romannumeral1}}}$ defined in \eqref{equ:def D function RI} satisfies the following properties for $\xi\in \mathcal{R}_{\textup{\uppercase\expandafter{\romannumeral1}}}$:
\begin{itemize}
    \item[\rm (a)]$D_{\textup{\uppercase\expandafter{\romannumeral1}}}(k)$ is holomorphic for $k\in \mathbb{C}\backslash([-\eta_l,-c_r]\cup[c_r,\eta_l])$.
    \item[\rm (b)]$D_{\textup{\uppercase\expandafter{\romannumeral1}}}(k)$ satisfies the following jump relations:
    \begin{equation*}
          \begin{aligned}
       & D_{\textup{\uppercase\expandafter{\romannumeral1}}, +}(k)=D_{\textup{\uppercase\expandafter{\romannumeral1}}, -}(k)(1-r(k)r^*(k)), &&k\in(-\eta_l,-c_l)\cup(c_l,\eta_l), \\
       & D_{\textup{\uppercase\expandafter{\romannumeral1}}, +}(k)D_{\textup{\uppercase\expandafter{\romannumeral1}}, -}(k)=r_{+}(k), &&k\in (-c_l,-c_{r})\cup(c_{r},c_l),\\
       & D_{\textup{\uppercase\expandafter{\romannumeral1}}, +}(k)D_{\textup{\uppercase\expandafter{\romannumeral1}}, -}(k)=1, &&k\in (-c_{r}, c_{r}).
        \end{aligned} 
    \end{equation*}
    \item[\rm (c)]$D_{\textup{\uppercase\expandafter{\romannumeral1}}}(k)$ admits the symmetry $D_{\textup{\uppercase\expandafter{\romannumeral1}}}(-k)=D_{\textup{\uppercase\expandafter{\romannumeral1}}}(k)^{-1}$ for $k\in\mathbb{C}\backslash([-\eta_l,-c_r]\cup[c_r,\eta_l])$.
    \item[\rm (d)]As $k\rightarrow \infty$, $D_{\textup{\uppercase\expandafter{\romannumeral1}}}(\xi;k)=1+\mathcal{O}(k^{-1})$ for all $\xi\in\mathcal{R}_{\textup{\uppercase\expandafter{\romannumeral1}}}$.
    \item[\rm (e)]At the branch points $\pm c_l$ and $\pm c_r$, we have:
    \begin{equation}\label{equ:singular behavior of D}
         \begin{aligned}
       & D_{\textup{\uppercase\expandafter{\romannumeral1}}}(k)=(c_{l}\mp k )^{\frac{1}{4}}e^{d_{\textup{\uppercase\expandafter{\romannumeral1}},0}}\left(1+\mathcal{O}\left((c_l\mp k)^{1/2}\right)\right), &&k\rightarrow\pm c_{l},\; k\in\overline{\mathbb{C}^+ },  \\
       & D_{\textup{\uppercase\expandafter{\romannumeral1}}}(k)=(\pm k- c_r)^{-\frac{\arg b_{r,0}}{2\pi}}e^{ d_{\textup{\uppercase\expandafter{\romannumeral1}},r}}\left(1+\mathcal{O}\left((\pm k- c_r)^{1/2}\right)\right), &&k\rightarrow\pm c_{r},\;k\in\overline{\mathbb{C}^+ },  
    \end{aligned}
    \end{equation}
   
where
\begin{align*}
    d_{\textup{\uppercase\expandafter{\romannumeral1}},0}&=-\frac{1}{2}\log\left(-b_{l,0}\overline{b_{l,1}}-\overline{b_{l,0}}b_{l,1}\right)+\frac{1}{2}\log b_{l,0},\\
    d_{\textup{\uppercase\expandafter{\romannumeral1}},r}&=\frac{\sqrt{c_l^2-c_r^2}}{2\pi}\left[\left(\int_{-\eta_l}^{-c_l}+\int_{c_l}^{\eta_l}\right)\frac{\log(1-r(s)r^{*}(s))}{X_{l}(s)(s\mp c_r)}\dif s+\int_{- c_{l}}^{- c_r}\frac{\log r_{+}(s)}{X_{l+}(s)(s- c_r)}\dif s\right]\\
    &\hspace*{1em}+\frac{\arg b_{r,0}}{2\pi }\log (c_r-c_l)
\end{align*}
  with the principal branch being chosen for the complex power functions as well as the logarithms.
   \item[\rm (f)] As $k$ approaches $\pm\eta_l$ non-tangentially, we have
   \begin{equation}\label{dbr}
        \begin{aligned}
      & D_{\textup{\uppercase\expandafter{\romannumeral1}}}(k)=(k-\eta_l)^{i\nu(\eta_l)}D_{b,r}(k),\quad &&k\to\eta_l,\\
       & D_{\textup{\uppercase\expandafter{\romannumeral1}}}(k)=(-k-\eta_l)^{-i\nu(\eta_l)}D_{b,l}(k),\quad &&k\to-\eta_l,
   \end{aligned}
   \end{equation}
    where $\nu(k)=-\frac{1}{2\pi}\log(1-r(k)r^*(k))$. Here, $D_{b,r}(k)$ and $D_{b,l}(k)$ are uniformly bounded near $\pm\eta_l$ with the following estimates:
    \begin{align}\label{estimate D1}
        \left|\frac{D_{b,r}( k)}{D_{b,r}\left(\eta_l\right)}-1\right| \lesssim \left|k-\eta_l\right|\left(1+\log | k-\eta_l|\right),\;\left|\frac{D_{b,l}( k)}{D_{b,l}\left(-\eta_l\right)}-1\right| \lesssim \left|k+\eta_l\right|\left(1+\log | k+\eta_l|\right).
    \end{align}
    Here $D_{b,r}\left(\eta_l\right)=e^{id_{\textup{\uppercase\expandafter{\romannumeral1}},\eta}},D_{b,l}\left(-\eta_l\right)=e^{-id_{\textup{\uppercase\expandafter{\romannumeral1}},\eta}}$ with
    \begin{align*}
        d_{\textup{\uppercase\expandafter{\romannumeral1}},\eta}&=-\frac{\sqrt{\eta_l^2-c_l^2}}{2\pi }\left[\left(\int_{-c_l}^{-c_{r}}+\int_{c_{r}}^{c_l}\right)\frac{\log r_{+}(s)}{X_{l+}(s)(s-\eta_l)}\dif s+\int_{-\eta_l}^{- c_l}\frac{\log(1-r(s)r^{*}(s))}{X_{l}(s)(s- \eta_l)}\dif s\right]\\
    &\hspace*{1em}-\nu(\eta_l)\log (\eta_l-c_l),
    \end{align*}
   and the principal branch being used for the logarithms.
\end{itemize}
\end{proposition}
\begin{proof}
\hfill
\begin{itemize}
   \item[(a)] It follows from the definition of $D_{\textup{\uppercase\expandafter{\romannumeral1}}}$ by considering basic ideas
   of Cauchy transformation.
   \item[(b)] The jump condition is an immediate consequence by 
   using the well-known Sokhotski-Plemelj formula.
   \item[(c)] It can be straightforward verified.
   \item[(d)] With the aid of the fact $|r(k)|=1$ for $k\in (-c_l,-c_{r})\cup(c_{r},c_l)$, 
   the asymptotics of $D_{\textup{\uppercase\expandafter{\romannumeral1}}}(\xi;k)$ at 
   infinity denoted by $D_{\textup{\uppercase\expandafter{\romannumeral1}}, \infty}(\xi)$ 
   is given by 
    \begin{equation*}
        D_{\textup{\uppercase\expandafter{\romannumeral1}}, \infty}(\xi)=\exp\left\{-\frac{1}{2\pi i}\left[\left(\int_{-\eta_l}^{-c_l}+\int_{c_l}^{\eta_l}\right)\frac{\log(1-r(s)r^{*}(s))}{X_{l}(s)}\,\mathrm{d}s+\left(\int_{-c_l}^{-c_{r}}+\int_{c_{r}}^{c_l}\right)\frac{i\arg r_{+}(s)}{X_{l+}(s)}\,\mathrm{d}s\right]\right\}.
        \end{equation*}
        By the symmetry relation \eqref{symmetry a,b,r} for $r(k)$ and the fact that $X_{l}(-k)=-X_{l}(k)$ for $k\in(c_l,\infty)$,
        it can be directly verified that $D_{\textup{\uppercase\expandafter{\romannumeral1}}, \infty}(\xi)=1$ 
        for all $\xi\in\mathcal{R}_{\textup{\uppercase\expandafter{\romannumeral1}}}$.
       
        \item[(e)] This follows from the definition of 
        $D_{\textup{\uppercase\expandafter{\romannumeral1}}}$ and 
        the asymptotic expansion of $r(k)$ near the endpoints, as given in 
        \eqref{expansionrk}.
        
        \item[(f)] The singularity at $k = \pm\eta_l$ arise 
        from the logarithmic term $\log(1 - r(s) r^*(s))$ in the integrand. 
        Expanding near $s = \eta_l$, we obtain the leading behavior 
        shown in \eqref{dbr}. The explicit formulae for $d_{\textup{\uppercase\expandafter{\romannumeral1}},\eta}$ 
        and the estimates for $D_{b,r}(k)$ and $D_{b,l}(k)$ are direct consequences 
        derived by Taylor expansion and properties of Cauchy integrals.
\end{itemize}
\end{proof}
\subsubsection{RH problem for $M^{(2)}$}
With the help of $D_{\textup{\uppercase\expandafter{\romannumeral1}}}$ function,
we define
\begin{equation}\label{def:M2 RI}
    M^{(2)}(x,t;k)=M^{(1)}(x,t;k)D_{\textup{\uppercase\expandafter{\romannumeral1}}}^{-\sigma_3}(\xi;k).
\end{equation}
Then RH problem for $M^{(2)}$ reads as follows:
\begin{RHP}
\hfill
\begin{itemize}
    \item $M^{(2)}(k)$ is holomorphic for $k\in\mathbb{C}\backslash\mathbb{R}$.
    \item $M^{(2)}(k)$ has continuous boundary values $M_{\pm}^{(2)}(k)$ on $\mathbb{R}$ with the jump condition
    \begin{equation*}
        M^{(2)}_{+}(k)=M^{(2)}_{-}(k)V^{(2)}(k),
    \end{equation*}
    where
    \begin{equation*}
        V^{(2)}(k)=
            \begin{cases}
            \begin{pmatrix}1-rr^* & -D_{\textup{\uppercase\expandafter{\romannumeral1}}}^2r^*e^{-2itg_\textup{\uppercase\expandafter{\romannumeral1}}}\\ D_{\textup{\uppercase\expandafter{\romannumeral1}}}^{-2}re^{2itg_\textup{\uppercase\expandafter{\romannumeral1}}} & 1\end{pmatrix}, &k\in(-\infty,-\eta_l)\cup(\eta_l,+\infty),  \\
            \begin{pmatrix}1 & -D_{\textup{ \uppercase\expandafter{\romannumeral1}}, +}^{2}r_2e^{-2itg_\textup{\uppercase\expandafter{\romannumeral1}}} \\ D_{\textup{\uppercase\expandafter{\romannumeral1}}, -}^{-2}r_2^*e^{2itg_\textup{\uppercase\expandafter{\romannumeral1}}} & 1-rr^* \end{pmatrix}, &k\in(-\eta_l, -c_l)\cup(c_l,\eta_l),\\
            \begin{pmatrix}0 & -1\\ 1 & D_{\textup{ \uppercase\expandafter{\romannumeral1}}, +}D_{\textup{\uppercase\expandafter{\romannumeral1}}, -}^{-1}e^{-2itg_{\textup{\uppercase\expandafter{\romannumeral1}},+}} \end{pmatrix}, &k\in (-c_l,-c_{r})\cup(c_{r},c_l),\\
            \begin{pmatrix}0 & -1 \\ 1 & 0\end{pmatrix},  &k\in (-c_{r}, c_{r}).
            \end{cases}
    \end{equation*}
    \item As $k\rightarrow\infty$ in $\mathbb{C}\setminus\mathbb{R}$, we have $M^{(2)}(k)=I+\mathcal{O}(k^{-1})$.
    \item As $k\rightarrow\pm c_{l}$, we have $M^{(2)}(k)=\mathcal{O}\left(\left(k\mp c_{l}\right)^{-\frac{1}{4}}\right)$.
\end{itemize}
\end{RHP}
\subsection{Third transformation: $M^{(2)}\rightarrow M^{(3)}$}
The aim of the third transformation is to open lenses in 
regions $U^{(3)}_j,U^{(3)*}_j,\;j=1,2,3$, which are 
illustrated in Figure \ref{fig:U_j domain RI}. 
To this end, we need to construct analytic approximations 
for the spectral functions $r$ and $r_2$.

\subsubsection{Analytic approximations} 
The following two propositions establish the 
analytic approximations for $r$ and $r_2$ in different regions
as illustrated in Figure \ref{fig:U_j domain RI}. Their proofs 
are analogous to those of Lemma 5.2 and Lemma 5.3 in \cite{LenellsDNLS}, respectively, 
and thus are omitted here.
\begin{proposition}[Analytic approximation of $r$]\label{analytic extension of r RI}There exist continuous functions
\begin{equation*}
    r_a: \mathcal{R}_\textup{\uppercase\expandafter{\romannumeral1}}  \times \left(\overline{U_1^{(3)}}\cup\overline{U_3^{(3)}}\right) \rightarrow \mathbb{C} \; \text { and } \; r_r: \mathcal{R}_\textup{\uppercase\expandafter{\romannumeral1}} \times\left((-\infty,-\eta_l)\cup(\eta_l,+\infty)\right) \rightarrow \mathbb{C},
\end{equation*}
which satisfy the following properties:
\begin{itemize}
    \item [\rm (a)] $r(k)=r_a(\xi; k)+r_r(\xi; k)$ for all $(\xi; k) \in \mathcal{R}_\textup{\uppercase\expandafter{\romannumeral1}}  \times\{(-\infty,-\eta_l)\cup(\eta_l,+\infty)\}$.
    \item [\rm (b)] $r_a(-k)=r_a^*(k)$ for $k\in\overline{U_1^{(3)}}\cup\overline{U_3^{(3)}}.$
    \item [\rm (c)] For all $\xi \in \mathcal{R}_\textup{\uppercase\expandafter{\romannumeral1}}$, the function $r_a: U_1^{(3)}\cup U_3^{(3)} \rightarrow \mathbb{C}$ is holomorphic. 
    Moreover, for $(\xi; k)\in \mathcal{R}_\textup{\uppercase\expandafter{\romannumeral1}}\times \left(\overline{U_1^{(3)}}\cup\overline{U_3^{(3)}}\right)$, we have 
\begin{equation*}
    \left|r_a(\xi; k)-r\left(\pm \eta_l\right)-r^{\prime}\left(\pm \eta_l\right)\left(k\mp \eta_l\right)\right| \lesssim \left|k\mp \eta_l\right|^2 \mathrm{e}^{t|\operatorname{Im} g_\textup{\uppercase\expandafter{\romannumeral1}}(\xi;k)|},
\end{equation*}
and
\begin{equation*}
    \left|r_a(\xi; k)\right| \lesssim \frac{\mathrm{e}^{t|\operatorname{Im} g_\textup{\uppercase\expandafter{\romannumeral1}}(\xi; k)|}}{1+|k|^2}.
\end{equation*}
\item [\rm (d)] For all $\xi \in \mathcal{R}_\textup{\uppercase\expandafter{\romannumeral1}}$, the function $r_r \in L^p\left((-\infty,-\eta_l)\cup(\eta_l,+\infty)\right),\; p\in[1,+\infty]$, and as $t \rightarrow +\infty$,
\begin{equation*}
    \left\|r_r(\xi;k)\right\|_{L^p\left((-\infty,-\eta_l)\cup(\eta_l,+\infty)\right)} = \mathcal{O}\left(t^{-1}\right).
\end{equation*}
\end{itemize} 
\end{proposition}                        
\begin{proposition}[Analytic approximation of $r_2$]\label{analytic extension for r_2 RI} There exist continuous functions
    \begin{equation*}
    r_{2, a}: \mathcal{R}_\textup{\uppercase\expandafter{\romannumeral1}}  \times \left(\overline{U_2^{(3)}} \backslash\left\{\pm c_l\right\} \right)\rightarrow \mathbb{C} \; \text { and } \; r_{2, r}: \mathcal{R}_\textup{\uppercase\expandafter{\romannumeral1}} \times\left(\left(-\eta_l, -c_l\right)\cup(c_l,\eta_l) \right)\rightarrow \mathbb{C},
\end{equation*}
which satisfy the following properties:
\begin{itemize}
    \item [\rm (a)]  $r_2(k)=r_{2, a}(\xi; k)+r_{2,r}(\xi;k)$ for all $(\xi; k) \in \mathcal{R}_\textup{\uppercase\expandafter{\romannumeral1}} \times\left(-\eta_l, -c_l\right)\cup(c_l,\eta_l)$.
    \item [\rm (b)] $r_{2,a}(-k)=r_{2,a}^*(k)$ for $k\in\overline{U_2^{(3)}} \backslash\{\pm c_l\}$.
    \item [\rm (c)]  For all $\xi\in \mathcal{R}_\textup{\uppercase\expandafter{\romannumeral1}}$, the function $r_{2, a}: U_2^{(3)} \rightarrow \mathbb{C}$ is holomorphic, and $r_{2, a}(\xi;k)= \mathcal{O}\left(\left|k\mp c_l\right|^{-1 / 2}\right)$ as $k \rightarrow \pm c_l$ respectively. Moreover, for every $\varepsilon>0$, there exists a constant $C(\varepsilon)>0$ such that for $(\xi;k)\in \mathcal{R}_\textup{\uppercase\expandafter{\romannumeral1}}\times\left(\overline{U^{(3)}_2}\setminus D_{\varepsilon}(\pm c_l)\right)$, we have
\begin{equation*}
    \left|r_{2, a}(\xi;k)\right| \leqslant C(\varepsilon )\frac{\mathrm{e}^{t\left|\operatorname{Im} g_{\textup{\uppercase\expandafter{\romannumeral1}},+}(\xi;k)\right|}}{1+|k|^2}.
\end{equation*}
\item [\rm (d)] For all $\xi\in \mathcal{R}_\textup{\uppercase\expandafter{\romannumeral1}}$, the function $r_{2, r} \in L^p \left(\left(-\eta_l, -c_l\right)\cup(c_l,\eta_l)\right),\;p\in[1,+\infty]$, and as $t \rightarrow +\infty$,
\begin{equation*}
    \left\|r_{2, r}(\xi;k)\right\|_{L^p\left(\left(-\eta_l, -c_l\right)\cup(c_l,\eta_l)\right)} = \mathcal{O}\left(t^{-1}\right).                        
  \end{equation*}
\item [\rm (e)] For all $\xi \in \mathcal{R}_\textup{\uppercase\expandafter{\romannumeral1}}$, the function $\left(r r_{2, a}+r^* r_{2, a}^*+1\right) \mathrm{e}^{-2 i t g_{\textup{\uppercase\expandafter{\romannumeral1}},+}}\in L^p\left((-c_l,-c_r)\cup(c_r,c_l)\right),\;p\in[1,+\infty]$, and as $t \rightarrow +\infty$,
\begin{equation*}
    \left\|\left(r r_{2, a}+r ^*r^*_{2, a}+1\right)(\xi;k) \mathrm{e}^{-2 i t g_{\textup{\uppercase\expandafter{\romannumeral1}},+}(\xi;k)}\right\|_{L^p\left((-c_l,-c_r)\cup(c_r,c_l)\right)}=\mathcal{O}\left(t^{-1}\right).
\end{equation*}
\item [\rm (f)] For all $\xi \in \mathcal{R}_\textup{\uppercase\expandafter{\romannumeral1}}$, the function $\left(r_{2,a}+r_{2,a}^*\right) \mathrm{e}^{-2 i t g_{\textup{\uppercase\expandafter{\romannumeral1}},+}}\in L^p\left(-c_r,c_r\right),\;p\in[1,+\infty]$, and as $t \rightarrow +\infty$,
\begin{equation*}
    \left\|\left(r_{2,a}+r_{2,a}^*\right)(\xi;k) \mathrm{e}^{-2 i t g_{\textup{\uppercase\expandafter{\romannumeral1}},+}(\xi;k)}\right\|_{L^p\left(-c_r,c_r\right)}=\mathcal{O}\left(t^{-1}\right).
\end{equation*}
\end{itemize}

\end{proposition}
\begin{figure}[H]
\begin{center}
\tikzset{every picture/.style={line width=0.75pt}} 
\begin{tikzpicture}[x=0.75pt,y=0.75pt,yscale=-1,xscale=1]
 \newcommand{\gap}{12}
\draw    (240,151) -- (412,151) ;
\draw [shift={(332,151)}, rotate = 180] [color={rgb, 255:red, 0; green, 0; blue, 0 }  ][line width=0.75]    (10.93,-3.29) .. controls (6.95,-1.4) and (3.31,-0.3) .. (0,0) .. controls (3.31,0.3) and (6.95,1.4) .. (10.93,3.29)   ;
\draw [shift={(332,151)}, rotate = 180] [color={rgb, 255:red, 0; green, 0; blue, 0 }  ][line width=0.75]    (4.93-5*\gap,-3.29) .. controls (0.95-5*\gap,-1.4) and (-3.31-5*\gap,-0.3) .. (-6-5*\gap,0) .. controls (-3.31-5*\gap,0.3) and (0.95-5*\gap,1.4) .. (4.93-5*\gap,3.29)   ;
\draw [shift={(332,151)}, rotate = 180] [color={rgb, 255:red, 0; green, 0; blue, 0 }  ][line width=0.75]    (4.93+6*\gap,-3.29) .. controls (0.95+6*\gap,-1.4) and (-3.31+6*\gap,-0.3) .. (-6+6*\gap,0) .. controls (-3.31+6*\gap,0.3) and (0.95+6*\gap,1.4) .. (4.93+6*\gap,3.29)   ;
\draw [shift={(332,151)}, rotate = 180] [color={rgb, 255:red, 0; green, 0; blue, 0 }  ][line width=0.75]    (4.93-10*\gap,-3.29) .. controls (0.95-10*\gap,-1.4) and (-3.31-10*\gap,-0.3) .. (-6-10*\gap,0) .. controls (-3.31-10*\gap,0.3) and (0.95-10*\gap,1.4) .. (4.93-10*\gap,3.29)   ;
\draw [shift={(332,151)}, rotate = 180] [color={rgb, 255:red, 0; green, 0; blue, 0 }  ][line width=0.75]    (4.93+11*\gap,-3.29) .. controls (0.95+11*\gap,-1.4) and (-3.31+11*\gap,-0.3) .. (-6+11*\gap,0) .. controls (-3.31+11*\gap,0.3) and (0.95+11*\gap,1.4) .. (4.93+11*\gap,3.29)   ;
\draw [shift={(332,151)}, rotate = 180] [color={rgb, 255:red, 0; green, 0; blue, 0 }  ][line width=0.75]    (4.93-18*\gap,-3.29) .. controls (0.95-18*\gap,-1.4) and (-3.31-18*\gap,-0.3) .. (-6-18*\gap,0) .. controls (-3.31-18*\gap,0.3) and (0.95-18*\gap,1.4) .. (4.93-18*\gap,3.29)   ;
\draw [shift={(332,151)}, rotate = 180] [color={rgb, 255:red, 0; green, 0; blue, 0 }  ][line width=0.75]    (4.93+19*\gap,-3.29) .. controls (0.95+19*\gap,-1.4) and (-3.31+19*\gap,-0.3) .. (-6+19*\gap,0) .. controls (-3.31+19*\gap,0.3) and (0.95+19*\gap,1.4) .. (4.93+19*\gap,3.29)   ;
\draw    (323,51) -- (481,152) ;
\draw [shift={(407.06,104.73)}, rotate = 212.59] [color={rgb, 255:red, 0; green, 0; blue, 0 }  ][line width=0.75]    (10.93,-3.29) .. controls (6.95,-1.4) and (3.31,-0.3) .. (0,0) .. controls (3.31,0.3) and (6.95,1.4) .. (10.93,3.29)   ;
\draw    (481,152) -- (588,222) ;
\draw [shift={(539.52,190.28)}, rotate = 213.19] [color={rgb, 255:red, 0; green, 0; blue, 0 }  ][line width=0.75]    (10.93,-3.29) .. controls (6.95,-1.4) and (3.31,-0.3) .. (0,0) .. controls (3.31,0.3) and (6.95,1.4) .. (10.93,3.29)   ;
\draw    (481,152) -- (332,249) ;
\draw [shift={(412.37,196.68)}, rotate = 146.94] [color={rgb, 255:red, 0; green, 0; blue, 0 }  ][line width=0.75]    (10.93,-3.29) .. controls (6.95,-1.4) and (3.31,-0.3) .. (0,0) .. controls (3.31,0.3) and (6.95,1.4) .. (10.93,3.29)   ;
\draw    (582,86) -- (481,152) ;
\draw [shift={(537.36,115.17)}, rotate = 146.84] [color={rgb, 255:red, 0; green, 0; blue, 0 }  ][line width=0.75]    (10.93,-3.29) .. controls (6.95,-1.4) and (3.31,-0.3) .. (0,0) .. controls (3.31,0.3) and (6.95,1.4) .. (10.93,3.29)   ;
\draw    (171,150) -- (64.26,79.61) ;
\draw [shift={(123.47,118.66)}, rotate = 213.4] [color={rgb, 255:red, 0; green, 0; blue, 0 }  ][line width=0.75]    (10.93,-3.29) .. controls (6.95,-1.4) and (3.31,-0.3) .. (0,0) .. controls (3.31,0.3) and (6.95,1.4) .. (10.93,3.29)   ;
\draw    (171,150) -- (323,51) ;
\draw [shift={(252.03,97.23)}, rotate = 146.92] [color={rgb, 255:red, 0; green, 0; blue, 0 }  ][line width=0.75]    (10.93,-3.29) .. controls (6.95,-1.4) and (3.31,-0.3) .. (0,0) .. controls (3.31,0.3) and (6.95,1.4) .. (10.93,3.29)   ;
\draw    (69.76,215.63) -- (171,150) ;
\draw [shift={(125.41,179.55)}, rotate = 147.05] [color={rgb, 255:red, 0; green, 0; blue, 0 }  ][line width=0.75]    (10.93,-3.29) .. controls (6.95,-1.4) and (3.31,-0.3) .. (0,0) .. controls (3.31,0.3) and (6.95,1.4) .. (10.93,3.29)   ;
\draw    (412,151) -- (591,151) ;
\draw    (61,151) -- (240,151) ;
\draw    (171,150) -- (332,249) ;
\draw [shift={(256.61,202.64)}, rotate = 211.59] [color={rgb, 255:red, 0; green, 0; blue, 0 }  ][line width=0.75]    (10.93,-3.29) .. controls (6.95,-1.4) and (3.31,-0.3) .. (0,0) .. controls (3.31,0.3) and (6.95,1.4) .. (10.93,3.29)   ;
\draw (477,156.4) node [anchor=north west][inner sep=0.75pt]  [font=\tiny]  {$\eta_l$};
\fill (481,151) circle (1.2pt);
\draw (162,156.4) node [anchor=north west][inner sep=0.75pt]  [font=\tiny]  {$-\eta_l$};
\fill (170,151) circle (1.2pt);
\draw (227,153.4) node [anchor=north west][inner sep=0.75pt]  [font=\tiny]  {$-c_{l}$};
\fill (241,151) circle (1.2pt);
\draw (408,153.4) node [anchor=north west][inner sep=0.75pt]  [font=\tiny]  {$c_{l}$};
\fill (412,151) circle (1.2pt);
\draw (358,153.4) node [anchor=north west][inner sep=0.75pt]  [font=\tiny]  {$c_{r}$};
\fill (362,151) circle (1.2pt);
\draw (277,153.4) node [anchor=north west][inner sep=0.75pt]  [font=\tiny]  {$-c_{r}$};
\fill (284,151) circle (1.2pt);
\draw (541,72.4) node [anchor=north west][inner sep=0.75pt]  [font=\tiny]  {$\Gamma _{1}^{( 3)}$};
\draw (549,215.4) node [anchor=north west][inner sep=0.75pt]  [font=\tiny]  {$\Gamma _{1}^{( 3) *}$};
\draw (416,77.4) node [anchor=north west][inner sep=0.75pt]  [font=\tiny]  {$\Gamma _{2}^{( 3)}$};
\draw (216,71.4) node [anchor=north west][inner sep=0.75pt]  [font=\tiny]  {$\Gamma _{3}^{( 3)}$};
\draw (106,79.4) node [anchor=north west][inner sep=0.75pt]  [font=\tiny]  {$\Gamma _{4}^{( 3)}$};
\draw (416,205.4) node [anchor=north west][inner sep=0.75pt]  [font=\tiny]  {$\Gamma _{2}^{( 3) *}$};
\draw (228,209.4) node [anchor=north west][inner sep=0.75pt]  [font=\tiny]  {$\Gamma _{3}^{( 3) *}$};
\draw (107,191.4) node [anchor=north west][inner sep=0.75pt]  [font=\tiny]  {$\Gamma _{4}^{( 3) *}$};
\draw (543,114.4) node [anchor=north west][inner sep=0.75pt]  [font=\tiny]  {$U_{1}^{( 3)}$};
\draw (546,162.4) node [anchor=north west][inner sep=0.75pt]  [font=\tiny]  {$U_{1}^{( 3) *}$};
\draw (314,98.4) node [anchor=north west][inner sep=0.75pt]  [font=\tiny]  {$U_{2}^{( 3)}$};
\draw (85,112.4) node [anchor=north west][inner sep=0.75pt]  [font=\tiny]  {$U_{3}^{( 3)}$};
\draw (317,172.4) node [anchor=north west][inner sep=0.75pt]  [font=\tiny]  {$U_{2}^{( 3) *}$};
\draw (85,163.4) node [anchor=north west][inner sep=0.75pt]  [font=\tiny]  {$U_{3}^{( 3) *}$};
\end{tikzpicture}
\caption{The jump contours $ \Gamma^{(3)}$ and regions $U^{(3)}_j,U^{(3)*}_j,\;j=1,2,3$ of RH problem for $ M^{(3)}$ when $\xi\in\mathcal{R}_\textup{\uppercase\expandafter{\romannumeral1}}$.}\label{fig:U_j domain RI}
\end{center}
\end{figure}

\subsubsection{RH problem for $ M^{(3)}$} 
Now we are ready to define $ M^{(3)}:= M^{(3)}(x,t;k)$ by
\begin{equation}\label{def:M3 RI}
     M^{(3)}(k)=M^{(2)}(k)D_{\textup{\uppercase\expandafter{\romannumeral1}}}^{\sigma_3}(k)G(k)D_{\textup{\uppercase\expandafter{\romannumeral1}}}^{-\sigma_3}(k),
\end{equation}
where
\begin{equation}
    G(k):=
        \begin{cases}
        \begin{pmatrix}1 & 0 \\ -r_ae^{2itg_{\textup{\uppercase\expandafter{\romannumeral1}}}} & 1\end{pmatrix}, &k\in U^{(3)}_1\cup U^{(3)}_3, \\
        \begin{pmatrix}1 & -r_a^{*}e^{-2itg_\textup{\uppercase\expandafter{\romannumeral1}}} \\ 0 & 1\end{pmatrix}, &k\in U_1^{(3)*}\cup U_3^{(3)*}, \\
        \begin{pmatrix}1 & r_{2,a}e^{-2itg_\textup{\uppercase\expandafter{\romannumeral1}}} \\ 0 & 1\end{pmatrix}, &k\in U_2^{(3)},\\
        \begin{pmatrix}1 & 0 \\ r^*_{2,a}e^{2itg_\textup{\uppercase\expandafter{\romannumeral1}}} & 1\end{pmatrix}, & k\in U_2^{(3)*},\\
        I,  &\textnormal{elsewhere}.
        \end{cases}
\end{equation}
Here, the domains $U^{(3)}_j$, $j=1,2,3$ are illustrated in Figure \ref{fig:U_j domain RI}. 

Then RH problem for $ M^{(3)}$ reads as follows:
\begin{RHP}
\hfill
\begin{itemize}
    \item $M^{(3)}(k)$ is holomorphic for $k\in\mathbb{C}\backslash\Gamma^{(3)}$, where $ \Gamma^{(3)}:=\cup_{j=1}^{4}(\Gamma^{(3)}_j\cup\Gamma^{(3)*}_j)\cup\mathbb{R}$; see Figure \ref{fig:U_j domain RI} for an illustration.
    \item $M^{(3)}(k)$ has continuous boundary values $M_{\pm}^{(3)}(k)$ on $k\in \Gamma^{(3)}$ with the jump condition
    \begin{equation*}
        M^{(3)}_{+}(k)= M^{(3)}_{-}(k)V^{(3)}(k),
    \end{equation*}
    where
    \begin{equation}\label{equ:jump V3 RI}
        V^{(3)}(k)=
            \begin{cases}
            \begin{pmatrix} 1 & 0\\ D_{\textup{\uppercase\expandafter{\romannumeral1}}}^{-2}r_ae^{2itg_\textup{\uppercase\expandafter{\romannumeral1}}} & 1\end{pmatrix}, &k\in\Gamma^{(3)}_1\cup\Gamma^{(3)}_4,  \\
            \begin{pmatrix} 1 & -D_{\textup{\uppercase\expandafter{\romannumeral1}}}^{2}r_a^*e^{-2itg_\textup{\uppercase\expandafter{\romannumeral1}}}\\ 0 & 1\end{pmatrix}, & k\in\Gamma^{(3)*}_1\cup\Gamma^{(3)*}_4,  \\
            \begin{pmatrix} 1 & -D_{\textup{\uppercase\expandafter{\romannumeral1}}}^{2}r_{2,a}e^{-2itg_\textup{\uppercase\expandafter{\romannumeral1}}}\\ 0 & 1\end{pmatrix}, & k\in\Gamma^{(3)}_2\cup\Gamma^{(3)}_3, \\
            \begin{pmatrix} 1 & 0 \\ D_{\textup{\uppercase\expandafter{\romannumeral1}}}^{-2}r^*_{2,a}e^{2itg_\textup{\uppercase\expandafter{\romannumeral1}}} & 1\end{pmatrix}, & k\in\Gamma^{(3)*}_2\cup\Gamma^{(3)*}_3, \\
            \begin{pmatrix}
                1-r_rr^*_r&-D_\textup{\uppercase\expandafter{\romannumeral1}}^2r_r^*e^{-2itg_\textup{\uppercase\expandafter{\romannumeral1}}}\\D_\textup{\uppercase\expandafter{\romannumeral1}}^{-2}r_re^{2itg_\textup{\uppercase\expandafter{\romannumeral1}}}&1
            \end{pmatrix},&k\in(-\infty,-\eta_l)\cup(\eta_l,+\infty),\\
            \begin{pmatrix}
                1&-D_{\textup{\uppercase\expandafter{\romannumeral1}},+}^2r_{2,r}e^{-2itg_\textup{\uppercase\expandafter{\romannumeral1}}}\\D_{\textup{\uppercase\expandafter{\romannumeral1}},-}^{-2}r_{2,r}^*e^{2itg_\textup{\uppercase\expandafter{\romannumeral1}}}&1-r_{2,r}r^*_{2,r}\left(1-rr^*\right)^2
            \end{pmatrix},&k\in(-\eta_l,-c_l)\cup(c_l,\eta_l),\\
            \begin{pmatrix}
0&-1\\1&D_{\textup{\uppercase\expandafter{\romannumeral1}},+}D_{\textup{\uppercase\expandafter{\romannumeral1}},-}^{-1}\left(rr_{2,a}+r^*r^*_{2,a}+1\right)e^{-2itg_{\textup{\uppercase\expandafter{\romannumeral1}},+}}\\
            \end{pmatrix},&k\in(-c_l,-c_r)\cup(c_r,c_l),\\  
            \begin{pmatrix}0 & -1 \\ 1 & D^2_{\textup{\uppercase\expandafter{\romannumeral1}},+}\left(r_{2,a}+r^*_{2,a}\right)e^{-2itg_{\textup{\uppercase\expandafter{\romannumeral1}},+}}\end{pmatrix},  & k\in (-c_r,c_r).
            \end{cases}
    \end{equation}
    \item As $k\rightarrow\infty$ in $\mathbb{C}\setminus\Gamma^{(3)}$, we have $ M^{(3)}(k)=I+\mathcal{O}(k^{-1})$.
    \item As $k\rightarrow\pm c_l$, we have $ M^{(3)}(k)=\mathcal{O}\left((k\mp c_l)^{-1/4}\right)$.
\end{itemize}
\end{RHP}
\begin{remark}\rm
    To obtain $V^{(3)}$ for $k\in(-c_l,c_l)$ , we use the facts (a) $D_{\textup{\uppercase\expandafter{\romannumeral1}},+}D_{\textup{\uppercase\expandafter{\romannumeral1}},-}=r_{+}$,
    $r_+r^*_-=1$ for $k\in (-c_l,-c_{r})\cup(c_{r},c_l)$, (b) $D_{\textup{\uppercase\expandafter{\romannumeral1}},+}D_{\textup{\uppercase\expandafter{\romannumeral1}},-}=1$, $r_{+}=-r_{-}^*$ for $k\in (-c_{r}, c_{r})$,
    and (c) $g_{\textup{\uppercase\expandafter{\romannumeral1}}, +}+g_{\textup{\uppercase\expandafter{\romannumeral1}}, -}=0$ for $k\in (-c_l,c_l)$.
\end{remark}

\subsection{Analysis of RH problem for $M^{(3)}$}
It is readily seen that $V^{(3)} \to I$ exponentially 
on the contours $\Gamma^{(3)}_j \cup \Gamma_j^{(3)*}$ for $j=1,\dots,4$ as $t \to +\infty$.
Items (e) and (f) of Proposition \ref{analytic extension for r_2 RI}  
ensure that the $(2,2)$-entry of $V^{(3)}$ in \eqref{equ:jump V3 RI} decays to zero 
on the intervals $(-c_l, -c_r)$, $(c_r, c_l)$, and $(-c_l, c_l)$ as $t\to+\infty$. 
Both of these two facts lead us to consider the following global parametrix.  
\subsubsection{Global parametrix}
\begin{RHP}\label{RHP:Minfty RI}
\hfill
\begin{itemize}
    \item $M^{(\infty)}(k)$ is holomorphic for $k\in\mathbb{C}\backslash[-c_l,c_l]$.
    \item $M^{(\infty)}(k)$ has continuous boundary values $M_{\pm}^{(\infty)}(k)$ on $(-c_l,c_l)$ satisfying the following jump condition
    \begin{align*}
        M_{+}^{(\infty)}(k)=M_{-}^{(\infty)}(k)\begin{pmatrix}
            0 & -1\\
            1 & 0
        \end{pmatrix}.
    \end{align*}
    \item As $k\rightarrow\infty$ in $\mathbb{C}\setminus[-c_l, c_l]$, we have $M^{(\infty)}(k)=I+\mathcal{O}(k^{-1})$.
    \item As $k\rightarrow \pm c_l$, $M^{(\infty)}(k)=\mathcal{O}((k\mp c_l)^{-1/4})$.
\end{itemize}
\end{RHP}
Then the unique solution to $M^{(\infty)}$ is given by.
\begin{align}\label{equ:sol of Minfty}
    M^{(\infty)}(k)=\Delta_l(k),
\end{align}
where $\Delta_{l}(k)$ is defined by \eqref{equ:Delta_j} for $j=l$.

\subsubsection{Local parametrices near saddle points}
Let
\begin{align*}
    D_\varrho(\eta_l)=\left\{k: |k-\eta_l|<\varrho \right\}, \quad D_\varrho(-\eta_l)=\left\{k: |k+\eta_l|<\varrho \right\}
\end{align*}
be two small disks around $\eta_l$ and $-\eta_l$ respectively, where
\begin{equation*}
\varrho<\frac{1}{3}{\rm min}\left\{ |\eta_l-c_l|, |\eta_l+c_l|,   |\eta_l|  \right\}.
\end{equation*}
For $j\in\{r,l\}$, we intend to solve the following local RH problem for $M^{(j)}$.
\begin{RHP}
\hfill
\begin{itemize}
    \item $M^{(j)}(k)$ is holomorphic for $k\in\overline{D_{\varrho}(\pm \eta_l)}\setminus\Gamma^{(j)}$ (``$+$'' for $j=r$, and ``$-$'' for $j=l$),
    where
    \begin{align}\label{def:Gamma^(ell) RI}
        \Gamma^{(r)}:=D_\varrho(\eta_l)\cap \Gamma^{(3)},\quad \Gamma^{(l)}:=D_\varrho(-\eta_l)\cap \Gamma^{(3)}.
    \end{align}
    \item  $M^{(j)}(k)$ has continuous boundary values $M_{\pm}^{(j)}(k)$ on $k\in \Gamma^{(j)}$ with the jump condition
    \begin{align*}
        M^{(j)}_{+}(k)=M^{(j)}_{-}(k)V^{(3)}(k)\big|_{\Gamma^{(j)}},
    \end{align*}
    where $V^{(3)}(k)$ is given by \eqref{equ:jump V3 RI}.
    \item As $k\rightarrow\infty$ in $\mathbb{C}\setminus\Gamma^{(j)}$, we have $M^{(j)}(k)=I+\mathcal{O}(k^{-1})$.
\end{itemize}
\end{RHP}

 The solution $M^{(j)}$ for this local RH problem can be 
constructed by the parabolic cylinder parametrix shown in Appendix
\ref{appendix:pc model} in a standard manner. 
First, we introduce the following change of variables. For $k \in D_{\varrho}(\eta_l)$, we set
\begin{align}
& g_{\textup{\uppercase\expandafter{\romannumeral1}}}(k)=g_{\textup{\uppercase\expandafter{\romannumeral1}}}(\eta_l)+\frac{1}{4t}\zeta_r^2, \quad \zeta_r(k)=t^{1/2}(k-\eta_l)\psi_r(k),
\end{align}
where $\psi_r(k)$ is given by
\begin{equation}
\psi_r(k)=\left[2g_{\textup{\uppercase\expandafter{\romannumeral1}}}^{\prime\prime}(\eta_l)+4\sum_{m=3}^\infty \frac{g_{\textup{\uppercase\expandafter{\romannumeral1}}}^{(m)}(\eta_l)}{m!}(k-\eta_l)^{m-2}\right]^{\frac{1}{2}}.
\end{equation}
Similarly, for $k\in D_{\varrho}(-\eta_l)$, we define
\begin{align}
    & g_{\textup{\uppercase\expandafter{\romannumeral1}}}(k)=-g_{\textup{\uppercase\expandafter{\romannumeral1}}}(\eta_l)-\frac{1}{4t}\zeta_l^2,  \quad \zeta_l(k)=-t^{1/2}(k+\eta_l)\psi_l(k),
\end{align}
where $\psi_l(k)$ is given by
\begin{align}
\psi_l(k)=\left[2g_{\textup{\uppercase\expandafter{\romannumeral1}}}^{\prime\prime}(\eta_l)-4\sum_{m=3}^\infty \frac{g_{\textup{\uppercase\expandafter{\romannumeral1}}}^{(m)}(-\eta_l)}{m!}(k+\eta_l)^{m-2}\right]^{\frac{1}{2}}.
\end{align}
A straightforward computation shows that $g_{\textup{\uppercase\expandafter{\romannumeral1}}}''(\eta_l)=\frac{24\left(-\xi+c_l^2/2\right)}{\sqrt{-\xi-c_l^2/2}}>0$,
therefore $\psi_r(\eta_l)=\psi_l(-\eta_l)=\sqrt{2g_{\textup{\uppercase\expandafter{\romannumeral1}}}''(\eta_l)}>0$. 
Now the change of variables can be summarized as
\begin{equation}\label{change variable-3}
\begin{aligned}
    &k=\eta_l+t^{-1/2}\zeta_r\psi^{-1}_r,&& k\in D_\varrho(\eta_l),\\
    &k=-\eta_l-t^{-1/2}\zeta_l\psi^{-1}_l,&& k\in D_\varrho(-\eta_l).
\end{aligned}
\end{equation}
Recalling the behavior of $D_\textup{\uppercase\expandafter{\romannumeral1}}$ at $\pm\eta_l$ in \eqref{equ:singular behavior of D}, 
the function $D_\textup{\uppercase\expandafter{\romannumeral1}}(k)$ for $k\in D_\varrho(\pm \eta_l)$ can be expressed as
\begin{equation*}
    \begin{aligned}
    D_\textup{\uppercase\expandafter{\romannumeral1}}(k)&=(k-\eta_l)^{i\nu(\eta_l)}D_0D_{1,r}(k), &&k\in D_\varrho( \eta_l),\\
     D_\textup{\uppercase\expandafter{\romannumeral1}}(k)&=(-k-\eta_l)^{-i\nu(\eta_l)}D_0^{-1}D_{1,l}(k),&& k\in D_\varrho( -\eta_l),
\end{aligned}
\end{equation*}
   where
   \begin{align}
      & D_0=t^{-\frac{1}{2}i\nu(\eta_l)}\psi_r(\eta_l)^{-i\nu(\eta_l)}D_{b,r}(\eta_l),\label{D_0 RI}\\
      & D_{1,r}=e^{-i\nu(\eta_l)\log\left(\frac{\psi_r(k)}{\psi_r(\eta_l)}\right)}\frac{D_{b,r}(k)}{D_{b,r}(\eta_l)},&& k\in D_\varrho( \eta_l)\setminus(-\infty,\eta_l],\\
      & D_{1,l}=e^{i\nu(\eta_l)\log\left(\frac{\psi_l(k)}{\psi_l(-\eta_l)}\right)}\frac{D_{b,l}(k)}{D_{b,l}(-\eta_l)},&& k\in D_\varrho(-\eta_l)\setminus[-\eta_l,+\infty).
   \end{align}
   Here, $D_{b,r}(k)$ and $D_{b,l}(k)$ are defined as in \eqref{dbr}.  
   Due to the symmetry of $M(k)$, it suffices to construct the local parametrix near $\eta_l$; the construction near $-\eta_l$ follows in a similar manner.
As $t\to+\infty$, $k(\zeta_r)\to\eta_l$, we obtain
\begin{equation*}
    D_{1,r}\to1,\quad r_a\to r(\eta_l),\quad r_{2,a}\to\frac{\overline{r(\eta_l)}}{1-|r(\eta_l)|^2}.
\end{equation*}
Now we define the following local parametrix
\begin{align}
    &\tilde M^{(r)}(x,t;k)=Y^{(r)}(k)M^{(\textup{PC})}(\zeta_r(k);r(\eta_l))D_0^{-\sigma_3}e^{itg_\textup{\uppercase\expandafter{\romannumeral1}}(\eta_l)\sigma_3}, \label{tilde M(r)} \\
    & Y^{(r)}(k)=M^{(\infty)}(k)e^{-itg_\textup{\uppercase\expandafter{\romannumeral1}}(\eta_l)\sigma_3}D_0^{\sigma_3} \label{equ: Yr(k)}.
\end{align}
Here, $Y^{(r)}(k)$ is an analytic factor for $k\in D_{\varrho}(\eta_l)$, and $M^{(\textup{PC})}$ is the solution to RH problem \ref{RHP: pc parametrix}. 
Furthermore, as $t\to+\infty$, $M^{(\textup{PC})}(\zeta_r(k);r(\eta_l))$ takes the following asymptotics
\begin{equation}\label{MPC}
    M^{(\textup{PC})}(\zeta_r(k);r(\eta_l))=I+\frac{1}{t^{1/2}(k-\eta_l)\psi_r(\eta_l)}\begin{pmatrix}
       0 & -i{\beta}^{(\eta_l)}\\ i\overline{{\beta}^{(\eta_l)}} & 0\end{pmatrix}+\mathcal{O}(t^{-1}),
\end{equation}
where $\beta^{(\eta_l)}$ is defined as in \eqref{beta}. The $\tilde{M}^{(r)}$ defined in \eqref{tilde M(r)} could be viewed as an approximation of $M^{(r)}$ for large $t$,
which takes the jump matrix $\tilde V^{(r)}(k)$ as
\begin{equation}\label{jump matrix tildev3 I}
   \tilde V^{(r)}(k)= \begin{cases}
     \begin{pmatrix} 1 & 0\\ r(\eta_l)e^{\frac{i\zeta_r^2}{2}}\zeta_r^{-2i\nu(\eta_l)}& 1\end{pmatrix}, &k\in\Gamma^{(3)}_1\cap D_\varrho( \eta_l),  \\
            \begin{pmatrix} 1 & -\overline{r(\eta_l)}e^{-\frac{i\zeta_r^2}{2}}\zeta_r^{2i\nu(\eta_l)}\\ 0 & 1\end{pmatrix}, & k\in\Gamma^{(3)*}_1\cap D_\varrho( \eta_l),  \\
            \begin{pmatrix} 1 & -r_{2}(\eta_l)e^{-\frac{i\zeta_r^2}{2}}\zeta_r^{2i\nu(\eta_l)}\\ 0 & 1\end{pmatrix}, & k\in\Gamma^{(3)}_2\cap D_\varrho( \eta_l), \\
            \begin{pmatrix} 1 & 0 \\ \overline{r_{2}(\eta_l)}e^{\frac{i\zeta_r^2}{2}}\zeta_r^{-2i\nu(\eta_l)} & 1\end{pmatrix}, & k\in\Gamma^{(3)*}_2\cap D_\varrho( \eta_l).
\end{cases}
\end{equation}

For $\tilde M^{(l)}$, from the symmetric condition that $M^{(l)}(k)=\sigma_1M^{(r)}(-k)\sigma_1$, we can obtain the following local parametrix near $-\eta_l$ under the change of variable $k\rightarrow\zeta_l$:
\begin{align}
    & \tilde M^{(l)}(x,t;k)=Y^{(l)}(k)\left[\sigma_1M^{(\textup{PC})}(\zeta_l(k);r(\eta_l))\sigma_1\right]D_0^{\sigma_3}e^{-itg_\textup{\uppercase\expandafter{\romannumeral1}}(\eta_l)\sigma_3},\label{tilde M(l)}\\
    & Y^{(l)}(k)=M^{(\infty)}(k)e^{itg_\textup{\uppercase\expandafter{\romannumeral1}}(\eta_l)\sigma_3}D_0^{-\sigma_3}. \label{equ: Yl(k)}
\end{align}

Before we make the final transformation, we insert the following lemma, which describes the matching conditions between the local parametrices $\tilde M^{(j)}$ and the global parametrix $M^{(\infty)}$, and the approximation of the jump matrices.
\begin{lemma}\label{Lemma v3-vr}
    For $\xi\in\mathcal{R}_\textup{\uppercase\expandafter{\romannumeral1}}$ and $t>0$, the function $\tilde M^{(j)}$ has the following properties for $j\in\{l,r\}$:
    \begin{itemize}
        \item [\rm (a)] As $t\to\infty$, 
        \begin{equation*}
                \|\tilde M^{(j)}(k)M^\infty(k)^{-1}-I\|_{L^\infty(\partial D_\varrho(\pm\eta_l))}=\mathcal{O}(t^{-1}).
        \end{equation*}
        Here, the symbol ``$\pm$'' means ``$+$'' for $j=r$ and ``$-$'' for $j=l$.
        \item [\rm (b)] Across $\Gamma^{(j)}$ defined in \eqref{def:Gamma^(ell) RI}, the jump matrix $\tilde V^{(j)}$ of $\tilde M^{(j)}(x,t;k)$ 
        satisfies the following estimates:
        \begin{align*}
            & \|V^{(3)}-\tilde V^{(j)}\|_{L^1(\Gamma^{(j)})}=\mathcal{O}(t^{-1}\log t),\\
            & \|V^{(3)}-\tilde V^{(j)}\|_{L^2(\Gamma^{(j)})}=\mathcal{O}(t^{-\frac{3}{4}}\log t),\\
            & \|V^{(3)}-\tilde V^{(j)}\|_{L^\infty(\Gamma^{(j)})}=\mathcal{O}(t^{-\frac{1}{2}}\log t).
        \end{align*}
    \end{itemize}
\end{lemma}
\begin{proof}
    \hfill
    \begin{itemize}
        \item[(a)] Using the definition of $\tilde M^{(j)}(k)$ in \eqref{tilde M(r)}, \eqref{tilde M(l)}, 
        and the asymptotics of $M^{(\textup{PC})}$ in \eqref{MPC}, we have the desired estimate. 
        \item[(b)] Taking use the item (c) of Proposition \ref{analytic extension of r RI}, item (c) of Proposition \ref{analytic extension for r_2 RI} 
        as well as \eqref{estimate D1}, then the desired estimates could be obtained in a very standard way. 
        We refer to \cite[Lemma 6.3]{Arruda} for a detailed proof.
    \end{itemize}
   
\end{proof}
\subsection{Small norm RH problem for $M^{(err)}$}
Define
\begin{equation}\label{def:Merr RI}
    M^{(err)}(x,t;k):=
        \begin{cases}
        M^{(3)}(x,t;k)\left(M^{(\infty)}\left(x,t;k\right)\right)^{-1}, & k\in \mathbb{C}\setminus\left(D_\varrho(\eta_l)\cup D_\varrho(-\eta_l)\right),\\
         M^{(3)}(x,t;k)\left(\tilde M^{(r)}\left(x,t;k\right)\right)^{-1}, & k\in D_\varrho(\eta_l),\\
         M^{(3)}(x,t;k)\left(\tilde M^{(l)}\left(x,t;k\right)\right)^{-1}, & k\in D_\varrho(-\eta_l).
\end{cases}
\end{equation}

Then $M^{(err)}$ satisfies the following RH problem.
\begin{RHP}\label{RHP: Merr RI}
\hfill
\begin{itemize}
    \item $M^{(err)}(k)$ is holomorphic for $k\in\mathbb{C}\setminus\Gamma^{(err)}$, where
    \begin{align*}
    \Gamma^{(err)}:=\Gamma^{(3)}\cup\partial D_\varrho(\eta_l)\cup\partial D_\varrho(-\eta_l);
    \end{align*}
    see Figure \ref{fig:jump contour Merr RI} for an illustration.
    \item $M^{(err)}(k)$ has continuous boundary values $M^{(err)}_\pm(k)$ on $k\in \Gamma^{(err)}$ with the jump condition
    \begin{equation*}
        M^{(err)}_{+}(k)=M^{(err)}_{-}(k)V^{(err)}(k),
    \end{equation*}
    where
     \begin{align}\label{equ:jump Verr RI}
        V^{(err)}(k)=
                \begin{cases}
                 M^{(\infty)}_-(k)V^{(3)}(k){M^{(\infty)}_+(k)}^{-1}, & k\in\Gamma^{(3)}\backslash \left(\overline{D_\varrho(\eta_l)}\cup \overline{D_\varrho(-\eta_l)}\right),\\
                 \tilde M^{(r)}(k){M^{(\infty)}(k)}^{-1}, & k\in\partial D_\varrho(\eta_l),\\
                \tilde M^{(l)}(k){M^{(\infty)}(k)}^{-1}, & k\in\partial D_\varrho(-\eta_l),\\
                \tilde M^{(r)}_-(k)V^{(3)}(k){\tilde M^{(r)}_+(k)}^{-1}, & k\in\Gamma^{(3)}\cap D_\varrho(\eta_l),\\
                \tilde M^{(l)}_-(k)V^{(3)}(k){\tilde M^{(l)}_+(k)}^{-1}, & k\in\Gamma^{(3)}\cap D_\varrho(-\eta_l).
                \end{cases}    
    \end{align}
    \item As $k\rightarrow\infty$ in $k\in\mathbb{C}\setminus\Gamma^{(err)}$, we have $M^{(err)}(k)=I+\mathcal{O}(k^{-1})$.
    \item As $k\rightarrow\pm c_l$, we have $M^{(err)}(k)=\mathcal{O}(1)$.
\end{itemize}
\end{RHP}

\begin{figure}[htbp]
\begin{center}
\tikzset{every picture/.style={line width=0.75pt}}
\begin{tikzpicture}[x=0.75pt,y=0.75pt,yscale=-1,xscale=1]

\def\yax{144}
\def\R{25}
\def\h{50}
\def\L{100}
\def\V{65}

\coordinate (OL) at (254,\yax);
\coordinate (OR) at (410,\yax);
\def\xmid{332}
\coordinate (U)  at (\xmid, \yax-\h);
\coordinate (D)  at (\xmid, \yax+\h);

\coordinate (Aul) at ($(OL)+(-\L,-\V)$);  
\coordinate (Adl) at ($(OL)+(-\L, \V)$);  
\coordinate (Aur) at ($(OR)+(\L,-\V)$);   
\coordinate (Adr) at ($(OR)+(\L, \V)$);   

\draw (93,\yax) -- (569,\yax);
\draw [shift={(569,\yax)}, rotate = 180]
  (10.93,-3.29) .. controls (6.95,-1.4) and (3.31,-0.3) .. (0,0)
  .. controls (3.31,0.3) and (6.95,1.4) .. (10.93,3.29);

\draw (290,\yax) -- (372,143.5);
\fill (290,\yax) circle (1.2pt);
\fill (312,\yax) circle (1.2pt);
\fill (372,143.5) circle (1.2pt);
\fill (350,143.5) circle (1.2pt);
\draw [shift={(337,143.7)}, rotate =18 0]
  (10.93,-3.29) .. controls (6.95,-1.4) and (3.31,-0.3) .. (0,0)
  .. controls (3.31,0.3) and (6.95,1.4) .. (10.93,3.29);

\draw (574,136.4) node [anchor=north west] {\normalsize $\re k$};
\draw (282,148.4) node [anchor=north west] [font=\tiny] {$-c_{l}$};
\draw (306,148.4) node [anchor=north west] [font=\tiny] {$-c_{r}$};
\draw (346,148.4) node [anchor=north west] [font=\tiny] {$c_{r}$};
\draw (368,148.4) node [anchor=north west] [font=\tiny] {$c_{l}$};

\draw (Aul) -- (OL) -- (U) -- (OR) -- (Aur);
\draw (Adl) -- (OL) -- (D) -- (OR) -- (Adr);

\draw (OL) circle (\R);
\draw (OR) circle (\R);

\fill (OL) circle (1.2pt);
\draw (245,148.4) node [anchor=north west] [font=\tiny] {$-\eta_l$};
\fill (OR) circle (1.2pt);
\draw (406,148.4) node [anchor=north west] [font=\tiny] {$\eta_l$};

\draw [shift={(204,111.5)}, rotate = 213.0]
  (10.93,-3.29) .. controls (6.95,-1.4) and (3.31,-0.3) .. (0,0)
  .. controls (3.31,0.3) and (6.95,1.4) .. (10.93,3.29);
\draw [shift={(293,119)}, rotate = 147.2]
  (10.93,-3.29) .. controls (6.95,-1.4) and (3.31,-0.3) .. (0,0)
  .. controls (3.31,0.3) and (6.95,1.4) .. (10.93,3.29);
\draw [shift={(371,119)}, rotate = 212.8]
  (10.93,-3.29) .. controls (6.95,-1.4) and (3.31,-0.3) .. (0,0)
  .. controls (3.31,0.3) and (6.95,1.4) .. (10.93,3.29);
\draw [shift={(460,111.5)}, rotate = 147]
  (10.93,-3.29) .. controls (6.95,-1.4) and (3.31,-0.3) .. (0,0)
  .. controls (3.31,0.3) and (6.95,1.4) .. (10.93,3.29);

\draw [shift={(204,176.5)}, rotate = 147]
  (10.93,-3.29) .. controls (6.95,-1.4) and (3.31,-0.3) .. (0,0)
  .. controls (3.31,0.3) and (6.95,1.4) .. (10.93,3.29);
\draw [shift={(293,169)}, rotate = 212.8]
  (10.93,-3.29) .. controls (6.95,-1.4) and (3.31,-0.3) .. (0,0)
  .. controls (3.31,0.3) and (6.95,1.4) .. (10.93,3.29);
\draw [shift={(371,169)}, rotate = 147.2]
  (10.93,-3.29) .. controls (6.95,-1.4) and (3.31,-0.3) .. (0,0)
  .. controls (3.31,0.3) and (6.95,1.4) .. (10.93,3.29);
\draw [shift={(460,176.5)}, rotate = 213.0]
  (10.93,-3.29) .. controls (6.95,-1.4) and (3.31,-0.3) .. (0,0)
  .. controls (3.31,0.3) and (6.95,1.4) .. (10.93,3.29);

\draw [shift={(254,119)}, rotate = 170]
  (10.93,-3.29) .. controls (6.95,-1.4) and (3.31,-0.3) .. (0,0)
  .. controls (3.31,0.3) and (6.95,1.4) .. (10.93,3.29);

\draw [shift={(410,119)}, rotate = 170]
  (10.93,-3.29) .. controls (6.95,-1.4) and (3.31,-0.3) .. (0,0)
  .. controls (3.31,0.3) and (6.95,1.4) .. (10.93,3.29);


\end{tikzpicture}
\caption{The jump contours $\Gamma^{(err)}$ of RH problem $M^{(err)}$ for $\xi\in\mathcal{R}_\textup{\uppercase\expandafter{\romannumeral1}}$.}\label{fig:jump contour Merr RI}
\end{center}
\end{figure}
Following from item (c) of Proposition \ref{analytic extension of r RI}, items (c), (d), (f) of Proposition \ref{analytic extension for r_2 RI} 
as well as Lemma \ref{Lemma v3-vr}, we obtain for $p=1,2,\infty$,
\begin{align}\label{est:Verr-I RI}
    \Vert V^{(err)}(k)-I \Vert_{L^p}=
        \begin{cases}
         \mathcal{O}(e^{-ct}), & k\in\Gamma^{(err)}\setminus\left(\overline{D_\varrho(\eta_l)}\cup \overline{D_\varrho(-\eta_l)}\cup\mathbb{R}\right),\\
         \mathcal{O}(t^{-1/2}), &k\in\partial D_\varrho(\eta_l)\cup \partial D_\varrho(-\eta_l),\\
         \mathcal{O}(t^{-1}), &k\in\mathbb{R},\\
         \mathcal{O}(t^{\omega_p}\log t), &k\in \Gamma^{(err)}\cap \left(D_\varrho(\eta_l)\cup D_\varrho(-\eta_l)\right).
        \end{cases}
\end{align}
where $c>0$, $\omega_1=-1$,  $\omega_2=-3/4$ and   $\omega_\infty=-1/2$.

It then follows from the small norm RH problem theory \cite{DZ02} that there exists a unique solution to RH problem \ref{RHP: Merr RI}
for large positive $t$. Moreover, according to \cite{BCdecom}, the solution could be given by
\begin{equation}\label{equ:BCsolforError}
    M^{(err)}(k)=I+\frac{1}{2\pi i}\int_{\Gamma^{(err)}}\frac{\mu(s)(V^{(err)}(s)-I)}{s-k}\dif s,
\end{equation}
where $\mu\in I+L^{2}(\Gamma^{(err)})$ is the unique solution to the Fredholm type equation
\begin{align}\label{equ:fredholm ope equ}
    \mu=I+C_{(err)}\mu.
\end{align}
Here, $C_{(err)}:L^2(\Gamma^{(err)})\rightarrow L^2(\Gamma^{(err)})$ is an integral operator
defined by $C_{(err)}f(z)=C_{-}(f(V^{(err)}(z)-I))$ with $C_{-}$ being the Cauchy projection
operator on $\Gamma^{(err)}$. Therefore we have
\begin{equation}\label{est:mu-I RI}
    \Vert C_{(err)}\Vert \leqslant \Vert C_{-} \Vert_{L^{2}(\Gamma^{(err)})\rightarrow L^{2}(\Gamma^{(err)})}\Vert V^{(err)}-I \Vert_{L^\infty(\Gamma^{(err)})}
    = \mathcal{O}(t^{-1/2}\log t),
\end{equation}
which implies that $I-C_{(err)}$ is invertible for sufficiently large $t$. 
Thus, the solution $\mu$ to \eqref{equ:fredholm ope equ} exists uniquely, and
\begin{align}
    \Vert\mu-I \Vert_{L^2(\Gamma^{(err)})}=\mathcal{O}(t^{-1/2}).
\end{align}

For later use, we conclude this section with behavior of $M^{(err)}(k)$ at $k=\infty$.
By \eqref{equ:BCsolforError}, it follows that
\begin{equation*}
    M^{(err)}(k)=I+\frac{M^{(err)}_1}{k}+\mathcal{O}(k^{-2}), \quad k\rightarrow\infty,
\end{equation*}
where
\begin{equation}\label{equ:Merr_1 RI}
    M^{(err)}_1=-\frac{1}{2\pi i}\int_{\Gamma^{(err)}}\mu(s)(V^{(err)}(s)-I)\dif s.
\end{equation}
\begin{proposition}\label{prop: result of Merr_1 RI}
    With $M^{(err)}_1$ defined in \eqref{equ:Merr_1 RI}, we have, as $t\rightarrow+\infty$,
    \begin{align}\label{equ: Merr_1 RI ttoinfty}
        M^{(err)}_1&=\frac{t^{-1/2}}{\sqrt{2g_{\textup{\uppercase\expandafter{\romannumeral1}}}''(\eta_{l})}}\left[Y^{(r)}(\eta_{l})A^{(\eta_l)}Y^{(r)}(\eta_{l})^{-1}
        -Y^{(l)}(-\eta_{l})A^{(-\eta_l)}Y^{(l)}(-\eta_{l})^{-1}\right]+\mathcal{O}(t^{-1}\log t),
    \end{align}
    where 
    \begin{equation*}
        A^{(\eta_l)}=\begin{pmatrix}
       0 & -i{\beta}^{(\eta_l)}\\ i\overline{{\beta}^{(\eta_l)}} & 0\end{pmatrix},\quad A^{(-\eta_l)}=\begin{pmatrix}
       0 &i\overline{{\beta}^{(\eta_l)}}\\ -i{\beta}^{(\eta_l)} & 0\end{pmatrix}.
    \end{equation*}
 Moreover, the $(1, 2)$-entry of matrix $M^{(err)}_1$ can be represented as follows
    \begin{align}\label{equ:error func 12 RI}
     \left(M^{(err)}_1\right)_{12}=-it^{-\frac{1}{2}}\sqrt{\frac{2}{g_\textup{\uppercase\expandafter{\romannumeral1}}''(\eta_l)}}\mathrm{Re}\left(\beta^{(\eta_l)}e^{-2itg_\textup{\uppercase\expandafter{\romannumeral1}}(\eta_l)}D_0^2\right),
    \end{align}
    where $D_0$, $\beta^{(\eta_l)}$ and $g_\textup{\uppercase\expandafter{\romannumeral1}}$ are defined as in \eqref{D_0 RI}, \eqref{beta} and \eqref{equ:g function RI} respectively.
\end{proposition}
\begin{proof}
    Let us divide $M^{(err)}_1$ into five parts by
    \begin{align*}
     &I_1:=-\frac{1}{2\pi i}\int_{\Gamma^{(err)}}\left(\mu(s)-I\right)\left(V^{(err)}(s)-I\right)\dif s, \\
     &I_2:=-\frac{1}{2\pi i}\int_{\Gamma^{(err)}\setminus\left(\overline{D_\varrho(\eta_l)}\cup \overline{D_\varrho(-\eta_l)}\cup\mathbb{R}\right)}\left(V^{(err)}(s)-I\right)\dif s, \\
     &I_3:=-\frac{1}{2\pi i}\int_{\mathbb{R}}\left(V^{(err)}(s)-I\right)\dif s,\\
     &I_4:=-\frac{1}{2\pi i}\int_{\Gamma^{(err)}\cap \left(D_\varrho(\eta_l)\cup D_\varrho(-\eta_l)\right)}\left(V^{(err)}(s)-I\right)\dif s,\\
     &I_5:=-\frac{1}{2\pi i}\oint_{\partial D_\varrho(\eta_l)\cup \partial D_\varrho(-\eta_l)}\left(V^{(err)}(s)-I\right)\dif s.
    \end{align*}
    It follows from \eqref{est:Verr-I RI} and \eqref{est:mu-I RI} that $I_1=\mathcal{O}(t^{-1}\log t)$, $I_2=\mathcal{O}(e^{-ct})$, $I_3=\mathcal{O}(t^{-1})$ and $I_4=\mathcal{O}(t^{-1}\log t)$
    with some positive constant $c$. Following from \eqref{equ:sol of Minfty}, \eqref{MPC} and \eqref{equ:jump Verr RI},
    it obtains that
    \begin{align}
        I_{5}&=-\frac{1}{2\pi i}\oint_{\partial D_\varrho(\eta_l)}\frac{t^{-1/2}}{\sqrt{2g_{\textup{\uppercase\expandafter{\romannumeral1}}}''(\eta_{l})}(s-\eta_{l})}Y^{(r)}(s)A^{(\eta_l)}Y^{(r)}(s)^{-1}\dif s\nonumber\\
        &+\frac{1}{2\pi i}\oint_{\partial D_\varrho(-\eta_l)}\frac{1}{\sqrt{2g_{\textup{\uppercase\expandafter{\romannumeral1}}}''(\eta_{l})}(s+\eta_{l})}Y^{(l)}(s)A^{(-\eta_l)}Y^{(l)}(s)^{-1}(s)\dif s+\mathcal{O}(t^{-1}) \nonumber\\
        &=\frac{t^{-1/2}}{\sqrt{2g_{\textup{\uppercase\expandafter{\romannumeral1}}}''(\eta_{l})}}\left[Y^{(r)}(\eta_{l})A^{(\eta_l)}Y^{(r)}(\eta_{l})^{-1}
        -Y^{(l)}(-\eta_{l})A^{(-\eta_l)}Y^{(l)}(-\eta_{l})^{-1}\right]+\mathcal{O}(t^{-1}),\nonumber
    \end{align}
    where the last equality follows from the residue theorem. 
    Recalling the definitions of $Y^{(r)}$ in \eqref{equ: Yr(k)} and $Y^{(l)}$ in \eqref{equ: Yl(k)}, together with $M^{(\infty)}(-\eta_l)=M^{(\infty)}(\eta_{l})^{-1}$ and \eqref{g_1'}, we arrive at \eqref{equ:error func 12 RI}.
\end{proof}

\subsection{Proof of the part (\textup{\uppercase\expandafter{\romannumeral1}}) of Theorem \ref{thm:mainthm}}
By tracing back the transformations \eqref{def:M1 RI}, \eqref{def:M2 RI}, \eqref{def:M3 RI} and \eqref{def:Merr RI},
we conclude, for $k\in\mathbb{C}\setminus\Gamma^{(3)}$,
\begin{equation}  M(k)=M^{(err)}M^{(\infty)}D_{\textup{\uppercase\expandafter{\romannumeral1}}}^{\sigma_3}(k)e^{-it(g_{\textup{\uppercase\expandafter{\romannumeral1}}}-\theta)\sigma_3},
\end{equation}
where $M^{(err)}$ and $M^{(\infty)}$ are defined in
\eqref{def:Merr RI} and \eqref{equ:sol of Minfty} respectively.
From the reconstruction formula stated in \eqref{equ:recovering formula}, we obtain that
\begin{equation*}
q(x,t)=2i\left[(M^{(err)}_1)_{12}+\lim_{k\rightarrow\infty}k\left(\Delta_{l}(k)\right)_{12}\right].
\end{equation*}
It now follows from \eqref{equ:Delta_j} and \eqref{equ:error func 12 RI} that part (\textup{\uppercase\expandafter{\romannumeral1}}) of Theorem \ref{thm:mainthm} holds.

\section{Asymptotic analysis of the RH problem for $M$ in $\mathcal{R}_{\textup{\uppercase\expandafter{\romannumeral2}}}$}\label{sec:asymptotic analysis in RII}

\subsection{First transformation: $M\rightarrow M^{(1)}$}
\subsubsection{The $g$-function}
For $\xi\in\mathcal{R}_{\textup{\uppercase\expandafter{\romannumeral2}}}$, we introduce
\begin{equation}\label{equ:g function RII}
    g_{\textup{\uppercase\expandafter{\romannumeral2}}}(\xi;k):=4X^3_\eta(k), \quad X_{\eta}(\xi;k):=\sqrt{k^2-\eta^2(\xi)},
\end{equation}
with $\eta(\xi):=\sqrt{-2\xi}\in(c_{r}, c_l)$. The branch of the square root is chosen such that $X_\eta(k)=k+\mathcal{O}(k^{-1})$ as $k\rightarrow\infty$.

It's readily verified that the following properties for $g_{\textup{\uppercase\expandafter{\romannumeral2}}}$ defined in \eqref{equ:g function RII} hold true.
\begin{proposition}
    The function $g_{\textup{\uppercase\expandafter{\romannumeral2}}}$ defined in \eqref{equ:g function RII} satisfies the following properties:
\begin{itemize}
    \item $g_{\textup{\uppercase\expandafter{\romannumeral2}}}(\xi;k)$ is holomorphic for $k\in\mathbb{C}\backslash[-\eta(\xi),\eta(\xi)]$.
    \item For $k\in(-\eta,\eta)$, $g_{\textup{\uppercase\expandafter{\romannumeral2}},+}(\xi;k)+g_{\textup{\uppercase\expandafter{\romannumeral2}},-}(\xi;k)=0$.
    \item As $k\rightarrow\infty$ in $\mathbb{C}\backslash[-\eta(\xi),\eta(\xi)]$, we have $g_{\textup{\uppercase\expandafter{\romannumeral2}}}(\xi;k)=\theta(\xi;k)+\mathcal{O}(k^{-1})$.
    \item As $k\to\pm\eta$, 
    \begin{equation}
        g_{\textup{\uppercase\expandafter{\romannumeral2}}}(\xi;k)=4( 2\eta)^{3/2}(\pm k-\eta)^{3/2}\left[1\pm\frac{3(k\mp\eta)}{4\eta}+\mathcal{O}((k\mp\eta)^{2})\right].
    \end{equation}
\end{itemize}
\end{proposition}
The $k$-derivative of function $g_{\textup{\uppercase\expandafter{\romannumeral2}}}$ can be expressed by
\begin{equation*}   g_{\textup{\uppercase\expandafter{\romannumeral2}}}'(k)=12k\sqrt{k^2-\eta^2}.
\end{equation*}
The signature table of $\im g_{\textup{\uppercase\expandafter{\romannumeral2}}}$ is illustrated in Figure \ref{fig:signs img RII}.
\begin{figure}[htbp]
\begin{center}
\tikzset{every picture/.style={line width=0.75pt}} 
\begin{tikzpicture}[x=0.75pt,y=0.75pt,yscale=-1,xscale=1]
\draw    (415,48) .. controls (389,48) and (388,229) .. (421,229) ;
\draw    (161,139) -- (477,139) ;
\draw    (205,48) .. controls (230,49) and (230,229) .. (202,230) ;
\draw (205,142.4) node [anchor=north west][inner sep=0.75pt]  [font=\tiny]  {$-\eta $};
\fill (223,139) circle (1.2pt);
\draw (399,141.4) node [anchor=north west][inner sep=0.75pt]  [font=\tiny]  {$\eta $};
\fill (396,139) circle (1.2pt);
\draw (171,145.4) node [anchor=north west][inner sep=0.75pt]  [font=\tiny]  {$-c_{l}$};
\draw (424,143.4) node [anchor=north west][inner sep=0.75pt]  [font=\tiny]  {$c_{l}$};
\draw (257.18,144.13) node [anchor=north west][inner sep=0.75pt]  [font=\tiny,rotate=-1.56]  {$-c_{r}$};
\draw (348,143.4) node [anchor=north west][inner sep=0.75pt]  [font=\tiny]  {$c_{r}$};
\draw (443,89.4) node [anchor=north west][inner sep=0.75pt]    {$+$};
\draw (448,170.4) node [anchor=north west][inner sep=0.75pt]    {$-$};
\draw (296,77.4) node [anchor=north west][inner sep=0.75pt]    {$-$};
\draw (301,189.4) node [anchor=north west][inner sep=0.75pt]    {$+$};
\draw (172,172.4) node [anchor=north west][inner sep=0.75pt]    {$-$};
\draw (175,90.4) node [anchor=north west][inner sep=0.75pt]    {$+$};

\fill (179,139) circle (1.2pt);

\fill (265,139) circle (1.2pt);

\fill (427,139) circle (1.2pt);
\fill (351,139) circle (1.2pt);
\end{tikzpicture}
\caption{ Signature table of  $\im g_{\textup{\uppercase\expandafter{\romannumeral2}}}(\xi;k)$ for $\xi\in\mathcal{R}_{\textup{\uppercase\expandafter{\romannumeral2}}}$.}\label{fig:signs img RII}
\end{center}
\end{figure}

\subsubsection{RH problem for $M^{(1)}$}
With the help of $g_{\textup{\uppercase\expandafter{\romannumeral2}}}$ defined in \eqref{equ:g function RII}, we introduce a
new matrix-valued function $M^{(1)}$ by
\begin{equation}\label{def:M1 RII}
    M^{(1)}(x,t;k)=M(x,t;k)e^{it\left(g_{\textup{\uppercase\expandafter{\romannumeral2}}}(\xi;k)-\theta(\xi;k)\right)\sigma_3}.
\end{equation}
Then the RH problem for $M^{(1)}$ reads as follows:
\begin{RHP}\label{RHP:M1 RII}
    \hfill
\begin{itemize}
    \item $M^{(1)}(k)$ is holomorphic for $k\in\mathbb{C}\backslash\mathbb{R}$.
    \item $M^{(1)}(k)$ has continuous boundary values $M_{\pm}^{(1)}(k)$ on $\mathbb{R}$ with the jump condition
    \begin{equation*}
        M^{(1)}_{+}(k)=M^{(1)}_{-}(k)V^{(1)}(k),
    \end{equation*}
    where
    \begin{equation*}
        V^{(1)}(k)=
            \begin{cases}
            \begin{pmatrix}1-rr^* & -r^*e^{-2itg_{\textup{\uppercase\expandafter{\romannumeral2}}}}\\ re^{2itg_{\textup{\uppercase\expandafter{\romannumeral2}}}} & 1\end{pmatrix}, & k\in(-\infty,-c_l)\cup(c_l,+\infty),  \\
            \begin{pmatrix}0 & -r_{-}^*e^{-2itg_{\textup{\uppercase\expandafter{\romannumeral2}}}}\\ r_{+}e^{2itg_{\textup{\uppercase\expandafter{\romannumeral2}}}} & 1 \end{pmatrix}, & k\in (-c_l,-\eta)\cup(\eta,c_l),\\
            \begin{pmatrix}0 & -r_{-}^{*} \\ r_{+} & e^{-2itg_{\textup{\uppercase\expandafter{\romannumeral2}},+}}\end{pmatrix}, & k\in (-\eta,-c_{r})\cup(c_{r},\eta),  \\
            \begin{pmatrix}0 & -1 \\ 1 & 0\end{pmatrix},  & k\in (-c_{r}, c_{r}).
            \end{cases} 
    \end{equation*}
    \item As $k\rightarrow\infty$ in $\mathbb{C}\backslash\mathbb{R}$, $M^{(1)}(k)=I+\mathcal{O}(k^{-1})$.
    \item $M^{(1)}(k)$ admits the same singular behavior as $M(k)$ at branch points.
\end{itemize}
\end{RHP}

\subsection{Second transformation: $M^{(1)}\rightarrow M^{(2)}$}
\subsubsection{The $D_{\textup{\uppercase\expandafter{\romannumeral2}}}$ function}
Similar to \eqref{equ:def D function RI}, we define an auxiliary function $D_{\textup{\uppercase\expandafter{\romannumeral2}}}(\xi;k):\mathcal{R}_{\textup{\uppercase\expandafter{\romannumeral2}}}\times\mathbb{C}\setminus[-\eta,\eta]\to\mathbb{C}$ by
\begin{equation}\label{equ:def D function RII}
D_{\textup{\uppercase\expandafter{\romannumeral2}}}(\xi;k):=\exp\left\{\frac{X_{\eta}(k)}{2\pi i}\left(\int_{-\eta}^{-c_{r}}+\int_{c_{r}}^{\eta}\right)\frac{\log r_{+}(s)}{X_{\eta+}(s)(s-k)}\dif s\right\}.
\end{equation}
The following proposition presents the necessary properties for $D_{\textup{\uppercase\expandafter{\romannumeral2}}}$.
\begin{proposition}\label{prop: D function RII}
    The function $D_{\textup{\uppercase\expandafter{\romannumeral2}}}$ defined in \eqref{equ:def D function RII} satisfies the following properties for $\xi\in \mathcal{R}_{\textup{\uppercase\expandafter{\romannumeral2}}}$:
\begin{itemize}
    \item[\rm (a)]$D_{\textup{\uppercase\expandafter{\romannumeral2}}}(k)$ is holomorphic for $k\in\mathbb{C}\setminus[-\eta,\eta]$.
    \item[\rm (b)]$D_{\textup{\uppercase\expandafter{\romannumeral2}}}(k)$ satisfies the jump relations:
    \begin{equation*}
        \begin{aligned}
       & D_{\textup{\uppercase\expandafter{\romannumeral2}},+}(k)D_{\textup{\uppercase\expandafter{\romannumeral2}},-}(k)=r_{+}(k), && k\in (-\eta,-c_{r})\cup(c_{r},\eta),\\
       & D_{\textup{\uppercase\expandafter{\romannumeral2}},+}(k)D_{\textup{\uppercase\expandafter{\romannumeral2}},-}(k)=1, && k\in (-c_{r}, c_{r}),\\
       & D_{\textup{\uppercase\expandafter{\romannumeral2}},+}(k)=D_{\textup{\uppercase\expandafter{\romannumeral2}},-}(k), && \textnormal{elsewhere}.
        \end{aligned}
    \end{equation*}
    \item[\rm (c)]As $k\rightarrow \infty$ in $\mathbb{C}\setminus[-\eta,\eta]$, $D_{\textup{\uppercase\expandafter{\romannumeral2}}}(\xi;k)=1+\mathcal{O}(k^{-1})$.
    \item[\rm (d)]$D_{\textup{\uppercase\expandafter{\romannumeral2}}}(k)$ shows the following behavior at endpoints $\pm c_r$
    \begin{equation}\label{Dasyeta}
        D_{\textup{\uppercase\expandafter{\romannumeral2}}}(k)=(\pm k- c_r)^{-\frac{\arg b_{r,0}}{2\pi}}e^{d_{\textup{\uppercase\expandafter{\romannumeral2}},r}}\left(1+\mathcal{O}\left((\pm k-c_r)^{1/2}\right)\right), \quad k\rightarrow\pm c_{r},\; k\in\mathbb{C}^{+} ,
    \end{equation}
    where
    \begin{equation*}
        d_{\textup{\uppercase\expandafter{\romannumeral2}},r}=\frac{\arg b_{r,0}}{2 \pi}\log (c_r-\eta)+\frac{\sqrt{\eta^2-c^2_r}}{2\pi}\int_{-\eta}^{-c_r}\frac{\log r_+(s)}{X_{\eta +}(s)(s+c_r)}\dif s,
    \end{equation*}
    with the principal branch being used for the complex power functions and the logarithms.
  \item[\rm (e)] As $k$ approaching $\pm\eta$ non-tangentially, $D_{\textup{\uppercase\expandafter{\romannumeral2}}}(k)$ has the following asymptotics
  \begin{align}
      &D_{\textup{\uppercase\expandafter{\romannumeral2}}}(k)=\sqrt{r_+(\eta)}\left[1+C_{\eta}\sqrt{k-\eta}+\frac{C_{\eta}^2+\frac{r_+^{\prime}(\eta)}{r_+(\eta)}}{2}(k-\eta)+\mathcal{O}((k-\eta)^{\frac{3}{2}})\right],&& k\to\eta,\\
     & D_{\textup{\uppercase\expandafter{\romannumeral2}}}(k)=\frac{1}{\sqrt{r_+(\eta)}}\left[1-C_{\eta}\sqrt{-k-\eta}+\frac{C_{\eta}^2-\frac{r_+^{\prime}(\eta)}{r_+(\eta)}}{2}(-k-\eta)+\mathcal{O}((-k-\eta)^{\frac{3}{2}})\right],&& k\to-\eta,
  \end{align}
  where
  \begin{equation}\label{C_eta}
      C_{\eta}=-\frac{\sqrt{2\eta}}{2\pi }\left[\int_{c_r}^\eta\frac{\frac{\log r_+(s)}{\sqrt{\eta+s}}-\frac{\log r_+(\eta)}{\sqrt{2\eta}}}{(s-\eta)\sqrt{\eta-s}}\dif s+\frac{2\log r_+(\eta)}{\sqrt{2\eta(\eta-c_r)}}+i\int_{-\eta}^{-c_r}\frac{\log r_+(s)}{X_{\eta_+}(s)(s-\eta)}\dif s\right]\in i\mathbb{R}.
  \end{equation}
\end{itemize}
\end{proposition}
\begin{proof}
    The proofs of properties (a)--(e) are similar to those for $D_{\textup{\uppercase\expandafter{\romannumeral1}}}$ in Proposition \ref{prop: D function RI}, and thus we omit the details here.
\end{proof}

\subsubsection{RH problem for $M^{(2)}$}
Let us define a new matrix-valued function $M^{(2)}$ by
\begin{equation}\label{def:M2 RII}
    M^{(2)}(x,t;k)=M^{(1)}(x,t;k)D_{\textup{\uppercase\expandafter{\romannumeral2}}}^{-\sigma_3}(\xi;k).
\end{equation}
Then RH problem for $M^{(2)}$ reads as follows:
\begin{RHP}
    \hfill
\begin{itemize}
    \item $M^{(2)}(k)$ is holomorphic for $k\in\mathbb{C}\backslash\mathbb{R}$.
    \item $M^{(2)}(k)$ has continuous boundary values $M_{\pm}^{(2)}(k)$ on $\mathbb{R}$ with the jump condition
    \begin{equation*}
        M^{(2)}_{+}(k)=M^{(2)}_{-}(k)V^{(2)}(k),
    \end{equation*}
    where
    \begin{equation*}
        V^{(2)}(k)=
            \begin{cases}
            \begin{pmatrix}1-rr^* & -D_{\textup{\uppercase\expandafter{\romannumeral2}}}^2r^*e^{-2itg_{\textup{\uppercase\expandafter{\romannumeral2}}}}\\ D_{\textup{\uppercase\expandafter{\romannumeral2}}}^{-2}re^{2itg_{\textup{\uppercase\expandafter{\romannumeral2}}}} & 1\end{pmatrix}, &k\in(-\infty,-\eta)\cup(\eta,+\infty),  \\
            \begin{pmatrix}0 & -1\\ 1 & D_{\textup{\uppercase\expandafter{\romannumeral2}},+}D_{\textup{\uppercase\expandafter{\romannumeral2}},-}^{-1}e^{-2itg_{\textup{\uppercase\expandafter{\romannumeral2}},+}} \end{pmatrix}, & k\in (-\eta,-c_{r})\cup(c_{r},\eta),\\
            \begin{pmatrix}0 & -1 \\ 1 & 0\end{pmatrix},  & k\in (-c_{r}, c_{r}).
            \end{cases}
    \end{equation*}
    \item As $k\rightarrow\infty$ in $\mathbb{C}\setminus\mathbb{R}$, we have $M^{(2)}(k)=I+\mathcal{O}(k^{-1})$.
\end{itemize}
\end{RHP}
\subsection{Third transformation: $M^{(2)}\rightarrow M^{(3)}$}
In order to open lenses, we also need to perform the analytic approximation for $r(k)$ on regions $U^{(3)}_1$ and $U^{(3)}_2$; see Figure \ref{fig:U_j domain RII} for an illustration.
\begin{proposition}[Analytic approximation of $r$]\label{analytic extension of r RII}There exist continuous functions
\begin{equation*}
    r_a: \mathcal{R}_{\textup{\uppercase\expandafter{\romannumeral2}}}  \times \left(\overline{U_1^{(3)}}\cup\overline{U_3^{(3)}}\right) \rightarrow \mathbb{C} \; \text { and } \; r_r: \mathcal{R}_{\textup{\uppercase\expandafter{\romannumeral2}}} \times\left((-\infty,-\eta)\cup(\eta,+\infty)\right) \rightarrow \mathbb{C},
\end{equation*}
which satisfy the following properties:
\begin{itemize}
    \item [\rm (a)] $r(k)=r_a(\xi; k)+r_r(\xi; k)$ for all $(\xi; k) \in \mathcal{R}_{\textup{\uppercase\expandafter{\romannumeral2}}}  \times\{(-\infty,-\eta)\cup(\eta,+\infty)\}$.
    \item [\rm (b)] $r_a(-k)=r_a^*(k)$ for $k\in\overline{U_1^{(3)}}\cup\overline{U_3^{(3)}}.$
    \item [\rm (c)]  For all $\xi \in \mathcal{R}_{\textup{\uppercase\expandafter{\romannumeral2}}}$, the function $r_a: U_1^{(3)}\cup U_3^{(3)} \rightarrow \mathbb{C}$ is holomorphic. Moreover, for 
    $(\xi;k)\in\mathcal{R}_{\textup{\uppercase\expandafter{\romannumeral2}}}\times\Big{(}\overline{U_1^{(3)}}\cup\overline{U_3^{(3)}}\Big{)}$, 
\begin{equation*}
    \left|r_a(\xi; k)-r\left(\pm \eta\right)-r^{\prime}\left(\pm \eta\right)\left(k\mp \eta\right)\right| \lesssim \left|k\mp \eta\right|^2 \mathrm{e}^{t|\operatorname{Im} g_{\textup{\uppercase\expandafter{\romannumeral2}}}(\xi;k)|}
\end{equation*}
and
\begin{equation*}
    \left|r_a(\xi; k)\right| \lesssim \frac{\mathrm{e}^{t|\operatorname{Im} g_{\textup{\uppercase\expandafter{\romannumeral2}}}(\xi; k)|}}{1+|k|^2}.
\end{equation*}
\item [\rm (d)] For all $\xi \in \mathcal{R}_{\textup{\uppercase\expandafter{\romannumeral2}}}$, the function $r_r\in L^p\left((-\infty,-\eta)\cup(\eta,+\infty)\right),\; p\in[1,+\infty]$, and as $t \rightarrow +\infty$,
\begin{equation*}
    \left\|r_r(\xi;k)\right\|_{L^p\left((-\infty,-\eta)\cup(\eta,+\infty)\right)} = \mathcal{O}\left(t^{-2}\right).
\end{equation*}
\end{itemize} 
\end{proposition}   
\subsubsection{RH problem for $M^{(3)}$}
Similar to \eqref{def:M3 RI}, we define
\begin{equation} \label{def:M3 RII}
    M^{(3)}(k)=M^{(2)}(k)D_{\textup{\uppercase\expandafter{\romannumeral2}}}^{\sigma_3}(k)G(k)D_{\textup{\uppercase\expandafter{\romannumeral2}}}^{-\sigma_3}(k),
\end{equation}
where
\begin{equation*}
    G(k):=
        \begin{cases}
        \begin{pmatrix}1 & 0 \\ -r_ae^{2itg_{\textup{\uppercase\expandafter{\romannumeral2}}}} & 1\end{pmatrix}, &k\in U^{(3)}_1\cup U^{(3)}_2, \\
        \begin{pmatrix}1 & -r_a^{*}e^{-2itg_{\textup{\uppercase\expandafter{\romannumeral2}}}} \\ 0 & 1\end{pmatrix}, & k\in U_1^{(3)*}\cup U_2^{(3)*}, \\
        I, & \textnormal{elsewhere}.
        \end{cases}
\end{equation*}
Then RH problem for $M^{(3)}$ reads as follows:
\begin{RHP}
\hfill
\begin{itemize}
    \item $M^{(3)}(k)$ is holomorphic for $k\in\mathbb{C}\backslash\Gamma^{(3)}$, where $\Gamma^{(3)}:=\cup_{j=1}^{2}(\Gamma^{(3)}_j\cup\Gamma_j^{(3)*})\cup\mathbb{R}$;
    see Figure \ref{fig:U_j domain RII} for an illustration.
    \item $M^{(3)}(k)$ has continuous boundary values $M_{\pm}^{(3)}(k)$ on $\Gamma^{(3)}$ with the jump condition
    \begin{equation*}
        M^{(3)}_{+}(k)=M^{(3)}_{-}(k)V^{(3)}(k),
    \end{equation*}
    where
    \begin{equation}\label{equ:jump V3 RII}
        V^{(3)}(k)=
            \begin{cases}
            \begin{pmatrix} 1 & 0\\ D_{\textup{\uppercase\expandafter{\romannumeral2}}}^{-2}r_ae^{2itg_{\textup{\uppercase\expandafter{\romannumeral2}}}} & 1\end{pmatrix},  &k\in\Gamma^{(3)}_1\cup\Gamma^{(3)}_2,  \\
            \begin{pmatrix} 1 & -D_{\textup{\uppercase\expandafter{\romannumeral2}}}^{2}r_a^*e^{-2itg_{\textup{\uppercase\expandafter{\romannumeral2}}}}\\ 0 & 1\end{pmatrix},  &k\in\Gamma^{(3)*}_1\cup\Gamma^{(3)*}_2,  \\
            \begin{pmatrix}
                1-r_rr^*_r&-D_{\textup{\uppercase\expandafter{\romannumeral2}}}^2r_r^*e^{-2itg_{\textup{\uppercase\expandafter{\romannumeral2}}}}\\D_{\textup{\uppercase\expandafter{\romannumeral2}}}^{-2}r_re^{2itg_{\textup{\uppercase\expandafter{\romannumeral2}}}}&1
            \end{pmatrix},&k\in(-\infty,-\eta)\cup(\eta,+\infty),\\
            \begin{pmatrix} 0 & -1\\ 1 & D_{\textup{\uppercase\expandafter{\romannumeral2}},+}D_{\textup{\uppercase\expandafter{\romannumeral2}}, -}^{-1}e^{-2itg_{\textup{\uppercase\expandafter{\romannumeral2}},+}}\end{pmatrix}, &k\in (-\eta,-c_{r})\cup(c_{r},\eta),\\
            \begin{pmatrix} 0 & -1 \\ 1 & 0\end{pmatrix}, &k\in (-c_{r}, c_{r}).
            \end{cases}
    \end{equation}
    \item As $k\rightarrow\infty$ in $\mathbb{C}\setminus\Gamma^{(3)}$, we have $M^{(3)}(k)=I+\mathcal{O}(k^{-1})$.
\end{itemize}
\end{RHP}

\begin{figure}[htbp]
\begin{center}
    \tikzset{every picture/.style={line width=0.75pt}} 
    \begin{tikzpicture}[x=0.75pt,y=0.75pt,yscale=-1,xscale=1]
    \draw    (171,150) -- (481,152) ;
    \draw [shift={(332,151.04)}, rotate = 180.37] [color={rgb, 255:red, 0; green, 0; blue, 0 }  ][line width=0.75]    (10.93,-3.29) .. controls (6.95,-1.4) and (3.31,-0.3) .. (0,0) .. controls (3.31,0.3) and (6.95,1.4) .. (10.93,3.29)   ;
    \draw    (481,152) -- (588,222) ;
    \draw [shift={(539.52,190.28)}, rotate = 213.19] [color={rgb, 255:red, 0; green, 0; blue, 0 }  ][line width=0.75]    (10.93,-3.29) .. controls (6.95,-1.4) and (3.31,-0.3) .. (0,0) .. controls (3.31,0.3) and (6.95,1.4) .. (10.93,3.29)   ;
    \draw    (582,86) -- (481,152) ;
    \draw [shift={(537.36,115.17)}, rotate = 146.84] [color={rgb, 255:red, 0; green, 0; blue, 0 }  ][line width=0.75]    (10.93,-3.29) .. controls (6.95,-1.4) and (3.31,-0.3) .. (0,0) .. controls (3.31,0.3) and (6.95,1.4) .. (10.93,3.29)   ;
    \draw    (171,150) -- (64.26,79.61) ;
    \draw [shift={(123.47,118.66)}, rotate = 213.4] [color={rgb, 255:red, 0; green, 0; blue, 0 }  ][line width=0.75]    (10.93,-3.29) .. controls (6.95,-1.4) and (3.31,-0.3) .. (0,0) .. controls (3.31,0.3) and (6.95,1.4) .. (10.93,3.29)   ;
    \draw    (69.76,215.63) -- (171,150) ;
    \draw [shift={(125.41,179.55)}, rotate = 147.05] [color={rgb, 255:red, 0; green, 0; blue, 0 }  ][line width=0.75]    (10.93,-3.29) .. controls (6.95,-1.4) and (3.31,-0.3) .. (0,0) .. controls (3.31,0.3) and (6.95,1.4) .. (10.93,3.29)   ;
    \draw   (481,152) -- (604,151) ;
    \draw [shift={(548.5,151.45)}, rotate = 179.53] [color={rgb, 255:red, 0; green, 0; blue, 0 }  ][line width=0.75]    (10.93,-3.29) .. controls (6.95,-1.4) and (3.31,-0.3) .. (0,0) .. controls (3.31,0.3) and (6.95,1.4) .. (10.93,3.29)   ;
    \draw    (48,151) -- (171,150) ;
    \draw [shift={(115.5,150.45)}, rotate = 179.53] [color={rgb, 255:red, 0; green, 0; blue, 0 }  ][line width=0.75]    (10.93,-3.29) .. controls (6.95,-1.4) and (3.31,-0.3) .. (0,0) .. controls (3.31,0.3) and (6.95,1.4) .. (10.93,3.29)   ;
    \draw  [dash pattern={on 0.84pt off 2.51pt}]  (499,60) .. controls (473,60) and (472,241) .. (505,241) ;
    \draw  [dash pattern={on 0.84pt off 2.51pt}]  (152,64) .. controls (177,65) and (177,245) .. (149,246) ;
    \draw (468,156.4) node [anchor=north west][inner sep=0.75pt]  [font=\tiny]  {$\eta $};
          \fill (480,152) circle (1.2pt);
    \draw (173,156.4) node [anchor=north west][inner sep=0.75pt]  [font=\tiny]  {$-\eta $};
     \fill (169,150) circle (1.2pt);
    \draw (228,156.4) node [anchor=north west][inner sep=0.75pt]  [font=\tiny]  {$-c_{r}$};
    \draw (412,156.4) node [anchor=north west][inner sep=0.75pt]  [font=\tiny]  {$c_{r}$};

    \draw (541,72.4) node [anchor=north west][inner sep=0.75pt]  [font=\tiny]  {$\Gamma _{1}^{( 3)}$};
    \draw (542,209.4) node [anchor=north west][inner sep=0.75pt]  [font=\tiny]  {$\Gamma _{1}^{( 3) *}$};
    \draw (106,78.4) node [anchor=north west][inner sep=0.75pt]  [font=\tiny]  {$\Gamma _{2}^{( 3)}$};
    \draw (95,204.4) node [anchor=north west][inner sep=0.75pt]  [font=\tiny]  {$\Gamma _{2}^{( 3) *}$};
    \draw (543,114.4) node [anchor=north west][inner sep=0.75pt]  [font=\tiny]  {$U_{1}^{( 3)}$};
    \draw (546,162.4) node [anchor=north west][inner sep=0.75pt]  [font=\tiny]  {$U_{1}^{( 3) *}$};
    \draw (85,112.4) node [anchor=north west][inner sep=0.75pt]  [font=\tiny]  {$U_{2}^{( 3)}$};
    \draw (85,163.4) node [anchor=north west][inner sep=0.75pt]  [font=\tiny]  {$U_{2}^{( 3) *}$};
    \fill (234,150.5) circle (1.2pt);
 
      \fill (409,151.5) circle (1.2pt);
    \end{tikzpicture}
\caption{The jump contours of RH problem for $M^{(3)}$ when $\xi\in\mathcal{R}_{\textup{\uppercase\expandafter{\romannumeral2}}}$.}\label{fig:U_j domain RII}
\end{center}
\end{figure}

\subsection{Analysis of RH problem for $M^{(3)}$}
It's also noted that $V^{(3)}\rightarrow I$ as $t\rightarrow+\infty$ on the contours
$\Gamma^{(3)}_j\cup\Gamma_j^{(3)*}$ for $j=1,2$. Moreover, by item (d) in Proposition \ref{analytic extension of r RII}, 
we have that, as $t\rightarrow +\infty$, $V^{(3)}(k)\to I$ for $k\in(-\infty,-\eta)\cup (\eta, +\infty)$.
Therefore, it follows that $M^{(3)}$ is approximated, to the leading order, by the global parametrix $M^{(\infty)}$ given below. 

\subsubsection{Global Parametrix}
\begin{RHP}\label{RHP:Minfty RII}
    \hfill
\begin{itemize}
    \item $M^{(\infty)}(k)$ is holomorphic for $k\in\mathbb{C}\backslash[-\eta,\eta]$.
    \item $M^{(\infty)}(k)$ has continuous boundary values on $(-\eta,\eta)$ satisfying the following jump condition
    \begin{align*}
        M^{(\infty)}_{+}(k)=M^{(\infty)}_{-}(k)\begin{pmatrix}
            0 & -1 \\
            1 & 0
        \end{pmatrix}.
    \end{align*}
    \item As $k\rightarrow\infty$ in $\mathbb{C}\backslash[-\eta,\eta]$, we have $M^{(\infty)}(k)=I+\mathcal{O}(k^{-1})$.
    \item As $k\rightarrow\pm \eta$, $M^{(\infty)}(k)=\mathcal{O}((k\mp \eta)^{-1/4})$.
\end{itemize}
\end{RHP}
Thus the unique solution to $M^{(\infty)}(k)$ is given by
    \begin{equation}\label{equ:sol of Minfty RII}
        M^{(\infty)}(k)=\frac{1}{2}
        \left(
            \begin{array}{cc}
            \chi_\eta(k)+\chi^{-1}_\eta(k) & i\left(\chi_\eta(k)-\chi_\eta^{-1}(k)\right)\\
            -i\left(\chi_\eta(k)-\chi_{\eta}^{-1}(k)\right) & \chi_\eta(k)+\chi_\eta^{-1}(k)
            \end{array}
        \right),
    \end{equation}
where
\begin{equation*}
\chi_\eta(k)=\left(\frac{k-\eta}{k+\eta}\right)^{\frac{1}{4}}=\left(\frac{k-\sqrt{-2\xi}}{k+\sqrt{-2\xi}}\right)^{\frac{1}{4}}.
\end{equation*}
For later use, it can be obtained that, as $k\rightarrow\eta$,
\begin{equation}\label{equ:Delta_eta asy to eta}
    \begin{aligned}
    M^{(\infty)}(k)&=\frac{(2\eta)^{\frac{1}{4}}}{2(k-\eta)^{1/4}}\left[
        \begin{pmatrix}
            1 & -i\\
            i & 1
        \end{pmatrix}+\frac{(k-\eta)^{\frac{1}{2}}}{(2\eta)^{\frac{1}{2}}}
        \begin{pmatrix}
            1 & i\\
            -i & 1
        \end{pmatrix}+
        \frac{k-\eta}{8\eta}
        \begin{pmatrix}
            1 & -i\\
            i & 1
        \end{pmatrix}\right.\\
        &+\left.\frac{(k-\eta)^{\frac{3}{2}}}{4(2\eta)^{3/2}}
        \begin{pmatrix}
            -1 & -i\\
            i & -1
        \end{pmatrix}
        +\mathcal{O}\left(\left(k-\eta\right)^2\right)
    \right].
\end{aligned}
\end{equation}

\subsubsection{Local parametrices near saddle points} \label{subsubsec: local para pm eta}
Let
\begin{align*}
    D_\varrho(\eta)=\left\{k: |k-\eta|<\varrho \right\}, \quad D_\varrho(-\eta)=\left\{k: |k+\eta|<\varrho \right\}
\end{align*}
be two small disks around $\pm\eta$ respectively, where
\begin{equation*}
\varrho<\frac{1}{3}{\rm min}\left\{ |\eta-c_l|, |\eta+c_l|,   |\eta|  \right\}.
\end{equation*}
For $j\in\{r,l\}$, we intend to solve the following local RH problem for $M^{(j)}$.
\begin{RHP}
\hfill
\begin{itemize}
    \item $M^{(j)}(k)$ is holomorphic for $k\in\overline{D_{\varrho}(\pm \eta)}\setminus\Gamma^{(j)}$ (``$+$'' for $j=r$, and ``$-$'' for $j=l$),
    where
    \begin{align}\label{def:Gamma^(ell) RII}
        \Gamma^{(r)}:=D_\varrho(\eta)\cap \Gamma^{(3)},\quad \Gamma^{(l)}:=D_\varrho(-\eta)\cap \Gamma^{(3)};
    \end{align}
    see Figure \ref{fig:local jumps RII} for an illustration.
    \item $M^{(j)}(k)$ has continuous boundary values $M_{\pm}^{(j)}(k)$ on $k\in \Gamma^{(j)}$ with the jump condition
    \begin{align*}
        M^{(j)}_{+}(k)=M^{(j)}_{-}(k)V^{(3)}(k)\big|_{\Gamma^{(j)}}.
    \end{align*}
    \item As $k\rightarrow\infty$ in $\mathbb{C}\setminus\Gamma^{(j)}$, we have $M^{(j)}(k)=I+\mathcal{O}(k^{-1})$.
\end{itemize}
\end{RHP}
\begin{figure}[htbp]
    \begin{center}
        \tikzset{every picture/.style={line width=0.75pt}} 
        \begin{tikzpicture}[x=0.75pt,y=0.75pt,yscale=-1,xscale=1]
        \draw    (379,150) -- (451,150) ;
        \draw [shift={(421,150)}, rotate = 180] [color={rgb, 255:red, 0; green, 0; blue, 0 }  ][line width=0.75]    (10.93,-3.29) .. controls (6.95,-1.4) and (3.31,-0.3) .. (0,0) .. controls (3.31,0.3) and (6.95,1.4) .. (10.93,3.29)   ;
        \draw    (451,150) -- (512,112) ;
        \draw [shift={(486.59,127.83)}, rotate = 148.08] [color={rgb, 255:red, 0; green, 0; blue, 0 }  ][line width=0.75]    (10.93,-3.29) .. controls (6.95,-1.4) and (3.31,-0.3) .. (0,0) .. controls (3.31,0.3) and (6.95,1.4) .. (10.93,3.29)   ;
        \draw    (451,150) -- (508,195) ;
        \draw [shift={(484.21,176.22)}, rotate = 218.29] [color={rgb, 255:red, 0; green, 0; blue, 0 }  ][line width=0.75]    (10.93,-3.29) .. controls (6.95,-1.4) and (3.31,-0.3) .. (0,0) .. controls (3.31,0.3) and (6.95,1.4) .. (10.93,3.29)   ;
        \draw    (451,150) -- (523,150) ;
        \draw [shift={(493,150)}, rotate = 180] [color={rgb, 255:red, 0; green, 0; blue, 0 }  ][line width=0.75]    (10.93,-3.29) .. controls (6.95,-1.4) and (3.31,-0.3) .. (0,0) .. controls (3.31,0.3) and (6.95,1.4) .. (10.93,3.29)   ;
        \draw  [dash pattern={on 0.84pt off 2.51pt}] (379,150) .. controls (379,110.24) and (411.24,78) .. (451,78) .. controls (490.76,78) and (523,110.24) .. (523,150) .. controls (523,189.76) and (490.76,222) .. (451,222) .. controls (411.24,222) and (379,189.76) .. (379,150) -- cycle ;
        \draw    (262,152) -- (190,152) ;
        \draw [shift={(233,152)}, rotate = 180] [color={rgb, 255:red, 0; green, 0; blue, 0 }  ][line width=0.75]    (10.93,-3.29) .. controls (6.95,-1.4) and (3.31,-0.3) .. (0,0) .. controls (3.31,0.3) and (6.95,1.4) .. (10.93,3.29)   ;
        \draw    (190,152) -- (129,190) ;
        \draw [shift={(165.44,167.3)}, rotate = 148.08] [color={rgb, 255:red, 0; green, 0; blue, 0 }  ][line width=0.75]    (10.93,-3.29) .. controls (6.95,-1.4) and (3.31,-0.3) .. (0,0) .. controls (3.31,0.3) and (6.95,1.4) .. (10.93,3.29)   ;
        \draw    (190,152) -- (133,107) ;
        \draw [shift={(166.99,133.84)}, rotate = 218.29] [color={rgb, 255:red, 0; green, 0; blue, 0 }  ][line width=0.75]    (10.93,-3.29) .. controls (6.95,-1.4) and (3.31,-0.3) .. (0,0) .. controls (3.31,0.3) and (6.95,1.4) .. (10.93,3.29)   ;
        \draw    (190,152) -- (118,152) ;
        \draw [shift={(161,152)}, rotate = 180] [color={rgb, 255:red, 0; green, 0; blue, 0 }  ][line width=0.75]    (10.93,-3.29) .. controls (6.95,-1.4) and (3.31,-0.3) .. (0,0) .. controls (3.31,0.3) and (6.95,1.4) .. (10.93,3.29)   ;
        \draw  [dash pattern={on 0.84pt off 2.51pt}] (262,152) .. controls (262,191.76) and (229.76,224) .. (190,224) .. controls (150.24,224) and (118,191.76) .. (118,152) .. controls (118,112.24) and (150.24,80) .. (190,80) .. controls (229.76,80) and (262,112.24) .. (262,152) -- cycle ;
        \draw (444,153.4) node [anchor=north west][inner sep=0.75pt]  [font=\tiny]  {$\eta $};
        \draw (474,100.4) node [anchor=north west][inner sep=0.75pt]  [font=\tiny]  {$\Gamma _{1}^{( r)}$};
        \draw (400,121.4) node [anchor=north west][inner sep=0.75pt]  [font=\tiny]  {$\Gamma _{2}^{( r)}$};
        \draw (463,179.4) node [anchor=north west][inner sep=0.75pt]  [font=\tiny]  {$\Gamma _{3}^{( r)}$};
        \draw (525,138.4) node [anchor=north west][inner sep=0.75pt]  [font=\tiny]  {$\Gamma _{4}^{( r)}$};
        \draw (220,127.4) node [anchor=north west][inner sep=0.75pt]  [font=\tiny]  {$\Gamma _{2}^{( l)}$};
        \draw (156,94.4) node [anchor=north west][inner sep=0.75pt]  [font=\tiny]  {$\Gamma _{1}^{( l)}$};
        \draw (95,136.4) node [anchor=north west][inner sep=0.75pt]  [font=\tiny]  {$\Gamma _{4}^{( l)}$};
        \draw (145,181.4) node [anchor=north west][inner sep=0.75pt]  [font=\tiny]  {$\Gamma _{3}^{( l)}$};
        \draw (182,156.4) node [anchor=north west][inner sep=0.75pt]  [font=\tiny]  {$-\eta $};
        \draw (497,63.4) node [anchor=north west][inner sep=0.75pt]  [font=\scriptsize]  {$ D_\varrho(\eta)$};
        \draw (115,66.4) node [anchor=north west][inner sep=0.75pt]  [font=\scriptsize]  {$ D_\varrho(-\eta)$};
        \end{tikzpicture}
    \caption{The local jump contours of RH problem for $M^{(r)}$ (right) and $M^{(l)}$ (left) for $\xi\in\mathcal{R}_{\textup{\uppercase\expandafter{\romannumeral2}}}$.}\label{fig:local jumps RII}
    \end{center}
    \end{figure}
In the rest part of this section, we focus on the construction of $M^{(r)}$ near $k=\eta$, and the construction of $M^{(l)}$ near $k=-\eta$ can be obtained in a similar way.
By the definition of function $g_{\textup{\uppercase\expandafter{\romannumeral2}}}$ in \eqref{equ:g function RII}, we
define the fractional power ${g_{\textup{\uppercase\expandafter{\romannumeral2}}}(\xi;k)}^{3/2}$ for $k\in(-\infty,\eta]$ with the branch fixed
by the requirement that $g_{\textup{\uppercase\expandafter{\romannumeral2}}}^{2/3}>0$ for $k>\eta$. Introduce
\begin{equation}\label{def:f function}
    f(k):=-\left(\frac{3}{2}g_{\textup{\uppercase\expandafter{\romannumeral2}}}(k)\right)^{\frac{2}{3}}, \quad k\in\mathbb{C}\backslash(-\infty, \eta].
\end{equation}
As $k\rightarrow\eta$, it follows that
\begin{equation}\label{fasyeta}
f(\xi;k)=-2\cdot 6^{\frac{2}{3}}\eta\left(k-\eta\right)\left(1+\frac{k-\eta}{2\eta}+\mathcal{O}\left(\left(k-\eta\right)^2\right)\right).
\end{equation}
Introduce a new scaled variable
\begin{align}
    \zeta_r(\xi;k)=t^{2/3}f(\xi;k),
\end{align}
which implies that
\begin{equation}\label{equ:change of variable zeta(k) RII}
    \frac{4}{3}\zeta_{r}^{\frac{3}{2}}=
        \begin{cases}
        2itg_{\textup{\uppercase\expandafter{\romannumeral2}}}(k), \quad &\im k>0, \\
 -2itg_{\textup{\uppercase\expandafter{\romannumeral2}}}(k), \quad &\im k<0,\\
      2itg_{\textup{\uppercase\expandafter{\romannumeral2}}, +}(k)=-2itg_{\textup{\uppercase\expandafter{\romannumeral2}}, -}(k),  &k<\eta,
        \end{cases}
\end{equation}
where the cut $(\cdot)^{3/2}$ runs along $\mathbb{R}^{-}$ in the Airy parametrix stated in Appendix \ref{appendix:Airy model}.

Let $\mathfrak{r}^{(r)}(k)$ be the truncated Taylor polynomial of 
$r(k)$ at $k=\eta$, which is given by
\begin{equation}\label{r(r)}
    \mathfrak{r}^{(r)}(k)=r(\eta)+r^\prime(\eta)(k-\eta),
\end{equation}
and thus $\mathfrak{r}^{(r)}(k)-r(k)=\mathcal{O}\left(|k-\eta|^2\right)$ as $k\to\eta$. 
Furthermore, we can define $\mathfrak{D}^{(r)}(\xi;k)$, which could be seen as an 
approximation of the function $D_{\textup{\uppercase\expandafter{\romannumeral2}}}(\xi;k)$ defined in \eqref{equ:def D function RII}
as $k\to\eta$:
\begin{equation}\label{D(r)}
    \mathfrak{D}^{(r)}(\xi;k):=\exp\left\{ \frac{X_\eta( k)}{2 \pi i}\left[\int_{c_r}^{\eta} \frac{\frac{1}{2} \log (\mathfrak{r}^{(r)}( s) / \overline{\mathfrak{r}^{(r)}( s)})}{X_{\eta+}(s)(s-k)} \dif s+\int_{\eta}^{c_l} \frac{\log |\mathfrak{r}^{(r)}( s)|}{X_\eta(s)(s-k)} \dif s+\sum_{n=0}^{1} c_n(\xi)\left(k-\eta\right)^n\right]\right\},
\end{equation}
where the real coefficients $c_0(\xi)$ and $c_1(\xi)$ are uniquely determined by the requirement that
\begin{equation*}
    D_{\textup{\uppercase\expandafter{\romannumeral2}}}(\xi; k)=\mathfrak{D}^{(r)}(\xi; k)+\mathcal{O}\left(\left|k-\eta\right|^2\right), \quad  \quad k \rightarrow \eta.
\end{equation*}
Moreover, we can obtain the jump of $\mathfrak{D}^{(r)}(k)$ for $k\in D_{\varrho}\left(\eta\right)\cap \mathbb{R}$ as follows:
\begin{equation}\label{jump of mathfrak D on R}
    \begin{aligned}
    &\frac{\mathfrak{D}^{(r)}_-(k)}{\mathfrak{D}_+^{(r)}(k)}=\frac{1}{|\mathfrak{r}^{(r)}(k)|},\quad &&k\in(\eta,\eta+\varrho),\\
    &\mathfrak{D}^{(r)}_+(k)\mathfrak{D}^{(r)}_-(k)=\frac{\sqrt{\mathfrak{r}^{(r)}(k)}}{\sqrt{\left(\mathfrak{r}^{(r)}\right)^*(k)}},\quad &&k\in(\eta-\varrho,\eta).
\end{aligned}
\end{equation}

Under the change of variable \eqref{equ:change of variable zeta(k) RII}, 
now we define the following local parametrix $\tilde M^{(r)}$ via the 
Airy model problem exhibited in Appendix \ref{appendix:Airy model}:
\begin{equation}\label{R2Localeta}
    \tilde M^{(r)}(x,t;k):=P^{(r)}(\xi;k)\Psi^{(\textnormal{Ai})}\left(\zeta_{r}\left(\xi;k\right)\right)Q^{(r)}(\xi;k),
\end{equation}
where $P^{(r)}$ is a matching factor defined by
\begin{equation}\label{equ:Pr RII}
    P^{(r)}(k):=M^{(\infty)}(k) {Q^{(r)}}(k)^{-1} N^{-1}\zeta_{r}(k)^\frac{\sigma_3}{4}.
\end{equation}
Here $M^{(\infty)}$ is given by \eqref{equ:sol of Minfty RII}, 
$N$ and $Q^{(r)}$ are defined by
\begin{equation}\label{def:Q^{(r)}}
    N=\frac{1}{\sqrt{2}}\begin{pmatrix}1&i\\i&1
        
    \end{pmatrix},\quad Q^{(r)}(k):=
        \begin{cases}
       e^{\frac{\pi}{2}i\sigma_3}\mathfrak{r}^{(r)}(k)^{\frac{\sigma_3}{2}}\mathfrak{D}^{(r)}(k)^{-\sigma_3}\sigma_3e^{\frac{\pi}{2}i\sigma_3},   &k\in\mathbb{C}^{+}\cap \overline{D_{\varrho}\left(\eta\right)}, \\
       i\left(\mathfrak{r}^{(r)}\right)^*(k)^{\frac{\sigma_3}{2}}\mathfrak{D}^{(r)}(k)^{\sigma_3}\sigma_1\sigma_3e^{\frac{\pi}{2}i\sigma_3},   &k\in\mathbb{C}^{-}\cap \overline{D_{\varrho}\left(\eta\right)}.\\
        \end{cases}
\end{equation}
The function $\tilde{M}^{(r)}$ serves as an approximation to $M^{(r)}$ for sufficiently large $t$.
From the definition \eqref{R2Localeta}, its jump matrix $\tilde V^{(r)}(k)$ is given by
\begin{equation}\label{tilde VR}
   \tilde V^{(r)}=\begin{cases}
       \begin{pmatrix}
           1&0\\ \left[\mathfrak{D}^{(r)}\right]^{-2}\mathfrak{r}^{(r)}e^{2itg_{\textup{\uppercase\expandafter{\romannumeral2}}}}&1
       \end{pmatrix},&k\in\Gamma_1^{(r)},\\
       \begin{pmatrix}
0&-1\\1&\frac{\mathfrak{D}_+^{(r)}}{\mathfrak{D}_-^{(r)}}\frac{1}{|\mathfrak{r}^{(r)}|}e^{-2itg_{\textup{\uppercase\expandafter{\romannumeral2}},+}}\end{pmatrix}, &k\in\Gamma_2^{(r)},\\
\begin{pmatrix}
    1&-\left[\mathfrak{D}^{(r)}\right]^{2}\left(\mathfrak{r}^{(r)}\right)^*e^{-2itg_{\textup{\uppercase\expandafter{\romannumeral2}}}}\\0&1
\end{pmatrix}, &k\in\Gamma_3^{(r)},\\
I,&k\in\Gamma_4^{(r)}.
   \end{cases} 
\end{equation}
Recalling the jump of $\mathfrak{D}^{(r)}(k)$ on interval $(\eta-\varrho,\eta+\varrho)$ in \eqref{jump of mathfrak D on R}, 
we can obtain that $P^{(r)}(k)$ is analytic on $D_{\varrho}\left(\eta\right)$.  
Moreover, as $k\to\eta$, it follows from \eqref{equ:Delta_eta asy to eta} and \eqref{fasyeta} that
\begin{equation}\label{asymptotic for P^R}
    P^{(r)}(k)=\begin{pmatrix}
        \frac{-1+i}{2}(6t)^{\frac{1}{6}}(2\eta)^{\frac{1}{2}}&\frac{\frac{-1+i}{2}\left(1+C_\eta\right)}{(6t)^{\frac{1}{6}}(2\eta)^{\frac{1}{2}}}\\
        \frac{-1-i}{2}(6t)^{\frac{1}{6}}(2\eta)^{\frac{1}{2}}&\frac{\frac{1+i}{2}\left(1-C_\eta\right)}{(6t)^{\frac{1}{6}}(2\eta)^{\frac{1}{2}}}
    \end{pmatrix}+\mathcal{O}(|k-\eta|),\quad k\to\eta,
\end{equation}
where $C_\eta$ is defined as in \eqref{C_eta}.

Similarly, the RH problem for $\tilde M^{(l)}$ (an approximation of $M^{(l)}$) 
can be solved in a similar manner by introducing a new scaling variable $\zeta_l(\xi;k)$ and 
a holomorphic factor $P^{(l)}(\xi;k)$, analogous to \eqref{equ:Pr RII} with appropriate modifications. 
Specifically, we define the local parametrix $\tilde M^{(l)}(x,t;k)$ by
\begin{equation*}
    \tilde M^{(l)}(x,t;k):=P^{(l)}(\xi;k)\Psi^{(\textnormal{Ai})}\left(\zeta_{l}(\xi;k)\right)Q^{(l)}(\xi;k),
\end{equation*}
where 
\begin{equation*}
    \zeta_l(\xi,k)=t^{2/3}f(\xi;k),\quad k\in\mathbb{C}\backslash[-\eta, +\infty)
\end{equation*}
with the branch fixed by the requirement that $g_{\textup{\uppercase\expandafter{\romannumeral2}}}^{2/3}>0$ for $k<-\eta$.
The analytic factor $P^{(l)}$ is given by
\begin{equation*}
    P^{(l)}(k):=M^{(\infty)}(k) {Q^{(l)}}(k)^{-1} N^{-1}\zeta_{l}(k)^\frac{\sigma_3}{4},
\end{equation*}
where
\begin{equation}\label{def:Q^{(l)}}
    Q^{(l)}(k):=
        \begin{cases}
      ie^{-\frac{\pi}{2}i\sigma_3}\mathfrak{r}^{(l)}(k)^{-\frac{\sigma_3}{2}}\mathfrak{D}^{(l)}(k)^{\sigma_3}\sigma_1\sigma_3 ,   &k\in\mathbb{C}^{+}\cap \overline{D_{\varrho}\left(-\eta\right)}, \\
      -i\left(\mathfrak{r}^{(l)}\right)^*(k)^{-\frac{\sigma_3}{2}}\mathfrak{D}^{(l)}(k)^{-\sigma_3}e^{-\frac{\pi}{2}i\sigma_3} ,   &k\in\mathbb{C}^{-}\cap \overline{D_{\varrho}\left(-\eta\right)}.\\
        \end{cases}
\end{equation}
Here, $\mathfrak{r}^{(l)}$ and $\mathfrak{D}^{(l)}$ are approximations of $r$ and $D_{\textup{\uppercase\expandafter{\romannumeral2}}}$ near $-\eta$,
which are analogous to \eqref{r(r)} and \eqref{D(r)} respectively. 
It can be obtained from \eqref{def:Q^{(r)}} and \eqref{def:Q^{(l)}} that $Q_-^{(l)}(-\eta)=Q_+^{(r)}(-\eta)^{-1}$. 
As $k\to-\eta$, analogously to \eqref{asymptotic for P^R}, we have that
\begin{equation}\label{asymptotic for P^l}
    P^{(l)}(k)=\begin{pmatrix}
        \frac{\frac{-1-i}{2}\left(1-C_\eta\right)}{(6t)^{\frac{1}{6}}(2\eta)^{\frac{1}{2}}}&\frac{1+i}{2}(6t)^{\frac{1}{6}}(2\eta)^{\frac{1}{2}}\\
        \frac{\frac{1-i}{2}\left(1+C_\eta\right)}{(6t)^{\frac{1}{6}}(2\eta)^{\frac{1}{2}}}&\frac{1-i}{2}(6t)^{\frac{1}{6}}(2\eta)^{\frac{1}{2}}
    \end{pmatrix}+\mathcal{O}(|k+\eta|),\quad k\to-\eta.
\end{equation}
Now we introduce the following lemma, which describes the matching conditions between the local parametrices $\tilde M^{(j)}$, $j\in\{r,l\}$ and the global parametrix $M^{(\infty)}$, and the approximation of the jump matrices.
\begin{lemma}\label{Lemma v3-vr-II}
    For $\xi\in\mathcal{R}_\textup{\uppercase\expandafter{\romannumeral2}}$ and $t>0$, the function $\tilde M^{(j)}$ has the following properties for $j\in\{l,r\}$:
    \begin{itemize}
        \item [\rm (a)] As $t\to\infty$, 
        \begin{equation*}
            \|\tilde M^{(j)}(k)M^\infty(k)^{-1}-I\|_{L^\infty(\partial D_\varrho(\pm\eta))}=\mathcal{O}(t^{-1}).
        \end{equation*}
        Here, the script ``$\pm$'' stands for ``$+$'' when $j=r$, and ``$-$'' when $j=l$.
 \item [\rm (b)] Across $\Gamma^{(j)}$, the jump matrix $\tilde V^{(j)}$ of $\tilde M^{(j)}(x,t;k)$ satisfies
 \begin{align*}
    & \|V^{(3)}-\tilde V^{(j)}\|_{L^1(\Gamma^{(j)})}=\mathcal{O}(t^{-2}),\\
    & \|V^{(3)}-\tilde V^{(j)}\|_{L^2(\Gamma^{(j)})}=\mathcal{O}(t^{-\frac{5}{3}}),\\
     & \|V^{(3)}-\tilde V^{(j)}\|_{L^\infty(\Gamma^{(j)})}=\mathcal{O}(t^{-\frac{4}{3}}).
 \end{align*}
    \end{itemize}
\end{lemma}
\begin{proof}
\hfill
\begin{itemize}
    \item [(a)]  It follows directly from the asymptotic behaviors of $P^{(j)}(k)$ in \eqref{asymptotic for P^R} and \eqref{asymptotic for P^l}, 
    as well as the large-$\zeta$ expansion of $\Psi^{(\textnormal{Ai})}(\zeta)$ in \eqref{asymptotics for Airy model}.
     \item [(b)] The estimates can be obtained by using the definitions of $V^{(3)}$ in \eqref{equ:jump V3 RII} and $\tilde V^{(j)}$ in \eqref{tilde VR},
    along with the asymptotic behavior of the exponential factors $e^{\pm 2itg_{\textup{\uppercase\expandafter{\romannumeral2}}}(k)}$ on $\Gamma^{(j)}$ for $j\in\{l,r\}$.
\end{itemize}

\end{proof}

\subsection{Small norm RH problem for $M^{(err)}$}\label{subsec:small norm RH RII}
Define
\begin{align}\label{def:Merr RII}
    M^{(err)}(x,t;k):=
            \begin{cases}
             M^{(3)}(x,t;k)\left(M^{(\infty)}\left(x,t;k\right)\right)^{-1}, & k\in \mathbb{C}\setminus\left(D_{\varrho}\left(\eta\right)\cup D_{\varrho}\left(-\eta\right)\right),\\
             M^{(3)}(x,t;k)\left(\tilde M^{(r)}\left(x,t;k\right)\right)^{-1}, & k\in D_{\varrho}\left(\eta\right),\\
             M^{(3)}(x,t;k)\left(\tilde M^{(l)}\left(x,t;k\right)\right)^{-1}, & k\in D_{\varrho}\left(-\eta\right).
            \end{cases}
    \end{align}
 It is readily seen that $M^{(err)}(k)$ satisfies the following RH problem.
    \begin{RHP}\label{RHP: Merr RII}
    \hfill
    \begin{itemize}
        \item $M^{(err)}(k)$ is holomorphic for $k\in\mathbb{C}\setminus\Gamma^{(err)}$, where
        \begin{equation*}
        \Gamma^{(err)}:=\partial D_{\varrho}\left(\eta\right)\cup\partial D_{\varrho}\left(-\eta\right)\cup \Gamma^{(3)};
        \end{equation*}
        see Figure \ref{fig:jump contour Merr RII} for an illustration.
        \item $M^{(err)}(k)$ has continuous boundary values $M^{(err)}_\pm(k)$ on $k\in \Gamma^{(err)}$ with the jump condition
        \begin{equation*}
            M^{(err)}_{+}(k)=M^{(err)}_{-}(k)V^{(err)}(k),
        \end{equation*}
        where
         \begin{align}\label{equ:jump Verr RII}
           V^{(err)}(k)=
                \begin{cases}
                 M^{(\infty)}_-(k)V^{(3)}(k){M^{(\infty)}_+(k)}^{-1}, & k\in\Gamma^{(3)}\backslash \left(\overline{D_\varrho(\eta)}\cup \overline{D_\varrho(-\eta)}\right),\\
                 \tilde M^{(r)}(k){M^{(\infty)}(k)}^{-1}, & k\in\partial D_\varrho(\eta),\\
               \tilde  M^{(l)}(k){M^{(\infty)}(k)}^{-1}, & k\in\partial D_\varrho(-\eta),\\
                 \tilde M^{(r)}_-(k)V^{(3)}(k){\tilde M^{(r)}_+(k)}^{-1}, & k\in\Gamma^{(3)}\cap D_\varrho(\eta),\\
                \tilde M^{(l)}_-(k)V^{(3)}(k){\tilde M^{(l)}_+(k)}^{-1}, & k\in\Gamma^{(3)}\cap D_\varrho(-\eta).
                \end{cases}    
        \end{align}
        \item As $k\rightarrow\infty$ in $k\in\mathbb{C}\setminus\Gamma^{(err)}$, we have $M^{(err)}(k)=I+\mathcal{O}(k^{-1})$.
        \item As $k\rightarrow\pm \eta$, we have $M^{(err)}(k)=\mathcal{O}(1)$.
    \end{itemize}
    \end{RHP}

\begin{figure}[H]
\begin{center}
    \tikzset{every picture/.style={line width=0.75pt}} 
    \begin{tikzpicture}[x=0.75pt,y=0.75pt,yscale=-1,xscale=1]
    \draw    (175,92) -- (234,130) ;
    \draw [shift={(209.54,114.25)}, rotate = 212.78] [color={rgb, 255:red, 0; green, 0; blue, 0 }  ][line width=0.75]    (10.93,-3.29) .. controls (6.95,-1.4) and (3.31,-0.3) .. (0,0) .. controls (3.31,0.3) and (6.95,1.4) .. (10.93,3.29)   ;
    \draw    (175,197) -- (233,159) ;
    \draw [shift={(209.02,174.71)}, rotate = 146.77] [color={rgb, 255:red, 0; green, 0; blue, 0 }  ][line width=0.75]    (10.93,-3.29) .. controls (6.95,-1.4) and (3.31,-0.3) .. (0,0) .. controls (3.31,0.3) and (6.95,1.4) .. (10.93,3.29)   ;
    \draw  [color={rgb, 255:red, 0; green, 0; blue, 0 }  ,draw opacity=1 ] (229,146) .. controls (229,132.19) and (240.19,121) .. (254,121) .. controls (267.81,121) and (279,132.19) .. (279,146) .. controls (279,159.81) and (267.81,171) .. (254,171) .. controls (240.19,171) and (229,159.81) .. (229,146) -- cycle ;
    \draw  [color={rgb, 255:red, 0; green, 0; blue, 0 }  ,draw opacity=1 ] (391,146) .. controls (391,132.19) and (402.19,121) .. (416,121) .. controls (429.81,121) and (441,132.19) .. (441,146) .. controls (441,159.81) and (429.81,171) .. (416,171) .. controls (402.19,171) and (391,159.81) .. (391,146) -- cycle ;
    \draw    (434,128) -- (492,90) ;
    \draw [shift={(468.02,105.71)}, rotate = 146.77] [color={rgb, 255:red, 0; green, 0; blue, 0 }  ][line width=0.75]    (10.93,-3.29) .. controls (6.95,-1.4) and (3.31,-0.3) .. (0,0) .. controls (3.31,0.3) and (6.95,1.4) .. (10.93,3.29)   ;
    \draw    (437,160) -- (492,195) ;
    \draw [shift={(469.56,180.72)}, rotate = 212.47] [color={rgb, 255:red, 0; green, 0; blue, 0 }  ][line width=0.75]    (10.93,-3.29) .. controls (6.95,-1.4) and (3.31,-0.3) .. (0,0) .. controls (3.31,0.3) and (6.95,1.4) .. (10.93,3.29)   ;
    \draw  [color={rgb, 255:red, 0; green, 0; blue, 0 }  ,draw opacity=1 ] (410.76,116.06) -- (422.03,122.35) -- (409.37,124.87) ;
    \draw  [color={rgb, 255:red, 0; green, 0; blue, 0 }  ,draw opacity=1 ] (245.96,116.99) -- (258.1,121.37) -- (246.02,125.9) ;
    \draw   (391,146) -- (573,145.51) ;
    \draw [shift={(575,145.5)}, rotate = 179.84] [color={rgb, 255:red, 0; green, 0; blue, 0 }  ][line width=0.75]    (10.93,-3.29) .. controls (6.95,-1.4) and (3.31,-0.3) .. (0,0) .. controls (3.31,0.3) and (6.95,1.4) .. (10.93,3.29)   ;
    \draw    (279,146) -- (391,146) ;
    \draw [shift={(341,146)}, rotate = 180] [color={rgb, 255:red, 0; green, 0; blue, 0 }  ][line width=0.75]    (10.93,-3.29) .. controls (6.95,-1.4) and (3.31,-0.3) .. (0,0) .. controls (3.31,0.3) and (6.95,1.4) .. (10.93,3.29)   ;
    \draw   (93,144) -- (279,146) ;
    \draw (408,148.4) node [anchor=north west][inner sep=0.75pt]  [font=\tiny]  {$\eta $};
    \draw (245,148.4) node [anchor=north west][inner sep=0.75pt]  [font=\tiny]  {$-\eta $};
    \draw (579,137.4) node [anchor=north west][inner sep=0.75pt]    {$\re k$};
    \draw (300,148.4) node [anchor=north west][inner sep=0.75pt]  [font=\tiny]  {$-c_{r}$};
    \draw (349,148.4) node [anchor=north west][inner sep=0.75pt]  [font=\tiny]  {$c_{r}$};
    \draw (234,130)--(256,145.4);
    \draw (233,159)--(256,145.4);
    \draw (434,128)--(410,146.4);
    \draw (437,160)--(410,146.4);
     \fill (256,145.4) circle (1.2pt);
          \fill (410,146.4) circle (1.2pt);
               \fill (354,146.4) circle (1.2pt);
               \fill (308,146.4) circle (1.2pt);         
    \end{tikzpicture}
\caption{The jump contours $\Gamma^{(err)}$ of RH problem for $M^{(err)}$ for $\xi\in\mathcal{R}_{\textup{\uppercase\expandafter{\romannumeral2}}}$.}\label{fig:jump contour Merr RII}
\end{center}
\end{figure}

Using items (c)--(d) in Proposition \ref{analytic extension of r RII} and Lemma \ref{Lemma v3-vr-II}, 
a straightforward calculation yields that for $p=1,2,\infty$,
\begin{align}\label{est:Verr-I RII}
    \Vert V^{(err)}(k)-I \Vert_{L^p}=\begin{cases}
         \mathcal{O}(e^{-ct}), & k\in\Gamma^{(err)}\setminus\left(\overline{D_\varrho(\eta)}\cup \overline{D_\varrho(-\eta)}\cup\mathbb{R}\right),\\
         \mathcal{O}(t^{-1}), &k\in\partial D_\varrho(\eta)\cup \partial D_\varrho(-\eta),\\
         \mathcal{O}(t^{-2}), &k\in\mathbb{R},\\
         \mathcal{O}(t^{\omega_p}), &k\in \Gamma^{(err)}\cap \left(D_\varrho(\eta)\cup D_\varrho(-\eta)\right),
        \end{cases}
\end{align}
where $\omega_1=-2$, $\omega_2=-5/3$, $\omega_\infty=-4/3$ and $c>0$. 

It then follows from the small norm RH problem theory \cite{DZ02} that there exists
a unique solution to RH problem \ref{RHP: Merr RII} for large positive $t$. Furthermore,
according to Beals-Coifman theory \cite{BCdecom} again, the solution to $M^{(err)}$ can be given by
\begin{equation}\label{BCsolforErrorR2}
    M^{(err)}(k)=I+\frac{1}{2\pi i}\int_{\Gamma^{(err)}}\frac{\mu(s)(V^{(err)}(s)-I)}{s-k}\dif s,
\end{equation}
where $\mu\in I+L^{2}(\Gamma^{(err)})$ is the unique solution of \eqref{equ:fredholm ope equ}.
Analogous to the estimate \eqref{est:mu-I RI}, we have
\begin{equation}\label{est:mu-I RII}
    \Vert C_{(err)}\Vert \leqslant \Vert C_{-} \Vert_{L^{2}(\Gamma^{(err)})\rightarrow L^{2}(\Gamma^{(err)})}\Vert V^{(err)}-I \Vert_{L^{\infty}(\Gamma^{(err)})}
    \lesssim \mathcal{O}(t^{-1}),
\end{equation}
which implies that $I-C_{(err)}$ is invertible for large positive $t$ in this case.

To evaluate $M^{(err)}_1$ defined in \eqref{equ:Merr_1 RI}, we give the next proposition which 
is an analogue of Proposition \ref{prop: result of Merr_1 RI}.
\begin{proposition}\label{prop: result of Merr_1 RII}
    With $M^{(err)}_1$ defined in \eqref{equ:Merr_1 RI}, we have, as $t\rightarrow+\infty$,
    \begin{equation}\label{equ:Merr_1 RII}
        M^{(err)}_1=\frac{t^{-1}}{1152\eta^2}
        \begin{pmatrix}
            iC_\eta^2[7+5(2\eta)^{\frac{1}{2}}]-12i&-2[7+5(2\eta)^{\frac{1}{2}}]C_\eta\\2[7+5(2\eta)^{\frac{1}{2}}]C_\eta&-iC_\eta^2[7+5(2\eta)^{\frac{1}{2}}]+12i
        \end{pmatrix}+\mathcal{O}(t^{-2}),
    \end{equation}
    with $\eta=\sqrt{-2\xi}$, and $C_\eta$ given by \eqref{C_eta}.
\end{proposition}
\begin{proof}
    Let us divide $M^{(err)}_1$ into four parts by
    \begin{align*}
        &I_1:=-\frac{1}{2\pi i}\int_{\Gamma^{(err)}}\left(\mu(s)-I\right)\left(V^{(err)}(s)-I\right)\dif s, \\
        &I_2:=-\frac{1}{2\pi i}\int_{\Gamma^{(err)}\setminus\left(D_\varrho(\eta)\cup D_\varrho(-\eta)\right)}\left(V^{(err)}(s)-I\right)\dif s, \\
        &I_3:=-\frac{1}{2\pi i}\oint_{\partial D_\varrho(\eta)\cup \partial D_\varrho(-\eta)}\left(V^{(err)}(s)-I\right)\dif s,\\
        &I_4:=-\frac{1}{2\pi i}\int_{\Gamma^{(err)}\cap\left(D_\varrho(\eta)\cup D_\varrho(-\eta)\right)}\left(V^{(err)}(s)-I\right)\dif s.
       \end{align*}
Analogous to Proposition \ref{prop: result of Merr_1 RI}, the main contribution to
$M^{(err)}_1$ stems from the $I_3$. For the other three parts, we have the following estimates:
$I_1=\mathcal{O}(t^{-2})$ by \eqref{est:mu-I RII}, $I_2=I_4=\mathcal{O}(e^{-ct})$ by \eqref{est:Verr-I RII}. 

Now we turn to estimate $I_3$, let us divide $I_3:=I_3^{(r)}+I_3^{(l)}$, where
$I_3^{(r)}$ and $I_3^{(l)}$ are two contour integrations along $\partial D_\varrho(\eta)$ and $\partial D_\varrho(-\eta)$
respectively. Detailed analysis will be given for $I_3^{(r)}$ below, and the similar manners could be
applied to $I_3^{(l)}$. To proceed, let us define
\begin{equation}\label{def:T^{(r)}_j}   T^{(r)}_j(\xi;k):=f^{-3j/2}M^{(\infty)}\left(Q^{(r)}\right)^{-1} \Psi_j^{\mathrm{(Ai)}} Q^{(r)}\left(M^{(\infty)}\right)^{-1}, \quad j\geqslant 1,
\end{equation}
where $f$, $M^{(\infty)}$, $\Psi_j^{\mathrm{(Ai)}}$ and $Q^{(r)}$ are defined
in \eqref{def:f function}, \eqref{equ:sol of Minfty RII}, \eqref{airy N} and \eqref{def:Q^{(r)}}, respectively.
From the definition of \eqref{def:T^{(r)}_j}, it could be verified that the expansion
$\sum_{j=1}^{\infty}t^{-j}T^{(r)}_j$ converges absolutely for large positive $t$ for
that $\xi\in\mathcal{R}_{\textup{\uppercase\expandafter{\romannumeral2}}}$ and $k\in\partial D_\varrho(\eta)$.
It then follows from \eqref{equ:jump Verr RII} that
\begin{align}\label{def:Ir1}
    I_3^{(r)}&=-\frac{1}{2\pi i}\oint_{\partial D_\varrho(\eta)}\left(\sum_{j=1}^{\infty}\frac{T_j^{(r)}(\xi;k)}{t^j}\right)\dif s
    =t^{-1}\textnormal{Res}_{k=\eta}T^{(r)}_{1}(\xi;k)+\mathcal{O}(t^{-2}).
\end{align}
It remains to estimate $\textnormal{Res}_{k=\eta}T^{(r)}_{1}(\xi;k)$.
Since $\textnormal{Res}_{k=\eta}T^{(r)}_{1}(\xi;k)$ is independent of $t$ 
but dependent of the variable $\xi=x/(12t)$, 
then by the expression \eqref{airy N}, we can rewrite
\begin{equation}\label{equ:Tr1 new form}
    T^{(r)}_{1}(\xi;k)=P^{(r)}(\xi,1;k)f^{-\frac{\sigma_3}{4}}N\frac{\Psi_1^{(\text{Ai})}}{f^{\frac{3}{2}}}N^{-1}f^{\frac{\sigma_3}{4}}\left({P^{(r)}(\xi,1;k)}\right)^{-1},
\end{equation}
where $P^{(r)}(\xi,1;k)$ is obtained by setting $t=1$ in the expression of $P^{(r)}(\xi;k)$ given by \eqref{equ:Pr RII}:
\begin{equation*}
    P^{(r)}(\xi,1;k):=M^{(\infty)}(\xi;k)\left({Q^{(r)}(\xi;k)}\right)^{-1}N^{-1}(f(\xi;k))^{\frac{\sigma_3}{4}}.
\end{equation*}
On account of \eqref{fasyeta}, we obtain that, for $k\rightarrow \eta$
\begin{align}\label{equ:fasyeta 1/3 3/2}
   \left\{
        \begin{aligned}
        & f^{1/4}=e^{-\frac{i\pi}{4}}2^{\frac{1}{4}}6^{\frac{1}{6}}\eta^{\frac{1}{4}}(k-\eta)^{\frac{1}{4}}\left(1+\frac{k-\eta}{8\eta}+\mathcal{O}\left( \left(k-\eta\right)^2\right)\right),\\
        & f^{3/2}=e^{\frac{i\pi}{2}}2^{\frac{3}{2}}6\eta^{\frac{3}{2}}(k-\eta)^{\frac{2}{3}}\left(1+\frac{3}{4}\frac{k-\eta}{\eta}+\mathcal{O}\left(\left(k-\eta\right)^2\right)\right).
        \end{aligned}
        \right.
\end{align}
Thus, as $k\rightarrow\eta$, it is followed that the middle factor of $T^{(r)}_{1}$ defined in \eqref{equ:Tr1 new form}
is given by
\begin{align}\label{equ:middle factor ktoeta}
    f^{-\frac{\sigma_3}{4}}N\frac{\Psi_1^{(\text{Ai})}}{f^{\frac{3}{2}}}N^{-1}f^{\frac{\sigma_3}{4}}&=\begin{pmatrix}
        0&\frac{5i}{1152\cdot6^{1/3}\eta^2(k-\eta)^2}-\frac{5i}{1152\cdot6^{1/3}\eta^3(k-\eta)}\\-\frac{7i}{96\cdot6^{2/3}\eta(k-\eta)}&0
    \end{pmatrix}+\mathcal{O}(1).
\end{align}
The next task is to compute the quantities $P^{(r)}(\xi,1;\eta)$ and the
$k$-derivative of $P^{(r)}$ at $k=\eta$. Indeed, the former one equals to
\eqref{asymptotic for P^R} at $t=1$. As for the latter one, 
the term $(k-\eta)^{3/2}$ in the expansion of $D_{\textup{\uppercase\expandafter{\romannumeral2}}}$ 
does not contribute to $\textnormal{Res}_{k=\eta}T^{(r)}_{1}$, which indicates that it is not necessary to calculate the full expression for the entries of $\left[P^{(r)}\right]^\prime(\eta)$.
Therefore, substituting $P^{(r)}(\eta)$ given by \eqref{asymptotic for P^R}, $\left[P^{(r)}\right]^\prime(\eta)$ and \eqref{equ:middle factor ktoeta} into \eqref{equ:Tr1 new form},
we obtain
\begin{align}\label{Res:Tr1}
    \textnormal{Res}_{k=\eta}T^{(r)}_{1}=\frac{1}{2304\eta^2}
    \begin{pmatrix}
      iC_\eta^2[7+5(2\eta)^{\frac{1}{2}}]-12i&-[7+5(2\eta)^{\frac{1}{2}}]C_\eta(C_\eta+2)-2\\-[7+5(2\eta)^{\frac{1}{2}}]C_\eta(C_\eta-2)-2& -iC_\eta^2[7+5(2\eta)^{\frac{1}{2}}]+12i
    \end{pmatrix}.
\end{align}
Similar to estimate $I^{(r)}_3$, we also have
\begin{align}\label{def:Il1}
I_3^{(l)}=t^{-1}\textnormal{Res}_{k=-\eta}T^{(l)}_{1}+\mathcal{O}(t^{-2}),
\end{align}
where
\begin{align}\label{Res:Tl1}
    \textnormal{Res}_{k=-\eta}T^{(l)}_{1}=
    \frac{1}{2304\eta^2}
    \begin{pmatrix}
      iC_\eta^2[7+5(2\eta)^{\frac{1}{2}}]-12i&[7+5(2\eta)^{\frac{1}{2}}]C_\eta(C_\eta-2)+2\\ [7+5(2\eta)^{\frac{1}{2}}]C_\eta(C_\eta+2)+2& -iC_\eta^2[7+5(2\eta)^{\frac{1}{2}}]+12i
    \end{pmatrix}.
\end{align}

On account of \eqref{def:Ir1}, \eqref{Res:Tr1}, \eqref{def:Il1} and \eqref{Res:Tl1},
it arrives at the desired result given in \eqref{equ:Merr_1 RII}.
\end{proof}
\subsection{Proof of the part (\textup{\uppercase\expandafter{\romannumeral2}}) of Theorem \ref{thm:mainthm}}
By tracing back the transformations \eqref{def:M1 RII}, \eqref{def:M2 RII}, \eqref{def:M3 RII} and \eqref{def:Merr RII},
we conclude, for $k\in\mathbb{C}\setminus\Gamma^{(3)}$,
\begin{equation}
M(k)=M^{(err)}M^{(\infty)}D_{\textup{\uppercase\expandafter{\romannumeral2}}}^{\sigma_3}(k)e^{-it(g_{\textup{\uppercase\expandafter{\romannumeral2}}}-\theta)\sigma_3},
\end{equation}
where $M^{(err)}$, $M^{(\infty)}$ are defined in 
\eqref{def:Merr RII} and \eqref{equ:sol of Minfty RII} respectively.

Together with the reconstruction formula stated in \eqref{equ:recovering formula}, it
is accomplished that
\begin{align}
    q(x,t)=2i\left[(M^{(err)}_1)_{12}+\lim_{k\rightarrow\infty}k\left(\Delta_{\eta}(k)\right)_{12}\right].
\end{align}
On account of \eqref{equ:sol of Minfty RII} and \eqref{equ:Merr_1 RII}, we obtain
the part (II) of Theorem \ref{thm:mainthm}.

\section{Asymptotic analysis of the RH problem for $M$ in $\mathcal{R}_{{\textup{\uppercase\expandafter{\romannumeral3}}}_{a}}$}\label{sec:asymptotic analysis in RIII}
\subsection{First transformation: $M\rightarrow M^{(1)}$}

\subsubsection{The $g$-function}
For $\xi\in\mathcal{R}_{\textup{\uppercase\expandafter{\romannumeral3}}_a}\cup\mathcal{R}_{\textup{\uppercase\expandafter{\romannumeral3}}_b}$, we introduce
\begin{equation}\label{equ:g function RIII}
g_{\textup{\uppercase\expandafter{\romannumeral3}}}(\xi;k):=(4k^2+12\xi+2c_{r}^2)X_{r}(k), \quad X_{r}(k)=\sqrt{k^2-c_{r}^2},
\end{equation}
where the branch of the square root being chosen such that $X_r(k)=k+\mathcal{O}(k^{-1})$ as $k\rightarrow\infty$.
It's readily verified that the following properties for $g_{\textup{\uppercase\expandafter{\romannumeral3}}}$ defined in \eqref{equ:g function RIII} hold true.
\begin{proposition}The function $g_{\textup{\uppercase\expandafter{\romannumeral3}}}$  defined in \eqref{equ:g function RIII} satisfies the following properties:
    \begin{itemize}
        \item $g_{\textup{\uppercase\expandafter{\romannumeral3}}}(\xi;k)$ is holomorphic for $k\in\mathbb{C}\backslash[-c_{r},c_{r}]$.
        \item As $k\rightarrow\infty$ in $\mathbb{C}\setminus[-c_{r},c_{r}]$,
        we have $g_{\textup{\uppercase\expandafter{\romannumeral3}}}(\xi;k)=\theta(\xi;k)+\mathcal{O}(k^{-1})$.
        \item For $k\in(-c_{r},c_{r})$, $g_{\textup{\uppercase\expandafter{\romannumeral3}},+}(\xi;k)+g_{\textup{\uppercase\expandafter{\romannumeral3}},-}(\xi;k)=0$.
    \end{itemize}
\end{proposition}

A straightforward calculation shows that $\pm \eta_{r}(\xi)=\pm \sqrt{-3\xi-c_r^2/2}$ are the two real zeros
for $\xi\in\mathcal{R}_{\textup{\uppercase\expandafter{\romannumeral3}}_a}$, while $\eta_{r}(\xi)=\pm i\sqrt{3\xi+c_r^2/2}$
are two purely imaginary zeros for $\xi\in\mathcal{R}_{\textup{\uppercase\expandafter{\romannumeral3}}_b}$.
Noticing that $\im g_{\textup{\uppercase\expandafter{\romannumeral3}},+}>0$ for $k\in(-c_r,-\eta_r)\cup(\eta_r, c_r)$,
then the signature table for $\im g_{\textup{\uppercase\expandafter{\romannumeral3}}}$ is illustrated in Figure \ref{fig:signs img RIII}.

\begin{figure}[htbp]
\begin{center}
    \tikzset{every picture/.style={line width=0.75pt}} 
    \begin{tikzpicture}[x=0.75pt,y=0.75pt,yscale=-1,xscale=1]
    \draw    (372,52) .. controls (346,52) and (345,233) .. (378,233) ;
    \draw    (161,139) -- (477,139) ;
    \draw    (248,50) .. controls (273,51) and (273,231) .. (245,232) ;
    \draw (245,141.4) node [anchor=north west][inner sep=0.75pt]  [font=\tiny]  {$-\eta_r$};
    \fill (266,139) circle (1.2pt);
    \draw (356,142.4) node [anchor=north west][inner sep=0.75pt]  [font=\tiny]  {$\eta_r$};
    \fill (353,139) circle (1.2pt);
    \draw (171,145.4) node [anchor=north west][inner sep=0.75pt]  [font=\tiny]  {$-c_{l}$};
    \draw (430,143.4) node [anchor=north west][inner sep=0.75pt]  [font=\tiny]  {$c_{l}$};
    \draw (202.18,144.13) node [anchor=north west][inner sep=0.75pt]  [font=\tiny,rotate=-1.56]  {$-c_{r}$};
    \draw (400,144.4) node [anchor=north west][inner sep=0.75pt]  [font=\tiny]  {$c_{r}$};
    \draw (443,89.4) node [anchor=north west][inner sep=0.75pt]    {$+$};
    \draw (448,170.4) node [anchor=north west][inner sep=0.75pt]    {$-$};
    \draw (296,77.4) node [anchor=north west][inner sep=0.75pt]    {$-$};
    \draw (301,189.4) node [anchor=north west][inner sep=0.75pt]    {$+$};
    \draw (172,172.4) node [anchor=north west][inner sep=0.75pt]    {$-$};
    \draw (175,90.4) node [anchor=north west][inner sep=0.75pt]    {$+$};
   
     \fill (177,139) circle (1.2pt);
\fill (210,139) circle (1.2pt);
\fill (435,139) circle (1.2pt);
  \fill (405,139) circle (1.2pt);
    \end{tikzpicture}
\caption{ Signature table of the function $\im g_{\textup{\uppercase\expandafter{\romannumeral3}}}(\xi;k)$ for $\xi\in\mathcal{R}_{\textup{\uppercase\expandafter{\romannumeral3}}_a}$.}\label{fig:signs img RIII}
\end{center}
\end{figure}

\subsubsection{RH problem for $M^{(1)}$}
With the aid of  function $g_{\textup{\uppercase\expandafter{\romannumeral3}}}$ defined in \eqref{equ:g function RIII}, 
we define $M^{(1)}$ by
\begin{equation}\label{def:M1 RIII}
    M^{(1)}(x,t;k):=M(x,t;k)e^{it\left(g_{\textup{\uppercase\expandafter{\romannumeral3}}}(\xi;k)-\theta(\xi;k)\right)\sigma_3}.
\end{equation}
Then RH problem for $M^{(1)}$ reads as follows:
\begin{RHP}\label{RHP:M1 RIII}
\hfill
\begin{itemize}
    \item $M^{(1)}(k)$ is holomorphic for $k\in\mathbb{C}\backslash\mathbb{R}$.
    \item $M^{(1)}(k)$ has continuous boundary values $M_{\pm}^{(1)}(k)$ on $\mathbb{R}$ with the jump condition
    \begin{equation}
        M^{(1)}_{+}(k)=M^{(1)}_{-}(k)V^{(1)}(k), \quad k\in\mathbb{R},
    \end{equation}
    where
    \begin{equation}\label{equ:jump V1 RIII}
        V^{(1)}(k)=
            \begin{cases}
            \begin{pmatrix}1-rr^* & -r^*e^{-2itg_{\textup{\uppercase\expandafter{\romannumeral3}}}}\\ re^{2itg_{\textup{\uppercase\expandafter{\romannumeral3}}}} & 1\end{pmatrix}, &k\in(-\infty,-c_l)\cup(c_l,+\infty),  \\
            \begin{pmatrix}0 & -r_{-}^*e^{-2itg_{\textup{\uppercase\expandafter{\romannumeral3}}}}\\ r_{+}e^{2itg_{\textup{\uppercase\expandafter{\romannumeral3}}}} & 1 \end{pmatrix}, &k\in (-c_l,-c_{r})\cup(c_{r},c_l),\\
            \begin{pmatrix}0 & -1 \\ 1 & 0\end{pmatrix},  &k\in (-c_{r}, c_{r}).
            \end{cases}
    \end{equation}
    \item As $k\rightarrow\infty$ in $\mathbb{C}\setminus\mathbb{R}$, we have $M^{(1)}(k)=I+\mathcal{O}(k^{-1})$.
    \item $M^{(1)}(k)$ admits the same singular behavior as $M(k)$ at branch points $\pm c_{r}$.
\end{itemize}
\end{RHP}

\subsection{Second transformation: $M^{(1)}\to M^{(2)}$}
In contrast to the previous sections, it is not required to introduce an auxiliary $D$ function
to open lenses. Nevertheless, we still need to perform an analytic approximation for 
the reflection coefficient $r(k)$.
\begin{proposition}[Analytic approximation of $r$]\label{analytic extension of r RIII}
There exist continuous functions
\begin{equation*}
    r_a: \mathcal{R}_{\textup{\uppercase\expandafter{\romannumeral3}}_a}  \times \left(\overline{U_1^{(2)}}\cup\overline{U_2^{(2)}}\right) \rightarrow \mathbb{C} \; \text { and } \; r_r: \mathcal{R}_{\textup{\uppercase\expandafter{\romannumeral3}}_a} \times\left((-\infty,-c_r)\cup(c_r,+\infty)\right) \rightarrow \mathbb{C},
\end{equation*}
which satisfy the following properties:
\begin{itemize}
    \item [\rm (a)] $r(k)=r_a(\xi; k)+r_r(\xi; k)$ for all $(\xi; k) \in \mathcal{R}_{\textup{\uppercase\expandafter{\romannumeral3}}_a}  \times\{(-\infty,-c_r)\cup(c_r,+\infty)\}$.
    \item [\rm (b)] $r_a(-k)=r_a^*(k)$ for $k\in\overline{U_1^{(2)}}\cup\overline{U_2^{(2)}}.$
    \item [\rm (c)]  For all $\xi \in \mathcal{R}_{\textup{\uppercase\expandafter{\romannumeral3}}_a}$, the function $r_a: U_1^{(2)}\cup U_2^{(2)} \rightarrow \mathbb{C}$ is holomorphic. Moreover, for $(\xi;k)\in\mathcal{R}_{\textup{\uppercase\expandafter{\romannumeral3}}_a}\times\left(\overline{U_1^{(2)}}\cup\overline{U_2^{(2)}}\right)$, we have 
\begin{equation*}
    \left|r_a(\xi; k)-r\left(\pm c_r\right)\right| \lesssim \left|k\mp c_r\right|^{1/2} \mathrm{e}^{t|\operatorname{Im} g_{\textup{\uppercase\expandafter{\romannumeral3}}}(\xi;k)|},
\end{equation*}
and
\begin{equation*}
    \left|r_a(\xi; k)\right| \lesssim \frac{\mathrm{e}^{t|\operatorname{Im} g_{\textup{\uppercase\expandafter{\romannumeral3}}}(\xi; k)|}}{1+|k|^2}.
\end{equation*}
\item [\rm (d)] For all $\xi \in \mathcal{R}_{\textup{\uppercase\expandafter{\romannumeral3}}_a}$, the function $r_r\in L^p\left((-\infty,-c_r)\cup(c_r,+\infty)\right),\; p\in[1,+\infty]$, and as $t \rightarrow +\infty$,
\begin{equation*}
    \left\|r_r(\xi;k)\right\|_{L^p\left((-\infty,-c_r)\cup(c_r,+\infty)\right)}=  \mathcal{O}\left(t^{-3}\right).
\end{equation*}
\end{itemize} 
\end{proposition}

\begin{remark}
    It should be noted that item (d) of Proposition \ref{analytic extension of r RIII} provides a decay rate that is different from those 
    in Proposition \ref{analytic extension of r RI} and Proposition \ref{analytic extension of r RII}. 
    This stronger decay rate is essential, since it would ensure that 
    the sub-leading contribution only arises from the local parametrix near the critical points    
    for $\xi\in\mathcal{R}_{\textup{\uppercase\expandafter{\romannumeral3}}_a}$. 
    The validity of this item relies on the initial data possessing sufficient smoothness and decay, 
    as specified in Assumption \ref{assumption on q_0}. 
\end{remark}

Now we are ready to open lenses. Define $M^{(2)}(x,t;k)$ by
\begin{equation}\label{def:M2 RIII}
    M^{(2)}(k)=M^{(1)}(k)G(k),
\end{equation}
where
\begin{equation*}
    G(k):=
        \begin{cases}
        \begin{pmatrix}1 & 0 \\ -r_ae^{2itg_{\textup{\uppercase\expandafter{\romannumeral3}}}} & 1\end{pmatrix}, & k\in U^{(2)}_1\cup U^{(2)}_2, \\
        \begin{pmatrix}1 & -r_a^{*}e^{-2itg_{\textup{\uppercase\expandafter{\romannumeral3}}}} \\ 0 & 1\end{pmatrix}, & k\in U_1^{(2)*}\cup U_2^{(2)*}, \\
        I, &\textnormal{elsewhere}.
        \end{cases}
\end{equation*}
Then RH conditions for $M^{(2)}$ are listed below.
\begin{RHP}
    \hfill
\begin{itemize}
    \item $M^{(2)}(k)$ is holomorphic for $k\in\mathbb{C}\backslash\Gamma^{(2)}$, where $\Gamma^{(2)}:=\cup_{j=1}^{2}(\Gamma^{(2)}_j\cup\Gamma_j^{(2)*})\cup\mathbb{R}$;
    see Figure \ref{fig:U_j domain RIII} for an illustration.
    \item $M^{(2)}(k)$ has continuous boundary values $M_{\pm}^{(2)}(k)$ on $\Gamma^{(2)}$ with the jump condition
    \begin{equation*}
        M^{(2)}_{+}(k)=M^{(2)}_{-}(k)V^{(2)}(k),
    \end{equation*}
    where
    \begin{equation*}
        V^{(2)}(k)=
            \begin{cases}
            \begin{pmatrix} 1 & 0\\ r_ae^{2itg_{\textup{\uppercase\expandafter{\romannumeral3}}}} & 1\end{pmatrix}, &k\in\Gamma^{(2)}_1\cup\Gamma^{(2)}_2,  \\
            \begin{pmatrix} 1 & -r_a^*e^{-2itg_{\textup{\uppercase\expandafter{\romannumeral3}}}}\\ 0 & 1\end{pmatrix}, & k\in\Gamma^{(2)*}_1\cup\Gamma^{(2)*}_2,  \\
            \begin{pmatrix}
                1-r_rr^*_r&-r_r^*e^{-2itg_{\textup{\uppercase\expandafter{\romannumeral3}}}}\\r_re^{2itg_{\textup{\uppercase\expandafter{\romannumeral3}}}}&1
            \end{pmatrix},&k\in(-\infty,-c_r)\cup(c_r,+\infty),\\
            \begin{pmatrix}0 & -1 \\ 1 & 0\end{pmatrix},  & k\in (-c_{r}, c_{r}).
            \end{cases}
    \end{equation*}
    \item As $k\rightarrow\infty$ in $\mathbb{C}\backslash\Gamma^{(2)}$, we have $M^{(2)}(k)=I+\mathcal{O}(k^{-1})$.
    \item As $k\rightarrow\pm c_{r}$, $M^{(2)}(k)=\mathcal{O}((k\mp c_{r})^{-1/4})$.
\end{itemize}
\end{RHP}

\begin{figure}[H]
\begin{center}
    \tikzset{every picture/.style={line width=0.75pt}} 
    \begin{tikzpicture}[x=0.75pt,y=0.75pt,yscale=-1,xscale=1]
    \draw    (171,150) -- (481,152) ;
    \draw [shift={(332,151.04)}, rotate = 180.37] [color={rgb, 255:red, 0; green, 0; blue, 0 }  ][line width=0.75]    (10.93,-3.29) .. controls (6.95,-1.4) and (3.31,-0.3) .. (0,0) .. controls (3.31,0.3) and (6.95,1.4) .. (10.93,3.29)   ;
    \draw    (481,152) -- (588,222) ;
    \draw [shift={(539.52,190.28)}, rotate = 213.19] [color={rgb, 255:red, 0; green, 0; blue, 0 }  ][line width=0.75]    (10.93,-3.29) .. controls (6.95,-1.4) and (3.31,-0.3) .. (0,0) .. controls (3.31,0.3) and (6.95,1.4) .. (10.93,3.29)   ;
    \draw    (582,86) -- (481,152) ;
    \draw [shift={(537.36,115.17)}, rotate = 146.84] [color={rgb, 255:red, 0; green, 0; blue, 0 }  ][line width=0.75]    (10.93,-3.29) .. controls (6.95,-1.4) and (3.31,-0.3) .. (0,0) .. controls (3.31,0.3) and (6.95,1.4) .. (10.93,3.29)   ;
    \draw    (171,150) -- (64.26,79.61) ;
    \draw [shift={(123.47,118.66)}, rotate = 213.4] [color={rgb, 255:red, 0; green, 0; blue, 0 }  ][line width=0.75]    (10.93,-3.29) .. controls (6.95,-1.4) and (3.31,-0.3) .. (0,0) .. controls (3.31,0.3) and (6.95,1.4) .. (10.93,3.29)   ;
    \draw    (69.76,215.63) -- (171,150) ;
    \draw [shift={(125.41,179.55)}, rotate = 147.05] [color={rgb, 255:red, 0; green, 0; blue, 0 }  ][line width=0.75]    (10.93,-3.29) .. controls (6.95,-1.4) and (3.31,-0.3) .. (0,0) .. controls (3.31,0.3) and (6.95,1.4) .. (10.93,3.29)   ;
    \draw    (481,152) -- (604,151) ;
    \draw [shift={(548.5,151.45)}, rotate = 179.53] [color={rgb, 255:red, 0; green, 0; blue, 0 }  ][line width=0.75]    (10.93,-3.29) .. controls (6.95,-1.4) and (3.31,-0.3) .. (0,0) .. controls (3.31,0.3) and (6.95,1.4) .. (10.93,3.29)   ;
    \draw   (48,151) -- (171,150) ;
    \draw [shift={(115.5,150.45)}, rotate = 179.53] [color={rgb, 255:red, 0; green, 0; blue, 0 }  ][line width=0.75]    (10.93,-3.29) .. controls (6.95,-1.4) and (3.31,-0.3) .. (0,0) .. controls (3.31,0.3) and (6.95,1.4) .. (10.93,3.29)   ;
    \draw  [dash pattern={on 0.84pt off 2.51pt}]  (431,63) .. controls (405,63) and (404,244) .. (437,244) ;
    \draw  [dash pattern={on 0.84pt off 2.51pt}]  (225,65) .. controls (250,66) and (250,246) .. (222,247) ;
    \draw (399,155.4) node [anchor=north west][inner sep=0.75pt]  [font=\tiny]  {$\eta_{r} $};
    \fill (412,152) circle (1.2pt);
    \draw (224,153.4) node [anchor=north west][inner sep=0.75pt]  [font=\tiny]  {$-\eta_{r} $};
     \fill (244,150.5) circle (1.2pt);
    \draw (173,153.4) node [anchor=north west][inner sep=0.75pt]  [font=\tiny]  {$-c_{r}$};
    
    \draw (476,158.4) node [anchor=north west][inner sep=0.75pt]  [font=\tiny]  {$c_{r}$};
    \draw (541,73.4) node [anchor=north west][inner sep=0.75pt]  [font=\tiny]  {$\Gamma _{1}^{( 2)}$};
    \draw (542,209.4) node [anchor=north west][inner sep=0.75pt]  [font=\tiny]  {$\Gamma _{1}^{( 2) *}$};
    \draw (106,78.4) node [anchor=north west][inner sep=0.75pt]  [font=\tiny]  {$\Gamma _{2}^{( 2)}$};
    \draw (95,204.4) node [anchor=north west][inner sep=0.75pt]  [font=\tiny]  {$\Gamma _{2}^{( 2) *}$};
    \draw (543,114.4) node [anchor=north west][inner sep=0.75pt]  [font=\tiny]  {$U_{1}^{( 2)}$};
    \draw (546,162.4) node [anchor=north west][inner sep=0.75pt]  [font=\tiny]  {$U_{1}^{( 2) *}$};
    \draw (85,112.4) node [anchor=north west][inner sep=0.75pt]  [font=\tiny]  {$U_{2}^{( 2)}$};
    \draw (85,163.4) node [anchor=north west][inner sep=0.75pt]  [font=\tiny]  {$U_{2}^{( 2) *}$};
  
     \fill (170,150) circle (1.2pt);
   
        \fill (481,152) circle (1.2pt);
    \end{tikzpicture}
\caption{The jump contours of RH problem for $M^{(2)}$ when $\xi\in\mathcal{R}_{\textup{\uppercase\expandafter{\romannumeral3}}_a}$.}\label{fig:U_j domain RIII}
\end{center}
\end{figure}

\subsection{Analysis of RH problem for $M^{(2)}$}
It's also noted that $V^{(2)}\rightarrow I$ as $t\rightarrow+\infty$ on the contours
$\Gamma^{(2)}_j\cup\Gamma_j^{(2)*}$ for $j=1,2$. Moreover, by item (d) in Proposition \ref{analytic extension of r RIII}, 
we have that, as $t\rightarrow +\infty$, $V^{(2)}(k)\to I$ for $k\in(-\infty,-c_r)\cup (c_r, +\infty)$.
Therefore, it follows that $M^{(2)}$ is approximated, to the leading order, by the global parametrix $M^{(\infty)}$ given below. 


\subsubsection{Global parametrix}
\begin{RHP}\label{RHP:Minfty RIII}
\hfill
\begin{itemize}
    \item $M^{(\infty)}(k)$ is holomorphic for $k\in\mathbb{C}\backslash[-c_{r},c_{r}]$.
    \item $M^{(\infty)}(k)$ has continuous boundary values on $(-c_{r},c_{r})$ satisfying the following jump condition
    \begin{align*}
        M_{+}^{(\infty)}(k)=M_{-}^{(\infty)}(k)
        \begin{pmatrix}
            0 & -1\\
            1 & 0
        \end{pmatrix}.
    \end{align*}
    \item As $k\rightarrow\infty$ in $\mathbb{C}\setminus[-c_{r}, c_{r}]$, we have $M^{(\infty)}(k)=I+\mathcal{O}(k^{-1})$.
    \item As $k\rightarrow \pm c_{r}$, $M^{(\infty)}(k)=\mathcal{O}((k\mp c_{r})^{-1/4})$. 
\end{itemize}
\end{RHP}
Then the unique solution of $M^{(\infty)}$ is given by
\begin{align}\label{equ:sol of Minfty RIII}
    M^{(\infty)}=\Delta_r(k)
\end{align}
with $\Delta_{r}(k)$ defined by \eqref{equ:Delta_j} for the subscript $j$ being chosen as $r$.
For later use, the higher order asymptotic behavior of $M^{(\infty)}$ as 
$k\rightarrow c_r$ is
\begin{equation}\label{equ:Delta_eta asy to c_r}
    \begin{aligned}
    M^{(\infty)}(k)&=\frac{(2c_r)^{\frac{1}{4}}}{2(k-c_r)^{1/4}}\left[
        \begin{pmatrix}
            1 & -i\\
            i & 1
        \end{pmatrix}+\frac{(k-c_r)^{\frac{1}{2}}}{(2c_r)^{\frac{1}{2}}}
        \begin{pmatrix}
            1 & i\\
            -i & 1
        \end{pmatrix}+
        \frac{k-c_r}{8c_r}
        \begin{pmatrix}
            1 & -i\\
            i & 1
        \end{pmatrix}\right.\\
        &+\left.\frac{(k-c_r)^{\frac{3}{2}}}{4(2c_r)^{3/2}}
        \begin{pmatrix}
            -1 & -i\\
            i & -1
        \end{pmatrix}
        +\mathcal{O}\left(\left(k-c_r\right)^2\right)
    \right].
\end{aligned}
\end{equation}
\subsubsection{Local parametrices near $\pm c_{r}$}
Let
\begin{align*}
    D_\varrho(c_r)=\left\{k: |k-c_r|<\varrho \right\}, \quad D_\varrho(-c_r)=\left\{k: |k+c_r|<\varrho \right\}
\end{align*}
be two small disks around $\pm c_r$ respectively, where
\begin{equation*}
\varrho<\frac{1}{3}{\rm min}\left\{ |c_r-\eta_r|, |c_l-c_r|  \right\}.
\end{equation*}
For $j\in\{r,l\}$, we intend to solve the following local RH problem for $M^{(j)}$.
\begin{RHP}
\hfill
\begin{itemize}
    \item $M^{(j)}(k)$ is holomorphic for $k\in\overline{D_{\varrho}(\pm c_r)}\setminus\Gamma^{(j)}$ (``$+$'' for $j=r$, and ``$-$'' for $j=l$),
    where
    \begin{align}\label{def:Gamma^(ell) RIII}
        \Gamma^{(r)}:=D_\varrho(c_r)\cap \Gamma^{(2)},\quad \Gamma^{(l)}:=D_\varrho(-c_r)\cap \Gamma^{(2)};
    \end{align}
    \item $M^{(j)}(k)$ has continuous boundary values $M_{\pm}^{(j)}(k)$ on $k\in \Gamma^{(j)}$ with the jump condition
    \begin{align*}
        M^{(j)}_{+}(k)=M^{(j)}_{-}(k)V^{(2)}(k)\big|_{\Gamma^{(j)}}.
    \end{align*}
    \item As $k\rightarrow\infty$ in $\mathbb{C}\setminus\Gamma^{(j)}$, we have $M^{(j)}(k)=I+\mathcal{O}(k^{-1})$.
\end{itemize}
\end{RHP}
 
In the rest part of this section, we focus on the construction of $M^{(r)}$ near $k=c_r$, and 
the construction of $M^{(l)}$ near $-c_r$ can be obtained by using the symmetry.
Introduce the change of variables:
\begin{equation}\label{change of varibale RIII}
    f(k)=-\frac{1}{4}g_{\textup{\uppercase\expandafter{\romannumeral3}}}^2(k),\quad \zeta_r(\xi;k)=t^2f(\xi;k),
\end{equation}
which implies that 
\begin{equation*}
    -4\zeta_r^{\frac{1}{2}}=\begin{cases}
        2itg_{\textup{\uppercase\expandafter{\romannumeral3}}},& \mathrm{Im}k>0,\\
         -2itg_{\textup{\uppercase\expandafter{\romannumeral3}}},& \mathrm{Im}k<0,
    \end{cases}
\end{equation*}
where the cut $(\cdot)^{1/2}$ runs along $\mathbb{R}^{-}$.
As $k\to c_r$, it follows that
\begin{equation}\label{equ:asy f III}
    f(\xi;k)=-18c_r(c_r^2+2\xi)^2(k-c_r)\left[1+\frac{19c_r^2+6\xi}{6c_r(c_r^2+2\xi)}(k-c_r)+\mathcal{O}\left((k-c_r)^2\right)\right].
\end{equation}

Under the change of variable \eqref{change of varibale RIII}, the construction of the local parametrix proceeds in a manner analogous to that described in Section \ref{subsubsec: local para pm eta}. 
For brevity, we outline only the essential steps below.
Define the local parametrix $\tilde{M}^{(r)}$ by
\begin{equation}\label{R3Localcr}
    \tilde M^{(r)}(x,t;k):=P^{(r)}(\xi;k)\Psi^{({\rm Be})}_{\alpha}\left(\zeta_{r}\left(\xi;k\right)\right)Q^{(r)}(\xi;k), \quad \alpha=-\frac{\arg r(c_r)}{\pi} \ {\rm with} \ |r(c_r)|=1.
\end{equation}
Here $P^{(r)}$ is the matching factor given by
\begin{equation}\label{equ:Pr RIII}
    P^{(r)}(k):=M^{(\infty)}(k){Q^{(r)}}(k)^{-1} N^{-1}\zeta_{r}(k)^\frac{\sigma_3}{4}(2\pi)^{\frac{\sigma_3}{2}},
\end{equation}
with $M^{(\infty)}$ defined in \eqref{equ:sol of Minfty RIII}, and
\begin{equation}\label{def:Q^{(r)} III}
    Q^{(r)}(k):=
        \begin{cases}
        \sigma_3,   &k\in\mathbb{C}^{+}\cap \overline{D_{\varrho}\left(c_r\right)}, \\
       \sigma_1,   &k\in\mathbb{C}^{-}\cap \overline{D_{\varrho}\left(c_r\right)}.\\
        \end{cases}
\end{equation}
The function $\tilde{M}^{(r)}$ serves as an approximation to $M^{(r)}$ for sufficiently large $t$.
From the definition \eqref{R3Localcr}, its jump matrix $\tilde V^{(r)}(k)$ is given by
\begin{equation*}
        \tilde V^{(r)}(k)=
            \begin{cases}
            \begin{pmatrix} 1 & 0\\ e^{-\alpha\pi i}e^{2itg_{\textup{\uppercase\expandafter{\romannumeral3}}}} & 1\end{pmatrix}, &k\in\Gamma^{(2)}_1\cap D_\varrho(c_r),  \\
            \begin{pmatrix} 1 & -e^{-\alpha\pi i}e^{-2itg_{\textup{\uppercase\expandafter{\romannumeral3}}}}\\ 0 & 1\end{pmatrix}, & k\in\Gamma^{(2)*}_1\cap D_\varrho(c_r),  \\
            \begin{pmatrix}0 & -1 \\ 1 & 0\end{pmatrix},  & k\in (c_{r}-\varrho, c_{r}),\\
             I,&k\in(c_r,c_r+\varrho).
            \end{cases}
\end{equation*}
It can be verified that $P^{(r)}(k)$ defined in \eqref{equ:Pr RIII} is analytic on $D_{\varrho}\left(c_r\right)$. 
Moreover, following the steps to obtain \eqref{asymptotic for P^R}, we can also derive that as $k\to c_r$
\begin{equation}\label{asymptotic for P^R III}
    P^{(r)}(k)=\begin{pmatrix}
      ( 1-i)(3t)^{\frac{1}{2}}(c_r^3\pi+2c_r\xi\pi)^{\frac{1}{2}}&\frac{\frac{1-i}{4}}{(3t)^{\frac{1}{2}}(c_r^3\pi+2c_r\xi\pi)^{\frac{1}{2}}}\\
       ( 1+i)(3t)^{\frac{1}{2}}(c_r^3\pi+2c_r\xi\pi)^{\frac{1}{2}}&\frac{\frac{-1-i}{4}}{(3t)^{\frac{1}{2}}(c_r^3\pi+2c_r\xi\pi)^{\frac{1}{2}}}
    \end{pmatrix}+\mathcal{O}(|k-c_r|),\quad k\to c_r.
\end{equation}

Similarly, the RH problem for $\tilde M^{(l)}$ (an approximation of $M^{(l)}$) can be solved in a similar manner.
Define the local parametrix
\begin{equation}\label{R3Localcl}
    \tilde M^{(l)}(x,t;k):=P^{(l)}(\xi;k)\Psi^{({\rm Be})}_{\alpha}\left(\zeta_{l}\left(\xi;k\right)\right)Q^{(l)}(\xi;k),
\end{equation}
where
\begin{equation*}
    \zeta_l(\xi;k)=t^2f(\xi;k),\quad P^{(l)}(k)=M^{(\infty)}(k){Q^{(l)}}(k)^{-1} N^{-1}\zeta_{l}(k)^\frac{\sigma_3}{4}(2\pi)^{\frac{\sigma_3}{2}}
\end{equation*}
with
\begin{equation*}
     Q^{(l)}(k):=
        \begin{cases}
        \sigma_1,   &k\in\mathbb{C}^{+}\cap \overline{D_{\varrho}\left(-c_r\right)}, \\
       \sigma_3,   &k\in\mathbb{C}^{-}\cap \overline{D_{\varrho}\left(-c_r\right)}.\\
        \end{cases}
\end{equation*}
As $k\to -c_r$, we have the following asymptotics for $P^{(l)}$:
\begin{equation}\label{asymptotic for P^l III}
     P^{(l)}(k)=\begin{pmatrix}
     \frac{\frac{1+i}{4}}{(3t)^{\frac{1}{2}}(c_r^3\pi+2c_r\xi\pi)^{\frac{1}{2}}}&(- 1-i)(3t)^{\frac{1}{2}}(c_r^3\pi+2c_r\xi\pi)^{\frac{1}{2}}\\\frac{\frac{-1+i}{4}}{(3t)^{\frac{1}{2}}(c_r^3\pi+2c_r\xi\pi)^{\frac{1}{2}}}
       & (-1+i)(3t)^{\frac{1}{2}}(c_r^3\pi+2c_r\xi\pi)^{\frac{1}{2}}
    \end{pmatrix}+\mathcal{O}(|k+c_r|),\quad k\to -c_r.
\end{equation}
Now we introduce the following lemma, which describes the matching conditions between the local parametrices $\tilde M^{(j)}$, $j\in\{r,l\}$ and the global parametrix $M^{(\infty)}$, and the approximation of the jump matrices.
\begin{lemma}\label{Lemma v3-vr-III}
    For $\xi\in\mathcal{R}_{\textup{\uppercase\expandafter{\romannumeral3}}_a}$ and $t>0$, the function $\tilde M^{(j)}$ has the following properties for $j\in\{l,r\}$:
    \begin{itemize}
        \item [\rm (a)] As $t\to\infty$, 
        \begin{equation*}
            \|\tilde M^{(j)}(k)M^\infty(k)^{-1}-I\|_{L^\infty(\partial D_\varrho(\pm c_r))}=\mathcal{O}(t^{-1}).
        \end{equation*}
        Here, the script ``$\pm$'' stands for ``$+$'' when $j=r$, and ``$-$'' when $j=l$.
 \item [\rm (b)] Across $\Gamma^{(j)}$, the jump matrix $\tilde V^{(j)}$ of $\tilde M^{(j)}(x,t;k)$ satisfies
 \begin{align*}
    & \|V^{(2)}-\tilde V^{(j)}\|_{L^1(\Gamma^{(j)})}=\mathcal{O}(t^{-3}),\\
    & \|V^{(2)}-\tilde V^{(j)}\|_{L^2(\Gamma^{(j)})}=\mathcal{O}(t^{-4}),\\
     & \|V^{(2)}-\tilde V^{(j)}\|_{L^\infty(\Gamma^{(j)})}=\mathcal{O}(t^{-1}).
 \end{align*}
\end{itemize}
\end{lemma}
\begin{proof}
    The proof is similar to that of Lemma \ref{Lemma v3-vr-II}.
\end{proof}

\subsection{Small norm RH problem for $M^{(err)}$}\label{subsec:small norm RH RIII}
Define
\begin{align}\label{def:Merr RIII}
    M^{(err)}(x,t;k):=
            \begin{cases}
             M^{(2)}(x,t;k)\left(M^{(\infty)}\left(x,t;k\right)\right)^{-1}, & k\in \mathbb{C}\setminus\left(D_{\varrho}\left(c_r\right)\cup D_{\varrho}\left(-c_r\right)\right),\\
             M^{(2)}(x,t;k)\left(\tilde M^{(r)}\left(x,t;k\right)\right)^{-1}, & k\in D_{\varrho}\left(c_r\right),\\
             M^{(2)}(x,t;k)\left(\tilde M^{(l)}\left(x,t;k\right)\right)^{-1}, & k\in D_{\varrho}\left(-c_r\right).
            \end{cases}
    \end{align}
 It is readily seen that $M^{(err)}$ satisfies the following RH problem.
    \begin{RHP}\label{RHP: Merr RIII}
    \hfill
    \begin{itemize}
        \item $M^{(err)}(k)$ is holomorphic for $k\in\mathbb{C}\setminus\Gamma^{(err)}$, where
        \begin{align}
        \Gamma^{(err)}:=\partial D_{\varrho}\left(c_r\right)\cup\partial D_{\varrho}\left(-c_r\right)\cup \Gamma^{(3)}.
        \end{align}
        \item $M^{(err)}(k)$ has continuous boundary values $M^{(err)}_\pm(k)$ on $k\in \Gamma^{(err)}$ with the jump condition
        \begin{equation*}
            M^{(err)}_{+}(k)=M^{(err)}_{-}(k)V^{(err)}(k),
        \end{equation*}
        where
         \begin{align}\label{equ:jump Verr RIII}
           V^{(err)}(k)=
                \begin{cases}
                 M^{(\infty)}_-(k)V^{(2)}(k){M^{(\infty)}_+(k)}^{-1}, & k\in\Gamma^{(3)}\backslash \left(\overline{D_\varrho(c_r)}\cup \overline{D_\varrho(-c_r)}\right),\\
                 \tilde M^{(r)}(k){M^{(\infty)}(k)}^{-1}, & k\in\partial D_\varrho(c_r),\\
               \tilde  M^{(l)}(k){M^{(\infty)}(k)}^{-1}, & k\in\partial D_\varrho(-c_r),\\
                 \tilde M^{(r)}_-(k)V^{(2)}(k){\tilde M^{(r)}_+(k)}^{-1}, & k\in\Gamma^{(3)}\cap D_\varrho(c_r),\\
                \tilde M^{(l)}_-(k)V^{(2)}(k){\tilde M^{(l)}_+(k)}^{-1}, & k\in\Gamma^{(3)}\cap D_\varrho(-c_r).
                \end{cases}    
        \end{align}
        \item As $k\rightarrow\infty$ in $k\in\mathbb{C}\setminus\Gamma^{(err)}$, we have $M^{(err)}(k)=I+\mathcal{O}(k^{-1})$.
        \item As $k\rightarrow\pm c_r$, we have $M^{(err)}(k)=\mathcal{O}(1)$.
    \end{itemize}
    \end{RHP}
It follows from Lemma \ref{Lemma v3-vr-III} that for $p=1,2,\infty$,
\begin{align}\label{est:Verr-I RIII}
    \Vert V^{(err)}(k)-I \Vert_{L^p}=\begin{cases}
         \mathcal{O}(e^{-ct}), & k\in\Gamma^{(err)}\setminus\left(\overline{D_\varrho(c_r)}\cup \overline{D_\varrho(-c_r)}\cup\mathbb{R}\right),\\
         \mathcal{O}(t^{-1}), &k\in\partial D_\varrho(c_r)\cup \partial D_\varrho(-c_r),\\
         \mathcal{O}(t^{-3}), &k\in\mathbb{R},\\
         \mathcal{O}(t^{\omega_p}), &k\in \Gamma^{(err)}\cap \left(D_\varrho(c_r)\cup D_\varrho(-c_r)\right),
        \end{cases}
\end{align}
where $\omega_1=-3$, $\omega_2=-4$, $\omega_\infty=-1$ and $c>0$. 

The solution for $M^{(err)}$ can be given by
\begin{equation}\label{BCsolforErrorR3}
    M^{(err)}(k)=I+\frac{1}{2\pi i}\int_{\Gamma^{(err)}}\frac{\mu(s)(V^{(err)}(s)-I)}{s-k}\dif s,
\end{equation}
where $\mu\in I+L^{2}(\Gamma^{(err)})$ is the unique solution of \eqref{equ:fredholm ope equ}.
Analogous to the estimate \eqref{est:mu-I RI}, we have
\begin{equation}\label{est:mu-I RIII}
    \Vert C_{(err)}\Vert \leqslant \Vert C_{-} \Vert_{L^{2}(\Gamma^{(err)})\rightarrow L^{2}(\Gamma^{(err)})}\Vert V^{(err)}-I \Vert_{L^{\infty}(\Gamma^{(err)})}
    \lesssim \mathcal{O}(t^{-1}),
\end{equation}
which implies that $I-C_{(err)}$ is invertible for large positive $t$ in this case.

Now we still give a proposition to evaluate the asymptotic behavior 
of $M^{(err)}$ at infinity, which is analogous to Proposition \ref{prop: result of Merr_1 RII}.
\begin{proposition}\label{prop: result of Merr_1 RIII}
    With $M^{(err)}_1$ defined in \eqref{equ:Merr_1 RI}, we have, as $t\rightarrow+\infty$,
    \begin{equation}\label{equ:Merr_1 RIII}
        M^{(err)}_1=\begin{pmatrix}
            -\frac{(\mu-2)it^{-1}}{96(c_r^2+2\xi)}&\frac{(\mu-1)(\mu+9)it^{-2}}{9216c_r(c_r^2+2\xi)^2}\\
            -\frac{(\mu-1)(\mu+9)it^{-2}}{9216c_r(c_r^2+2\xi)^2}&\frac{(\mu-2)it^{-1}}{96(c_r^2+2\xi)}
        \end{pmatrix}+\mathcal{O}(t^{-3}),
    \end{equation}
    where $\mu=\frac{4\left[\arg r(c_r)\right]^2}{\pi^2}$.
\end{proposition}
\begin{proof}
    Let us divide $M^{(err)}_1$ into four parts by
    \begin{align*}
        &I_1:=-\frac{1}{2\pi i}\int_{\Gamma^{(err)}}\left(\mu(s)-I\right)\left(V^{(err)}(s)-I\right)\dif s, \\
        &I_2:=-\frac{1}{2\pi i}\int_{\Gamma^{(err)}\setminus\left(D_\varrho(c_r)\cup D_\varrho(-c_r)\right)}\left(V^{(err)}(s)-I\right)\dif s, \\
        &I_3:=-\frac{1}{2\pi i}\oint_{\partial D_\varrho(c_r)\cup \partial D_\varrho(-c_r)}\left(V^{(err)}(s)-I\right)\dif s,\\
        &I_4:=-\frac{1}{2\pi i}\int_{\Gamma^{(err)}\cap\left(D_\varrho(c_r)\cup D_\varrho(-c_r)\right)}\left(V^{(err)}(s)-I\right)\dif s.
       \end{align*}
From \eqref{est:mu-I RIII}, \eqref{est:Verr-I RIII}, and item (d) of Proposition \ref{analytic extension of r RIII}, we deduce that $I_1 = \mathcal{O}(t^{-3})$, $I_2 = \mathcal{O}(e^{-ct})$, and $I_4 = \mathcal{O}(t^{-3})$.

Now we turn to estimate $I_3$. Similar to the steps shown in Proposition \ref{prop: result of Merr_1 RII}, 
we divide $I_3:=I_3^{(r)}+I_3^{(l)}$, where
$I_3^{(r)}$ and $I_3^{(l)}$ are two contour integrations along $\partial D_\varrho(c_r)$ and $\partial D_\varrho(-c_r)$
respectively. We focus the analysis on $I^{(r)}_3$.
To proceed, let us define
\begin{equation}\label{def:T^{(r)}_j_III}   
    T^{(r)}_j(\xi;k):=f^{-j/2}M^{(\infty)}\left(Q^{(r)}\right)^{-1} \Psi_{\alpha, \hspace*{0.1em} j}^{\mathrm{(Be)}} Q^{(r)}\left(M^{(\infty)}\right)^{-1}, \quad j=1,2,
\end{equation}
where $f$, $M^{(\infty)}$, ${\Psi_{\alpha, \hspace*{0.1em} j}^{\mathrm{(Be)}}}$ and $Q^{(r)}$ are defined
in \eqref{change of varibale RIII}, \eqref{equ:sol of Minfty RIII}, \eqref{bessel asy} and \eqref{def:Q^{(r)} III}, respectively. 
It then follows from \eqref{equ:jump Verr RIII} and residue theorem that
\begin{align}\label{def:Ir1_III}  
    I_3^{(r)}&=t^{-1}\textnormal{Res}_{k=c_r}T^{(r)}_{1}(\xi;k)+t^{-2}\textnormal{Res}_{k=c_r}T^{(r)}_{2}(\xi;k)+\mathcal{O}(t^{-3}).
\end{align}
It remains to estimate  $\textnormal{Res}_{k=c_r}T^{(r)}_{1}(\xi;k)$ and $\textnormal{Res}_{k=c_r}T^{(r)}_{2}(\xi;k)$.
By the expression \eqref{bessel asy}, we rewrite
\begin{equation}\label{equ:Tr1 new form_III}
    T^{(r)}_{j}(\xi;k)=P^{(r)}(\xi,1;k)(2\pi)^{-\frac{\sigma_3}{2}}f^{-\frac{\sigma_3}{4}}N\frac{\Psi_{\alpha,j}^{(\text{Be})}}{f^{\frac{j}{2}}}N^{-1}f^{\frac{\sigma_3}{4}}(2\pi)^{\frac{\sigma_3}{2}}\left({P^{(r)}(\xi,1;k)}\right)^{-1},\quad j=1,2.
\end{equation}
where $P^{(r)}(x,1;k)$ is the holomorphic factor near $\eta$,
which is obtained by setting $t=1$ in \eqref{equ:Pr RIII}, i.e.,
\begin{equation*}
    P^{(r)}(\xi,1;k):=M^{(\infty)}(x,1;k)\left({Q^{(r)}(x,1;k)}\right)^{-1}N^{-1}(f(x,1;k))^{\frac{\sigma_3}{4}}(2\pi)^{\frac{\sigma_3}{2}}.
\end{equation*}
On account of \eqref{change of varibale RIII}, we obtain that, for $k\rightarrow c_r$, the middle factor of $T^{(r)}_{j}$ defined in \eqref{equ:Tr1 new form_III}
is given by
\begin{equation}\label{equ:middle factor ktoeta III}
    \begin{aligned}
    &f^{-\frac{\sigma_3}{4}}N\frac{\Psi_{\alpha,1}^{(\text{Be})}}{f^{\frac{1}{2}}}N^{-1}f^{\frac{\sigma_3}{4}}=\frac{1}{32}\begin{pmatrix}
        -\mu f^{-\frac{1}{2}}&(\mu-2)if^{-1}\\(-\mu-6)i&\mu f^{-\frac{1}{2}}
    \end{pmatrix},\\
    &f^{-\frac{\sigma_3}{4}}N\frac{\Psi_{\alpha,2}^{(\text{Be})}}{f}N^{-1}f^{\frac{\sigma_3}{4}}=\frac{\mu-1}{512}\begin{pmatrix}
       (\mu-9) f^{-1}&0\\0&(\mu+15) f^{-1}
    \end{pmatrix},
\end{aligned}
\end{equation}
where $\mu=4\left[\arg r(c_r)\right]^2/{\pi^2}$. 
Since $P^{(r)}$ is analytic near $k = c_r$, only the terms involving $f^{-1}$ contribute to the residue of $T_j^{(r)}$ at $c_r$. Substituting $P^{(r)}(c_r)$ from \eqref{asymptotic for P^R III} and the asymptotic expansion of $f$ from \eqref{equ:asy f III} into \eqref{equ:Tr1 new form_III}, we obtain
\begin{align}\label{Res:Tr1_III}
    \textnormal{Res}_{k=c_r}T^{(r)}_{1}=\frac{\mu-2}{192(c_r^2+2\xi)}
    \begin{pmatrix}
      -i&1\\1& i
    \end{pmatrix},\quad \textnormal{Res}_{k=c_r}T^{(r)}_{2}=\frac{(\mu-1)(\mu+9)}{18432c_r(c_r^2+2\xi)^2}
    \begin{pmatrix}
      -1&i\\-i& -1
    \end{pmatrix}.
\end{align}
Similar to estimate $I^{(r)}_3$, we also have
\begin{align}\label{def:Il1_III}
I_3^{(l)}&=t^{-1}\textnormal{Res}_{k=-c_r}T^{(l)}_{1}+t^{-2}\textnormal{Res}_{k=-c_r}T^{(l)}_{2}+\mathcal{O}(t^{-3}).
\end{align}
where
\begin{align}\label{Res:Tl1-III}
  \textnormal{Res}_{k=-c_r}T^{(l)}_{1}=\frac{\mu-2}{192(c_r^2+2\xi)}
    \begin{pmatrix}
      -i&-1\\-1& i
    \end{pmatrix},\quad \textnormal{Res}_{k=-c_r}T^{(l)}_{2}=\frac{(\mu-1)(\mu+9)}{18432c_r(c_r^2+2\xi)^2}
    \begin{pmatrix}
      1&i\\-i& 1
    \end{pmatrix}.
\end{align}

Substituting \eqref{def:Ir1_III}, \eqref{Res:Tr1_III}, \eqref{def:Il1_III} and \eqref{Res:Tl1-III} into $I_3$,
it arrives at the desired result as stated in \eqref{equ:Merr_1 RIII}.
\end{proof}

\subsection{Proof of the part ($\textup{\uppercase\expandafter{\romannumeral3}}_a$) of Theorem \ref{thm:mainthm}}
By tracing back the transformations \eqref{def:M1 RIII}, \eqref{def:M2 RIII}, and \eqref{def:Merr RIII},
we conclude, for $k\in\mathbb{C}\setminus\Gamma^{(2)}$,
\begin{equation}
    M(k)=M^{(err)}M^{(\infty)}e^{-it(g_{\textup{\uppercase\expandafter{\romannumeral3}}}-\theta)\sigma_3}.
\end{equation}
Together with the reconstruction formula stated in \eqref{equ:recovering formula}, it
is accomplished that
\begin{align}
q(x,t)=2i\left[(M^{(err)}_1)_{12}+\lim_{k\rightarrow\infty}k\left(\Delta_{r}(k)\right)_{12}\right].
\end{align}
On the account of \eqref{equ:sol of Minfty RIII} and \eqref{equ:Merr_1 RIII}, we obtain
the part ($\textup{\uppercase\expandafter{\romannumeral3}}_a$) of Theorem \ref{thm:mainthm}.

\section{Asymptotic analysis of the RH problem for $M$ in $\mathcal{R}_{{\textup{\uppercase\expandafter{\romannumeral3}}}_b}$}\label{sec:asymptotic analysis in RIV}
To investigate large-time asymptotics of the Cauchy problem \eqref{equ:mkdv}--\eqref{boundaryconditions}
in the region $\mathcal{R}_{\textup{\uppercase\expandafter{\romannumeral3}}_b}$, we introduce the same 
function $g_{\textup{\uppercase\expandafter{\romannumeral3}}}$ as defined in \eqref{equ:g function RIII}, 
which has two two purely imaginary zeros $\eta_{r}(\xi)=\pm i\sqrt{3\xi+c_r^2/2}$ for $\xi\in\mathcal{R}_{\textup{\uppercase\expandafter{\romannumeral3}}_b}$. The signature table of the $g_{\textup{\uppercase\expandafter{\romannumeral3}}}$ function is illustrated in Figure \ref{fig:signs img RIV}.

\begin{figure}[H]
\begin{center}
    \tikzset{every picture/.style={line width=0.75pt}} 
    \begin{tikzpicture}[x=0.75pt,y=0.75pt,yscale=-1,xscale=1]
    \draw    (161,139) -- (477,139) ;
    \draw  [draw opacity=0] (209.6,239.24) .. controls (209.59,239.02) and (209.58,238.8) .. (209.58,238.57) .. controls (209.68,222.69) and (254.82,210.09) .. (310.41,210.44) .. controls (365.26,210.78) and (409.79,223.6) .. (410.86,239.21) -- (310.23,239.21) -- cycle ; \draw   (209.6,239.24) .. controls (209.59,239.02) and (209.58,238.8) .. (209.58,238.57) .. controls (209.68,222.69) and (254.82,210.09) .. (310.41,210.44) .. controls (365.26,210.78) and (409.79,223.6) .. (410.86,239.21) ;
    \draw  [draw opacity=0] (409.86,32.08) .. controls (409.88,32.3) and (409.89,32.52) .. (409.89,32.75) .. controls (409.88,48.63) and (364.8,61.48) .. (309.21,61.44) .. controls (254.36,61.41) and (209.77,48.83) .. (208.6,33.24) -- (309.23,32.67) -- cycle ; \draw   (409.86,32.08) .. controls (409.88,32.3) and (409.89,32.52) .. (409.89,32.75) .. controls (409.88,48.63) and (364.8,61.48) .. (309.21,61.44) .. controls (254.36,61.41) and (209.77,48.83) .. (208.6,33.24) ;
    \draw (171,145.4) node [anchor=north west][inner sep=0.75pt]  [font=\tiny]  {$-c_{l}$};
    \draw (424,143.4) node [anchor=north west][inner sep=0.75pt]  [font=\tiny]  {$c_{l}$};
    \draw (202.18,144.13) node [anchor=north west][inner sep=0.75pt]  [font=\tiny,rotate=-1.56]  {$-c_{r}$};
    \draw (400,144.4) node [anchor=north west][inner sep=0.75pt]  [font=\tiny]  {$c_{r}$};
    
      \fill (180,139) circle (1.2pt);
   
      \fill (213,139) circle (1.2pt);
   
     \fill (429,139) circle (1.2pt);
   
    \fill (406,139) circle (1.2pt);
    \draw (297,40) node [anchor=north west][inner sep=0.75pt]  [font=\tiny]  {$i\sqrt{3\xi+\frac{c_r^2}{2}}$};
    \draw (297,217.4) node [anchor=north west][inner sep=0.75pt]  [font=\tiny]  {$-i\sqrt{3\xi+\frac{c_r^2}{2}}$};
   
     \fill (306,61.5) circle (1.2pt);
   
  
      \fill (306,210.5) circle (1.2pt);
   
    \draw (297,17.4) node [anchor=north west][inner sep=0.75pt]    {$-$};
    \draw (297,93.4) node [anchor=north west][inner sep=0.75pt]    {$+$};
    \draw (297,175.4) node [anchor=north west][inner sep=0.75pt]    {$-$};
    \draw (297,246.4) node [anchor=north west][inner sep=0.75pt]    {$+$};
    \end{tikzpicture}
\caption{ Signature table of the function $\im g_{\textup{\uppercase\expandafter{\romannumeral3}}}(\xi;k)$ for $\xi\in\mathcal{R}_{\textup{\uppercase\expandafter{\romannumeral3}}_b}$.}\label{fig:signs img RIV}
\end{center}
\end{figure}

Define two contours $\Gamma_1=k_1+i\sqrt{3\xi+{c_r^2}/{2}}$, $\Gamma_1^*=k_1-i\sqrt{3\xi+{c_r^2}/{2}}$, which are parallel to $\mathbb{R}$
and two domains $U_1=\{k: 0<\im k<i\sqrt{3\xi+{c_r^2}/{2}}\}$, $U_1^{*}=\{k: -i\sqrt{3\xi+{c_r^2}/{2}}<\im k<0\}$; see Figure \ref{fig:deformtosaddle}
for an illustration. Since the $g_{\textup{\uppercase\expandafter{\romannumeral3}}}$ function takes the same form in both regions $\mathcal{R}_{\textup{\uppercase\expandafter{\romannumeral3}}_a}$ and $\mathcal{R}_{\textup{\uppercase\expandafter{\romannumeral3}}_b}$, 
our analysis starts with the matrix-valued function $M^{(1)}$ satisfying RH problem \ref{RHP:M1 RIII}.

In the region $\mathcal{R}_{\textup{\uppercase\expandafter{\romannumeral3}}_b}$, the signature table of $\im g_{\textup{\uppercase\expandafter{\romannumeral3}}}$ (see Figure~\ref{fig:signs img RIV}) 
indicates that the exponential oscillations in the jump matrices on $(-\infty,-c_r)\cup(c_r,+\infty)$ can be controlled by deforming the contours to the lines $\Gamma_1$ and $\Gamma_1^*$, which pass through the zeros at 
$k = \pm i\sqrt{3\xi + c_r^2/2}$. Specifically, we move the off-diagonal exponential terms from the real axis to these parallel lines, where they decay exponentially as $t \to +\infty$.
The following proposition is required to perform this deformation.
\begin{proposition}[Analytic approximation of $r$]\label{analytic extension of r RIII_b}There exist continuous functions
\begin{equation*}
    r_a: \mathcal{R}_{\textup{\uppercase\expandafter{\romannumeral3}}_b}  \times \left(\overline{U_1}\cup\overline{U_1^*}\right) \rightarrow \mathbb{C} \; \text { and } \; r_r: \mathcal{R}_{\textup{\uppercase\expandafter{\romannumeral3}}_b} \times\left((-\infty,-c_r)\cup(c_r,+\infty)\right) \rightarrow \mathbb{C},
\end{equation*}
which satisfy the following properties:
\begin{itemize}
    \item [\rm (a)] $r(k)=r_a(\xi; k)+r_r(\xi; k)$ for all $(\xi; k) \in \mathcal{R}_{\textup{\uppercase\expandafter{\romannumeral3}}_b}  \times\{(-\infty,-c_r)\cup(c_r,+\infty)\}$.

    \item [\rm (b)]  For all $\xi \in \mathcal{R}_{\textup{\uppercase\expandafter{\romannumeral3}}_b}$, the function $r_a: U_1\cup U_1^{*} \rightarrow \mathbb{C}$ is holomorphic. Moreover, for $k\in\overline{U_1}\cup\overline{U_1^{*}},$ 
\begin{equation*}
    \left|r_a(\xi; k)\right| \lesssim \frac{\mathrm{e}^{t|\operatorname{Im} g_{\textup{\uppercase\expandafter{\romannumeral3}}}(\xi; k)|}}{1+|k|^2}.
\end{equation*}
\item [\rm (c)] For all $\xi \in \mathcal{R}_{\textup{\uppercase\expandafter{\romannumeral3}}_b}$, the function $r_r\in L^p\left((-\infty,-c_r)\cup(c_r,+\infty)\right),\; p\in[1,+\infty]$, and as $t \rightarrow +\infty$,
\begin{equation*}
    \left\|r_r(\xi;k)\right\|_{L^p\left((-\infty,-c_r)\cup(c_r,+\infty)\right)} = \mathcal{O}\left(t^{-2}\right).
\end{equation*}
\end{itemize} 
\end{proposition}   
We are now ready to define the following transformation
\begin{equation*}
    M^{(2)}(x,t;k):=M^{(1)}(x,t;k)G(x,t;k),
\end{equation*}
where
\begin{equation*}
    G(k):=
        \begin{cases}
        \begin{pmatrix}1 & 0 \\ -r_ae^{2itg_{\textup{\uppercase\expandafter{\romannumeral3}}}} & 1\end{pmatrix}, & k\in U_1, \\
        \begin{pmatrix}1 & -r_a^*e^{-2itg_{\textup{\uppercase\expandafter{\romannumeral3}}}} \\  0& 1\end{pmatrix}, & k\in U_1^*, \\
        I, &\textnormal{elsewhere}.
        \end{cases}
\end{equation*}
Then RH conditions for $M^{(2)}$ are listed below.
\begin{RHP}
\hfill
\begin{itemize}
\item $M^{(2)}(k)$ is holomorphic for $k\in\mathbb{C}\backslash([-c_{r},c_{r}]\cup\Gamma_1\cup\Gamma_1^*)$; see Figure \ref{fig:deformtosaddle} for an illustration.
\item $M^{(2)}(k)$ has continuous boundary values $M_{\pm}^{(2)}(k)$ on $\mathbb{R}\cup\Gamma_1\cup\Gamma_1^*$ with the jump condition
\begin{equation*}
    M^{(2)}_{+}(k)=M^{(2)}_{-}(k)V^{(2)}(k),
\end{equation*}
where
\begin{equation*}
    V^{(2)}(k)=
        \begin{cases}
        \begin{pmatrix}1 & 0 \\ r_ae^{2itg_{\textup{\uppercase\expandafter{\romannumeral3}}}} & 1\end{pmatrix}, & k\in \Gamma_1, \\
        \begin{pmatrix}1 & -r_a^*e^{-2itg_{\textup{\uppercase\expandafter{\romannumeral3}}}} \\0  & 1\end{pmatrix}, & k\in \Gamma_1^*, \\
        \begin{pmatrix}0 & -1 \\ 1 & 0\end{pmatrix}, & k\in (-c_{r}, c_{r}),\\
         \begin{pmatrix}
                1-r_rr^*_r&-r_r^*e^{-2itg_{\textup{\uppercase\expandafter{\romannumeral3}}}}\\r_re^{2itg_{\textup{\uppercase\expandafter{\romannumeral3}}}}&1
            \end{pmatrix},&k\in(-\infty,-c_r)\cup(c_r,+\infty).
        \end{cases}
\end{equation*}
\item As $k\rightarrow\infty$ in $\mathbb{C}\backslash(\mathbb{R}\cup\Gamma_1\cup\Gamma_1^*)$, $M^{(2)}(k)=I+\mathcal{O}(k^{-1})$.
\item As $k\rightarrow \pm c_{r}$, $M^{(2)}=\mathcal{O}((k\mp c_{r})^{-1/4})$.
\end{itemize}
\end{RHP}

\begin{figure}[H]
\begin{center}
\tikzset{every picture/.style={line width=0.75pt}} 
\begin{tikzpicture}[x=0.75pt,y=0.75pt,yscale=-1,xscale=1]
\draw    (213,140.5) -- (405,140.5) ;
\draw [shift={(315,140.5)}, rotate = 180] [color={rgb, 255:red, 0; green, 0; blue, 0 }  ][line width=0.75]    (10.93,-3.29) .. controls (6.95,-1.4) and (3.31,-0.3) .. (0,0) .. controls (3.31,0.3) and (6.95,1.4) .. (10.93,3.29)   ;
\draw    (162,178) -- (479,178.5) ;
\draw [shift={(326.5,178.26)}, rotate = 180.09] [color={rgb, 255:red, 0; green, 0; blue, 0 }  ][line width=0.75]    (10.93,-3.29) .. controls (6.95,-1.4) and (3.31,-0.3) .. (0,0) .. controls (3.31,0.3) and (6.95,1.4) .. (10.93,3.29)   ;
\draw    (161,97) -- (478,97.5) ;
\draw [shift={(325.5,97.26)}, rotate = 180.09] [color={rgb, 255:red, 0; green, 0; blue, 0 }  ][line width=0.75]    (10.93,-3.29) .. controls (6.95,-1.4) and (3.31,-0.3) .. (0,0) .. controls (3.31,0.3) and (6.95,1.4) .. (10.93,3.29)   ;
\draw    (157,140.5) -- (213,140.5) ;
\draw  (405,140.5) -- (476,140.5) ;
\draw (202.18,144.13) node [anchor=north west][inner sep=0.75pt]  [font=\tiny,rotate=-1.56]  {$-c_{r}$};
\draw (400,144.4) node [anchor=north west][inner sep=0.75pt]  [font=\tiny]  {$c_{r}$};

 \fill (212,141) circle (1.2pt);

 \fill (406,141) circle (1.2pt);
\draw (319,75) node [anchor=north west][inner sep=0.75pt]  [font=\tiny]  {$i\sqrt{3\xi+\frac{c_r^2}{2}}$};
\draw (322.5,181.65) node [anchor=north west][inner sep=0.75pt]  [font=\tiny]  {$-i\sqrt{3\xi+\frac{c_r^2}{2}}$};
\draw (486,91.4) node [anchor=north west][inner sep=0.75pt]  [font=\tiny]  {$\Gamma _{1}$};
\draw (483,168.4) node [anchor=north west][inner sep=0.75pt]  [font=\tiny]  {$\Gamma _{1}^{*}$};
\draw (159,107.4) node [anchor=north west][inner sep=0.75pt]  [font=\tiny]  {$U_{1}$};
\draw (162,148.4) node [anchor=north west][inner sep=0.75pt]  [font=\tiny]  {$U_{1}^{*}$};
\draw (482,135.4) node [anchor=north west][inner sep=0.75pt]  [font=\scriptsize]  {$\mathbb{R}$};
\end{tikzpicture}
    \caption{The jump contours of RH problem for $M^{(2)}$ when $\xi\in\mathcal{R}_{\textup{\uppercase\expandafter{\romannumeral3}}_b}$.}\label{fig:deformtosaddle}
\end{center}
\end{figure}

As $t\rightarrow\infty$, it follows from item (c) of Proposition \ref{analytic extension of r RIII_b} that
\begin{equation}
    M^{(2)}\rightarrow \Delta_{r}(k), \quad \vert V^{(2)}(k)-I \vert=\mathcal{O}\left(t^{-2}\right).
\end{equation}
Using the reconstruction formula \eqref{equ:recovering formula}, we have
\begin{equation*}
    q(x,t)=c_{r}+\mathcal{O}(t^{-2}),
\end{equation*}
which is the part ($\textup{\uppercase\expandafter{\romannumeral3}}_b$) of Theorem \ref{thm:mainthm}.

\section*{Acknowledgments}
The authors thank the anonymous reviewers for their careful review and useful suggestions, which improved the manuscript.
\paragraph{\bf  Funding Declaration.} T.-Y. Xu is partially supported by China Postdoctoral Science Foundation under grant
number 2024M760480 and Shanghai Post-Doctoral Excellence Program under grant number
2024100.

\paragraph{\bf Conflict of Interest.} The author has no conflict of interest to declare that are relevant to the content of this
article

\appendix	
\renewcommand\thefigure{\Alph{section}\arabic{figure}}
\setcounter{figure}{0}
\renewcommand{\theequation}{\thesection.\arabic{equation}}
\setcounter{equation}{0}
\section{Parabolic cylinder parametrix}\label{appendix:pc model}
    Denote by $\kappa$ a complex parameter. Find a $2 \times 2$ matrix-valued function $M^{(\textup{PC})}(\zeta)$ satisfying the following RH problem:
    \begin{RHP}\label{RHP: pc parametrix}
    \hfill
    \begin{itemize}
    \item $M^{(\textup{PC})}(\zeta)$ is holomorphic for $\zeta\in\mathbb{C}\backslash \Gamma^{(\textup{PC})}$,
    where $\Gamma^{(\textup{PC})}:=\left\{\mathbb{R}e^{\pm \frac{\pi}{4}i}\right\}\cup\left\{\mathbb{R}e^{\pm \frac{3\pi i}{4}}\right\}$;
    see Figure \ref{fig:pc jump contours} for an illustration.
    \item For $\zeta\in\Gamma^{(\textup{PC})}$, we have
    \begin{equation*}
    M^{(\textup{PC})}_{+}(\zeta)=M^{(\textup{PC})}_{-}(\zeta)V^{(\textup{PC})}(\zeta),
    \end{equation*}
    where
    \begin{align*}
    V^{(\textup{PC})}(\zeta)=\left\{\begin{array}{ll}
    \zeta^{i\nu \hat{\sigma}_3}e^{-\frac{i\zeta^2}{4}\hat{\sigma}_3}\left(\begin{array}{cc}
    1 & 0\\
    \kappa & 1
    \end{array}\right),  & \zeta\in\mathbb{R}^+e^{i\pi/4},\\[10pt]
    \zeta^{i\nu \hat{\sigma}_3}e^{-\frac{i\zeta^2}{4}\hat{\sigma}_3}\left(\begin{array}{cc}
    1 & -\kappa^*\\
    0 & 1
    \end{array}\right),   & \zeta\in \mathbb{R}^+e^{-i\pi/4},\\[10pt]
    \zeta^{i\nu \hat{\sigma}_3}e^{-\frac{i\zeta^2}{4}\hat{\sigma}_3}\left(\begin{array}{cc}
    1& 0\\
    \frac{\kappa}{1-\kappa\kappa^*}&1
    \end{array}\right),   & \zeta\in \mathbb{R}^+e^{-i3\pi/4},\\[10pt]
    \zeta^{i\nu \hat{\sigma}_3}e^{-\frac{i\zeta^2}{4}\hat{\sigma}_3}\left(\begin{array}{cc}
    1 & -\frac{\kappa^*}{1-\kappa\kappa^*}\\
    0 & 1
    \end{array}\right),   & \zeta\in \mathbb{R}^+e^{i3\pi/4},
    \end{array}\right.
    \end{align*}
    and $\nu=\nu(\eta)$ with some parameter $\eta$.
    \item As $\zeta \rightarrow \infty$ in $\mathbb{C}\backslash \Gamma^{(\textup{PC})}$, we have $M^{(\textup{PC})}(\zeta)= I+M^{(\textup{PC})}_{1}\zeta^{-1}+\mathcal{O}(\zeta^{-2})$.
    \end{itemize}
    \end{RHP}

\begin{figure}[htbp]
        \centering
\tikzset{every picture/.style={line width=0.75pt}} 
\begin{tikzpicture}[x=0.75pt,y=0.75pt,yscale=-1,xscale=1]
\draw    (300,160) -- (387,80) ;
\draw [shift={(347.92,115.94)}, rotate = 137.4] [color={rgb, 255:red, 0; green, 0; blue, 0 }  ][line width=0.75]    (10.93,-3.29) .. controls (6.95,-1.4) and (3.31,-0.3) .. (0,0) .. controls (3.31,0.3) and (6.95,1.4) .. (10.93,3.29)   ;
\draw    (300,160) -- (219,81) ;
\draw [shift={(264.51,125.39)}, rotate = 224.28] [color={rgb, 255:red, 0; green, 0; blue, 0 }  ][line width=0.75]    (10.93,-3.29) .. controls (6.95,-1.4) and (3.31,-0.3) .. (0,0) .. controls (3.31,0.3) and (6.95,1.4) .. (10.93,3.29)   ;
\draw    (381,239) -- (300,160) ;
\draw [shift={(345.51,204.39)}, rotate = 224.28] [color={rgb, 255:red, 0; green, 0; blue, 0 }  ][line width=0.75]    (10.93,-3.29) .. controls (6.95,-1.4) and (3.31,-0.3) .. (0,0) .. controls (3.31,0.3) and (6.95,1.4) .. (10.93,3.29)   ;
\draw    (213,240) -- (300,160) ;
\draw [shift={(260.92,195.94)}, rotate = 137.4] [color={rgb, 255:red, 0; green, 0; blue, 0 }  ][line width=0.75]    (10.93,-3.29) .. controls (6.95,-1.4) and (3.31,-0.3) .. (0,0) .. controls (3.31,0.3) and (6.95,1.4) .. (10.93,3.29)   ;

\draw (295,163.4) node [anchor=north west][inner sep=0.75pt]  [font=\small]  {$0$};
\draw (393,74.4) node [anchor=north west][inner sep=0.75pt]  [font=\small]  {$\mathbb{R}^{+} e{^{\pi i/4}}$};
\draw (389,232.4) node [anchor=north west][inner sep=0.75pt]  [font=\small]  {$\mathbb{R}^{+} e{^{-\pi i/4}}$};
\draw (193,57.4) node [anchor=north west][inner sep=0.75pt]  [font=\small]  {$\mathbb{R}^{+} e{^{3\pi i/4}}$};
\draw (185,241.4) node [anchor=north west][inner sep=0.75pt]  [font=\small]  {$\mathbb{R}^{+} e{^{-3\pi i/4}}$};
\end{tikzpicture}
\caption{The jump contours of RH problem for $M^{(\textup{PC})}$.}
\label{fig:pc jump contours}
\end{figure}

The RH problem \ref{RHP: pc parametrix} admits a unique solution, whose explicit form can be found in \cite{DZAnn}.
Particularly, as $\zeta\to\infty$, we have 
\begin{equation}\label{mpcxieta0}
        M^{(\textup{PC})}=I+\frac{1}{\zeta}
            \left(\begin{array}{cc}
            0 & -i\beta^{(\eta)}\\
            i\overline{\beta^{(\eta)}} & 0
            \end{array}\right)+\mathcal{O}(\zeta^{-2}),
    \end{equation}
    where
    \begin{equation}\label{beta}
        \beta^{(\eta)}=\frac{\sqrt{2\pi}e^{\frac{i\pi}{4}}e^{-\frac{\pi \nu(\eta)}{2}}}{\kappa\Gamma(-i\nu(\eta))},\quad  |\beta^{(\eta)}|=\sqrt{\nu(\eta)} .
    \end{equation}

\section{Airy parametrix}\label{appendix:Airy model}
Fix the four rays on the complex plane
\begin{align*}
    &\Gamma^{(\textnormal{Ai})}_1=\left\{\zeta\in\mathbb{C}\vert \textnormal{arg}\zeta=2\pi/3\right\}, \quad \Gamma^{(\textnormal{Ai})}_2=\left\{\zeta\in\mathbb{C}\vert \arg\zeta=\pi\right\}, \\
    &\Gamma^{(\textnormal{Ai})}_3=\left\{\zeta\in\mathbb{C}\vert \textnormal{arg}\zeta=-2\pi/3\right\}, \quad \Gamma^{(\textnormal{Ai})}_4=\left\{\zeta\in\mathbb{C}\vert \arg\zeta=0\right\},
\end{align*}
all oriented from left-to-right. We define the sectors
\begin{align*}
    S_{1}=\left\{\zeta\in\mathbb{C}\vert \arg\zeta\in(0,2\pi/3)\right\}, \quad S_{2}=\left\{\zeta\in\mathbb{C}\vert \arg\zeta\in(2\pi/3,\pi)\right\}.
\end{align*}
Then $S_3$, $S_4$ are the conjugative sectors of $S_2$, $S_1$ respectively; see Figure \ref{AiryRays}
for an illustration.

\begin{figure}[htbp]
\begin{center}
\tikzset{every picture/.style={line width=0.75pt}} 
\begin{tikzpicture}[x=0.75pt,y=0.75pt,yscale=-1,xscale=1]
\draw    (216,155) -- (311,155) ;
\draw [shift={(269.5,155)}, rotate = 180] [color={rgb, 255:red, 0; green, 0; blue, 0 }  ][line width=0.75]    (10.93,-3.29) .. controls (6.95,-1.4) and (3.31,-0.3) .. (0,0) .. controls (3.31,0.3) and (6.95,1.4) .. (10.93,3.29)   ;
\draw    (220,218) -- (311,155) ;
\draw [shift={(270.43,183.08)}, rotate = 145.3] [color={rgb, 255:red, 0; green, 0; blue, 0 }  ][line width=0.75]    (10.93,-3.29) .. controls (6.95,-1.4) and (3.31,-0.3) .. (0,0) .. controls (3.31,0.3) and (6.95,1.4) .. (10.93,3.29)   ;
\draw    (311,155) -- (424,155) ;
\draw [shift={(373.5,155)}, rotate = 180] [color={rgb, 255:red, 0; green, 0; blue, 0 }  ][line width=0.75]    (10.93,-3.29) .. controls (6.95,-1.4) and (3.31,-0.3) .. (0,0) .. controls (3.31,0.3) and (6.95,1.4) .. (10.93,3.29)   ;
\draw    (219,93) -- (311,155) ;
\draw [shift={(269.98,127.35)}, rotate = 213.98] [color={rgb, 255:red, 0; green, 0; blue, 0 }  ][line width=0.75]    (10.93,-3.29) .. controls (6.95,-1.4) and (3.31,-0.3) .. (0,0) .. controls (3.31,0.3) and (6.95,1.4) .. (10.93,3.29)   ;
\draw (308,160.4) node [anchor=north west][inner sep=0.75pt]  [font=\scriptsize]  {$0$};
\draw (389,162.4) node [anchor=north west][inner sep=0.75pt]  [font=\scriptsize]  {$\Gamma _{4}^{(\textnormal{Ai})}$};
\draw (250,78.4) node [anchor=north west][inner sep=0.75pt]  [font=\scriptsize]  {$\Gamma _{1}^{(\textnormal{Ai})}$};
\draw (186,146.4) node [anchor=north west][inner sep=0.75pt]  [font=\scriptsize]  {$\Gamma _{2}^{(\textnormal{Ai})}$};
\draw (250,202.4) node [anchor=north west][inner sep=0.75pt]  [font=\scriptsize]  {$\Gamma _{3}^{(\textnormal{Ai})}$};
\draw (344,102.4) node [anchor=north west][inner sep=0.75pt]  [font=\scriptsize]  {$S_{1}$};
\draw (217,115.4) node [anchor=north west][inner sep=0.75pt]  [font=\scriptsize]  {$S_{2}$};
\draw (220,177.4) node [anchor=north west][inner sep=0.75pt]  [font=\scriptsize]  {$S_{3}$};
\draw (350,191.4) node [anchor=north west][inner sep=0.75pt]  [font=\scriptsize]  {$S_{4}$};
\end{tikzpicture}
\end{center}
\caption{The jump contours of the model Airy RH problem.}\label{AiryRays}
\end{figure}
Find a $2\times 2$ matrix-valued function $\Psi^{(\textnormal{Ai})}$ be the solution
of the following RH problem:
\begin{RHP}
    \hfill
\begin{itemize}
    \item $\Psi^{(\textnormal{Ai})}(\zeta)$ is holomorphic for $\zeta\in\mathbb{C}\backslash\Gamma^{(\textnormal{Ai})}$,where $\Gamma^{(\textnormal{Ai})}=\cup_{j=1}^{4}\Gamma^{(\textnormal{Ai})}_j$.
    \item For $\zeta\in\Gamma^{(\textnormal{Ai})}$, we have
    \begin{align*}
        \Psi^{(\textnormal{Ai})}_{+}(\zeta)=\Psi^{(\textnormal{Ai})}_{-}(\zeta)V^{(\textnormal{Ai})}(\zeta),
    \end{align*}
    where
    \begin{equation*}
        V^{(\textnormal{Ai})}(\zeta)=
            \begin{cases}
            \begin{pmatrix} 1 & 0\\ e^{\frac{4}{3}\zeta^\frac{3}{2}} & 1\end{pmatrix}, &\zeta\in\Gamma^{(\textnormal{Ai})}_1\cup\Gamma^{(\textnormal{Ai})}_3,  \\
            \begin{pmatrix} 0 & 1 \\ -1 & 0\end{pmatrix},  &\zeta\in\Gamma^{(\textnormal{Ai})}_2, \\
            \begin{pmatrix} 1 & e^{-\frac{4}{3}\zeta^\frac{3}{2}}\\ 0  & 1\end{pmatrix}, &\zeta\in\Gamma^{(\textnormal{Ai})}_4.
            \end{cases}
    \end{equation*}
    \item As $\zeta\to\infty$ in $\mathbb{C}\setminus\Gamma^{(\rm Ai)}$, we have
\begin{equation}\label{asymptotics for Airy model}
    \Psi^{\mathrm{(Ai)}}(\zeta) =\zeta^{-\frac{\sigma_3}{4}} N\left(I+\sum_{j=1}^{\infty} \frac{\Psi_j^{\mathrm{(Ai)}}}{\zeta^{3 j / 2}}\right), \quad \zeta \rightarrow \infty,
\end{equation}
where
\begin{equation}\label{airy N}
    N=\frac{1}{\sqrt{2}}\left(\begin{array}{cc}
1 & i \\
i & 1
\end{array}\right), \quad 
\Psi_j^{\mathrm{(Ai)}}=-\frac{6^{-2 j}\left(j+\frac{1}{2}\right)_{2 j}}{(6 j-1) j!}\left(\begin{array}{cc}
(-1)^j & -6 i j \\
(-1)^j 6 i j & 1
\end{array}\right), \quad j \in \mathbb{N},
\end{equation}
where the Pochhammer symbol $(a)_j$ is defined by
\begin{equation*}
   (a)_j=\frac{\Gamma(a+j)}{\Gamma(a)}=a(a+1)(a+2) \cdots(a+j-1) . 
\end{equation*}
\end{itemize}
\end{RHP}
The explicit form of $\Psi^{(\textnormal{Ai})}$ is given by \cite{DeiKriMcVenaZhou}
\begin{equation*}
    \Psi^{(\textnormal{Ai})}(\zeta)=
            \begin{cases}
            \mathcal{A}(\zeta)e^{\frac{2}{3}\zeta^{\frac{3}{2}}\sigma_3}, &\zeta\in S_1, \\
            \mathcal{A}(\zeta)\begin{pmatrix} 1 & 0\\ -1 & 1\end{pmatrix}e^{\frac{2}{3}\zeta^{\frac{3}{2}}\sigma_3}, &\zeta\in S_2, \\
            \mathcal{A}(\zeta)\begin{pmatrix} 1 & 0\\  1 & 1\end{pmatrix}e^{\frac{2}{3}\zeta^{\frac{3}{2}}\sigma_3}, &\zeta\in S_3,\\
            \mathcal{A}(\zeta)e^{\frac{2}{3}\zeta^{\frac{3}{2}}\sigma_3}, &\zeta\in S_4,
            \end{cases}
\end{equation*}
where
\begin{equation*}
    \mathcal{A}(\zeta)=
        \begin{cases}
        \begin{pmatrix} \textnormal{Ai}(\zeta) & \textnormal{Ai}(\omega^2\zeta)\\ \textnormal{Ai}'(\zeta) & \omega^2\textnormal{Ai}'(\omega^2\zeta) \end{pmatrix}e^{-\frac{i\pi}{6}\sigma_3}, \quad \zeta\in\mathbb{C}^{+}, \\
        \begin{pmatrix} \textnormal{Ai}(\zeta) & -\omega^2\textnormal{Ai}(\omega\zeta)\\ \textnormal{Ai}'(\zeta) & -\textnormal{Ai}'(\omega\zeta) \end{pmatrix}e^{-\frac{i\pi}{6}\sigma_3}, \quad \zeta\in\mathbb{C}^{-},
        \end{cases}
\end{equation*}
with $\omega=e^{2\pi i/3}$.

\section{Bessel parametrix}\label{appendix:bessel model}
Fix the three rays on the complex plane
\begin{align}
    \Gamma^{(\textnormal{Be})}_1=\left\{\zeta\in\mathbb{C}\vert \textnormal{arg}\zeta=2\pi/3\right\}, \quad \Gamma^{(\textnormal{Be})}_2=\left\{\zeta\in\mathbb{C}\vert \arg\zeta=\pi\right\}, \nonumber \quad
    \Gamma^{(\textnormal{Be})}_3=\left\{\zeta\in\mathbb{C}\vert \textnormal{arg}\zeta=-2\pi/3\right\},
\end{align}
all oriented from left-to-right. We define the sectors
\begin{align*}
    S_{1}=\left\{\zeta\in\mathbb{C}\vert \arg\zeta\in(-2\pi/3,2\pi/3)\right\}, \quad S_{2}=\left\{\zeta\in\mathbb{C}\vert \arg\zeta\in(2\pi/3,\pi)\right\},
\end{align*}
and $S_3$ is the conjugative sector of $S_2$; see Figure \ref{besselRays}
for an illustration. 

\begin{figure}[H]
\begin{center}
\tikzset{every picture/.style={line width=0.75pt}} 
\begin{tikzpicture}[x=0.75pt,y=0.75pt,yscale=-1,xscale=1]
\draw    (216,155) -- (311,155) ;
\draw [shift={(269.5,155)}, rotate = 180] [color={rgb, 255:red, 0; green, 0; blue, 0 }  ][line width=0.75]    (10.93,-3.29) .. controls (6.95,-1.4) and (3.31,-0.3) .. (0,0) .. controls (3.31,0.3) and (6.95,1.4) .. (10.93,3.29)   ;
\draw    (220,218) -- (311,155) ;
\draw [shift={(270.43,183.08)}, rotate = 145.3] [color={rgb, 255:red, 0; green, 0; blue, 0 }  ][line width=0.75]    (10.93,-3.29) .. controls (6.95,-1.4) and (3.31,-0.3) .. (0,0) .. controls (3.31,0.3) and (6.95,1.4) .. (10.93,3.29)   ;

\draw    (219,93) -- (311,155) ;
\draw [shift={(269.98,127.35)}, rotate = 213.98] [color={rgb, 255:red, 0; green, 0; blue, 0 }  ][line width=0.75]    (10.93,-3.29) .. controls (6.95,-1.4) and (3.31,-0.3) .. (0,0) .. controls (3.31,0.3) and (6.95,1.4) .. (10.93,3.29)   ;
\draw (308,160.4) node [anchor=north west][inner sep=0.75pt]  [font=\scriptsize]  {$0$};

\draw (250,78.4) node [anchor=north west][inner sep=0.75pt]  [font=\scriptsize]  {$\Gamma _{1}^{(\textnormal{Be})}$};
\draw (186,146.4) node [anchor=north west][inner sep=0.75pt]  [font=\scriptsize]  {$\Gamma _{2}^{(\textnormal{Be})}$};
\draw (250,202.4) node [anchor=north west][inner sep=0.75pt]  [font=\scriptsize]  {$\Gamma _{3}^{(\textnormal{Be})}$};
\draw (344,146.4) node [anchor=north west][inner sep=0.75pt]  [font=\scriptsize]  {$S_{1}$};
\draw (217,115.4) node [anchor=north west][inner sep=0.75pt]  [font=\scriptsize]  {$S_{2}$};
\draw (220,177.4) node [anchor=north west][inner sep=0.75pt]  [font=\scriptsize]  {$S_{3}$};

\end{tikzpicture}
\end{center}
\caption{The jump contours of the model Bessel RH problem.}\label{besselRays}
\end{figure}
Let $\alpha\in\mathbb{R}$ and $\mu=4\alpha^2$, we can find a $2\times 2$ matrix-valued function $\Psi^{(\textnormal{Be})}_{\alpha}$ be the solution
of the following RH problem:
\begin{RHP}
    \hfill
\begin{itemize}
    \item $\Psi^{(\textnormal{Be})}_{\alpha}(\zeta)$ is holomorphic for $\zeta\in\mathbb{C}\backslash\Gamma^{(\textnormal{Be})}$,where $\Gamma^{(\textnormal{Be})}=\cup_{j=1}^{3}\Gamma^{(\textnormal{Be})}_j$.
    \item For $\zeta\in\Gamma^{(\textnormal{Be})}$, we have
    \begin{align*}
        \Psi^{(\textnormal{Be})}_{\alpha,+}(\zeta)=\Psi^{(\textnormal{Be})}_{\alpha,-}(\zeta)V^{(\textnormal{Be})}(\zeta),
    \end{align*}
    where
    \begin{equation*}
        V^{(\textnormal{Be})}(\zeta)=
            \begin{cases}
            \begin{pmatrix} 1 & 0\\ e^{\alpha\pi i}e^{-4\zeta^\frac{1}{2}} & 1\end{pmatrix}, 
            &\zeta\in\Gamma^{(\textnormal{Be})}_1,  \\
             \begin{pmatrix} 1 & 0\\ e^{-\alpha\pi i}e^{-4 \zeta^\frac{1}{2}} & 1\end{pmatrix}, 
            &\zeta\in\Gamma^{(\textnormal{Be})}_3,\\
            \begin{pmatrix} 0 & 1 \\ -1 & 0\end{pmatrix},  &\zeta\in\Gamma^{(\textnormal{Be})}_2.
            \end{cases}
    \end{equation*}
\end{itemize}
\end{RHP}
The explicit form of  $\Psi^{(\textnormal{Be})}_{\alpha}$ could be found in \cite{bessel}. 
Here, we care about the large-$\zeta$ behavior of $\Psi^{(\textnormal{Be})}_{\alpha}$.
As $\zeta\to\infty$ on $S_1$, it follows that
\begin{equation}\label{bessel-asy}
    \Psi^{(\textnormal{Be})}_{\alpha}(\zeta)=\left(2\pi\zeta^{\frac{1}{2}}\right)^{-\frac{\sigma_3}{2}}N\left(I+\frac{\Psi^{(\textnormal{Be})}_{\alpha,\hspace*{0.1em} 1}}{\zeta^{\frac{1}{2}}}+\frac{\Psi^{(\textnormal{Be})}_{\alpha,\hspace*{0.1em} 2}}{\zeta}+\frac{\Psi^{(\textnormal{Be})}_{\alpha, \hspace*{0.1em} 3}}{\zeta^{\frac{3}{2}}}+\mathcal{O}\left(\zeta^{-2}\right)\right),
\end{equation}
where
\begin{equation}\label{bessel asy}
    \begin{aligned}
    & N=\frac{1}{\sqrt{2}}\left(\begin{array}{cc}
1 & i \\
i & 1
\end{array}\right),\quad \Psi^{(\textnormal{Be})}_{\alpha,\hspace*{0.1em} 1}=\frac{1}{16}\begin{pmatrix}
    -\mu-1&-2i\\-2i&\mu+1
\end{pmatrix},\\
&\Psi^{(\textnormal{Be})}_{\alpha,\hspace*{0.1em} 2}=\frac{\mu-1}{512}\begin{pmatrix}
    \mu+3&-12i\\12i&\mu+3
\end{pmatrix},\quad \Psi^{(\textnormal{Be})}_{\alpha,\hspace*{0.1em} 3}=\frac{(\mu-1)(\mu-9)}{24576}\begin{pmatrix}
    -\mu-5&-30i\\-30i&\mu+5
\end{pmatrix}.
\end{aligned}
\end{equation}

\medskip
\noindent


\end{document}